\documentclass[10pt,letterpaper]{article}
\usepackage[T1]{fontenc}
\usepackage[utf8]{inputenc}
\usepackage{lmodern}

\usepackage{cite}

\usepackage{amsmath}
\usepackage{amsfonts}
\usepackage{amssymb}
\usepackage{graphicx}
\usepackage{mathrsfs}
\usepackage{upref,amsthm,amsxtra,exscale}
\usepackage{dsfont}
% ref packages
%\usepackage{nameref}
% folowing  must be in this order
%\usepackage{varioref}
\usepackage[colorlinks=true,urlcolor=blue,
citecolor=red,linkcolor=blue,linktocpage,pdfpagelabels,
bookmarksnumbered,bookmarksopen]{hyperref}
\usepackage[cm]{fullpage}
\usepackage[dvipsnames]{xcolor}
% 1 as a characteristic function
%\usepackage{bbm}

%\usepackage[dvipsnames]{xcolor}

\newtheorem{theorem}{Theorem}[section]
\newtheorem{corollary}[theorem]{Corollary}
\newtheorem{remark}[theorem]{Remark}

\newtheorem{lemma}[theorem]{Lemma}
\newtheorem{proposition}[theorem]{Proposition}

\newtheorem{question}[theorem]{Question}

\numberwithin{equation}{section}

%ambiente para tablas
\usepackage{booktabs}
\usepackage{multirow} 

% Mónica's def
\def\r{\mathbb{R}}
\def\rn{\mathbb{R}^N}

\def\s1{\mathbb{S}^1}
\def\n{\mathbb{N}}

\def\eps{\varepsilon}
\def\rh{\rightharpoonup}
\def\io{\int_{\Omega}}
\def\irn{\int_{\r^N}}
\def\vp{\varphi}

\def\vr{\varrho}
\def\o{\Omega}
\def\t{\Theta}
\def\bf{\boldsymbol}

\def\cC{C}

\def\cN{\mathcal{N}}

\def\cS{\mathcal{S}}

\def\bar{\overline}
\def\what{\widehat}
\def\d{\,\mathrm{d}}

\def\i{\mathrm{i}}

\def\ssm{\smallsetminus}

\def\ex{2^{\ast}}

\definecolor{DarkGreen}{rgb}{0.0, 0.5, 0.0}

  %for comments
  %for content
  %for comments
  %for content
  %for comments

\author{Mónica Clapp, Cristian Morales-Encinos,\ Alberto Saldaña\footnote{A. Saldaña is supported by SECIHTI grant CBF2023-2024-116 (Mexico) and by UNAM-DGAPA-PAPIIT grant IN102925 (Mexico).}, and Mayra Soares.}
\title{On the asymptotically linear problem for an elliptic equation with an indefinite nonlinearity}
\date{\today}

\begin{document}

\maketitle

\begin{abstract}

We study the semilinear elliptic problem
\[
-\Delta u = Q_\Omega |u|^{p-2}u \quad \text{in } \mathbb{R}^N,
\]
where $Q_\Omega = \chi_\Omega - \chi_{\mathbb{R}^N \setminus \Omega}$ for a bounded smooth domain $\Omega \subset \mathbb{R}^N$, $N \geq 3$, and $1 < p < 2^*$. This equation arises in the study of optical waveguides and exhibits an indefinite nonlinearity due to the sign-changing weight $Q_\Omega$. We prove that, for $p > 2$ sufficiently close to $2$, positive solutions are nondegenerate and the problem admits a unique least energy solution. Our approach combines a detailed analysis of an associated eigenvalue problem involving $Q_\Omega$ with variational methods and blow-up techniques in the asymptotically linear regime. We also provide a comprehensive study of the spectral properties of the corresponding linear problem, including the existence and qualitative behavior of eigenfunctions, sharp decay estimates, and symmetry results. In particular, we establish analogues of the Faber–Krahn and Hong–Krahn–Szegö inequalities in this setting.

\medskip

\noindent \textbf{Mathematics Subject Classification:} 
35J61	%Semilinear elliptic equations
35A02   %Uniqueness problems for PDEs: global uniqueness, local uniqueness, non-uniqueness
(primary) 
35J20   %Variational methods for second-order elliptic equations
35B09   %Positive solutions to PDEs
35B40 %Asymptotic behavior of solutions to PDEs
(secondary).

\medskip

\noindent \textbf{Keywords:} 
Uniqueness and nondegeneracy,
self-focusing core, 
eigenvalues and eigenfunctions,
Faber-Krahn inequality,
Asymptotic decay,
Symmetry and symmetry breaking
\end{abstract}

\section{Introduction}
Let $\o$ be a bounded smooth open subset of $\rn$ (not necessarily connected), $N\geq 3$, and let 
\begin{equation}\label{Q}
Q_\o(x):=
\begin{cases}
1 & \text{if \ }x\in\o, \\
-1& \text{if \ }x\in\rn\smallsetminus\o.
\end{cases}
\end{equation}	

In this paper we study the uniqueness of minimizers and nondegeneracy of positive solutions to the following semilinear problem
\begin{align}\label{main}
\begin{cases}
    -\Delta u=Q_\o |u|^{p-2}u,\\
    u \in X^p := D^{1,2}(\rn)\cap L^{p}(\rn),
\end{cases}
\end{align}
where $1<p<2^{*},$ $2^{*}=\frac{2N}{N-2}$ is the critical Sobolev exponent, and $D^{1,2}(\rn)=:\{u\in L^{2^*}(\rn)\;:\;\nabla u\in L^2(\rn)\}$. We endow $X^p$ with the norm
\begin{align*}
   \|u\|_{X^p}:=(\|u\|^2+|u|_p^2)^\frac{1}{2}, \qquad\text{where \ }\|u\|:=\left(\irn |\nabla u|^2\right)^\frac{1}{2}\quad \text{ and } \quad |u|_p:=\left(\irn |u|^p\right)^\frac{1}{p}.
\end{align*}

Problem \eqref{main} arises in connection with some models of optical waveguides propagating through a stratified dielectric medium. For a more detailed explanation of the physical relevance of the model, we refer to 
\cite{stuart1991self,stuart1993guidance,ackermann2013concentration,fang2020limiting,clapp2025schrodinger} and the references therein.

It is known that uniqueness does not hold in general for positive solutions to \eqref{main}. In particular, in \cite{clapp2024multiple} the authors show that there exists $\eps>0$ such that, if $p\in(2^*-\eps,2^*)$, then multiple positive solutions to \eqref{main} can be constructed using a Lyapunov-Schmidt procedure, whenever $\Omega$ has nontrivial topology (if it is an annulus, for instance), or if $\Omega$ has a special geometry (\emph{e.g.} if it is a dumbbell type domain), or if $\Omega$ is disconnected (the union of two disjoint balls, for example). The critical case $p=2^*$ is studied in \cite{cfs25}, where Lusternik-Schnirelmann theory is used to obtain multiplicity of positive solutions when a sufficiently thin tubular neighborhood of a manifold with rich topology is removed from $\Omega$.

In this paper, we show that the picture is completely different when considering powers $p$ far from the critical exponent and sufficiently close to 2. This is the first result regarding the uniqueness of minimizers (i.e. least energy solutions) and the nondegeneracy of positive solutions to \eqref{main}.

\begin{theorem}\label{exit p small}
Let $\Omega$ be a bounded smooth open subset of $\rn$ and $N\geq 3$. There exists $p_0>2$ such that the problem \eqref{main} has a unique positive least energy solution for $p\in (2,p_0)$. Moreover, all positive solutions of \eqref{main} are non-degenerate for $p\in (2,p_0)$.
\end{theorem}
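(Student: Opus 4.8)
The plan is to show that, as $p\downarrow 2$, problem \eqref{main} is a regular perturbation of the limiting eigenvalue problem $-\Delta\varphi=\lambda Q_\o\varphi$, and that all positive solutions bifurcate from its principal eigenpair $(\lambda_1,\varphi_1)$ studied in the linear part of the paper. Recall from that analysis that $\lambda_1>0$ is simple, that $\varphi_1>0$ on $\rn$ with $\int_{\rn}Q_\o\varphi_1^2>0$ and with the established uniform decay, that $\ker(-\Delta-\lambda_1 Q_\o)=\r\varphi_1$, and that $\lambda_1$ is isolated, so $-\Delta-\lambda_1 Q_\o$ is invertible on the $Q_\o$-orthogonal complement of $\varphi_1$; moreover a nonnegative eigenfunction exists only for $\lambda=\lambda_1$. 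On the variational side, a Nehari/fibering computation gives $c_p=\frac{p-2}{2p}\,S_p^{\,p/(p-2)}$, where $S_p:=\inf\{\|u\|^2: u\in X^p,\ \int_{\rn}Q_\o|u|^p=1\}$, the least energy solution is $u_p=S_p^{1/(p-2)}\hat u_p$ for a nonnegative minimizer $\hat u_p$ of $S_p$, and $S_p\to S_2=\lambda_1$ as $p\downarrow 2$.

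The key step, and the one I expect to be the main obstacle, is an a priori description of positive solutions: after the scaling $w_p:=u_p/\|u_p\|$, every positive solution $u_p$ of \eqref{main} satisfies $w_p\to\varphi_1/\|\varphi_1\|$ in $D^{1,2}(\rn)$, in $C^1_{loc}$ and with uniform decay, and $\|u_p\|^{p-2}\to\lambda_1$; equivalently $u_p=\mu_p(\varphi_1+o(1))$ with $\mu_p^{p-2}\to\lambda_1$. For \emph{least energy} solutions this is soft: $\hat u_p$ is bounded in $D^{1,2}$, its weak limit $\hat u_0$ cannot vanish because $1=\int Q_\o|\hat u_p|^p\le\int_\o|\hat u_p|^p\to\int_\o\hat u_0^2$; Rellich on $\o$ controls the positive part of $\int Q_\o|\hat u_p|^p$ while Fatou controls the negative part, so $\|\hat u_0\|^2/\int Q_\o\hat u_0^2\le\lambda_1$, forcing $\hat u_0$ to be the positive $S_2$-minimizer $\varphi_1/(\int Q_\o\varphi_1^2)^{1/2}$ and the convergence to be strong. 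For a \emph{general} positive solution one argues by contradiction through a blow-up analysis: $w_n\ge0$, $\|w_n\|=1$, $-\Delta w_n=\tau_n Q_\o w_n^{p_n-1}$ with $\tau_n=(\int Q_\o w_n^{p_n})^{-1}\ge c_0>0$; a rescaling around a maximum point of $w_n$, together with the Liouville fact that $-\Delta W=W\ge0$ with $W$ bounded has only the trivial solution on $\rn$ and on a half-space, rules out $\tau_n\to\infty$; subharmonicity of $w_n$ outside $\o$ combined with the sharp decay estimates precludes loss of mass at infinity; hence $w_n\to w_0$ strongly, $w_0\ge0$, $\|w_0\|=1$, $-\Delta w_0=\tau_\infty Q_\o w_0$, so $w_0$ is a nonnegative eigenfunction and therefore, by the linear theory, $w_0=\varphi_1/\|\varphi_1\|$ with $\tau_\infty=\lambda_1$, a contradiction.

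Next, local uniqueness and existence come from the implicit function theorem. Writing $\eps=p-2$ and working in a fixed weighted function space adapted to the uniform decay (so that the domain does not depend on $p$), parametrize a positive solution near $\varphi_1$ as $u=\mu(\varphi_1+\psi)$ with $\int_{\rn}Q_\o\varphi_1\psi=0$, and set $\nu=\mu^{\eps}$; using $-\Delta\varphi_1=\lambda_1 Q_\o\varphi_1$, equation \eqref{main} becomes
\[
F(\eps,\nu,\psi):=-\Delta\psi-Q_\o\big[\nu\,|\varphi_1+\psi|^{\eps}(\varphi_1+\psi)-\lambda_1\varphi_1\big]=0,
\]
with $F(0,\lambda_1,0)=0$ and $F$ of class $C^1$ near this point (the $\eps$-dependence of $t\mapsto|t|^{\eps}t$ is smooth because $\varphi_1>0$ with the known decay). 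The partial differential $D_{(\nu,\psi)}F(0,\lambda_1,0)[\dot\nu,\dot\psi]=-\Delta\dot\psi-\lambda_1 Q_\o\dot\psi-\dot\nu\,Q_\o\varphi_1$ is an isomorphism: given a right-hand side $g$, solvability of $-\Delta\dot\psi-\lambda_1 Q_\o\dot\psi=g+\dot\nu\,Q_\o\varphi_1$ on $\{\psi:\int Q_\o\varphi_1\psi=0\}$ forces $\dot\nu=-\langle g,\varphi_1\rangle/\int_{\rn}Q_\o\varphi_1^2$, which is well defined precisely because $\int Q_\o\varphi_1^2\ne0$, and then $\dot\psi$ is determined uniquely by the spectral gap. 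Thus for $|\eps|$ small there is a unique smooth branch $\eps\mapsto(\nu(\eps),\psi(\eps))$ near $(\lambda_1,0)$ with $\psi(\eps)\to0$, producing a genuine positive solution $u_p=\nu(\eps)^{1/\eps}(\varphi_1+\psi(\eps))$, and the local uniqueness in the IFT shows it is the only positive solution lying, after scaling, in a fixed neighbourhood of $\varphi_1$. Combined with the asymptotic step, for $p\in(2,p_0)$ there is exactly one positive solution, in particular a unique least energy solution.

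Finally, nondegeneracy. Let $u_p$ be a positive solution, $p\in(2,p_0)$, and suppose $v_p\in\ker L_{u_p}$ with $L_{u_p}:=-\Delta-(p-1)Q_\o u_p^{p-2}$ and $\|v_p\|=1$. Since $-\Delta u_p=Q_\o u_p^{p-1}$ one computes $L_{u_p}u_p=-(p-2)Q_\o u_p^{p-1}$, and symmetry of $L_{u_p}$ gives $0=\langle L_{u_p}v_p,u_p\rangle=\langle v_p,L_{u_p}u_p\rangle$, hence $\int_{\rn}Q_\o u_p^{p-1}v_p=0$. By the asymptotic step and elliptic estimates (using $\mu_p^{p-2}\to\lambda_1$, $p-1\to1$ and $\varphi_1^{p-2}\to1$ on compacta, with the decay estimates controlling the tails), along a subsequence $v_p\to v_0$ strongly with $-\Delta v_0=\lambda_1 Q_\o v_0$ and $\|v_0\|=1$, so $v_0=c\varphi_1$ with $c\ne0$; dividing the identity by $\mu_p^{p-1}$ and passing to the limit yields $c\int_{\rn}Q_\o\varphi_1^2=0$, contradicting $\int Q_\o\varphi_1^2>0$. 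Therefore $\ker L_{u_p}=\{0\}$, i.e. every positive solution is nondegenerate; alternatively this is immediate from the invertibility of $D_{(\nu,\psi)}F$ along the IFT branch. Taking $p_0$ to be the smallest of the thresholds produced above completes the proof.
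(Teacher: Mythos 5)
Your nondegeneracy argument is essentially the paper's: the identity $\int_{\rn}Q_\o u_p^{p-1}v_p=0$ obtained from symmetry of $L_{u_p}$, the a priori convergence $u_p/|u_p|_\infty\to\phi_1$ with $|u_p|_\infty^{p-2}\to\Lambda_1$ (proved by blow-up plus the Liouville-type nonexistence on $\rn$ and on half-spaces), and the uniform polynomial tail bound to justify dominated convergence, leading to $c\irn Q_\o\phi_1^2=0$, a contradiction. That half is correct, modulo a small care point: with the normalization $\|v_p\|=1$ in $D^{1,2}(\rn)$ the tail term $\int_{\rn\smallsetminus\o}u_p^{p-2}v_p^2$ is borderline ($u_p^{p-2}\sim|x|^{-2}$ and $v_p^2$ is only controlled in $L^{N/(N-2)}$), which is why the paper normalizes in $L^\infty$ and drops the negative part of $Q_\o$ before estimating.

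The uniqueness half is where there is a genuine gap. Your IFT scheme would prove uniqueness of \emph{all} positive solutions, which is strictly stronger than the statement and is in fact left open by the paper precisely because the classical arguments break down in the presence of $Q_\o$. The obstruction sits in your phrase ``working in a fixed weighted function space adapted to the uniform decay.'' No such space is exhibited, and the natural candidates fail. By Proposition \ref{decayz}, every positive solution of the nonlinear problem decays only like $|x|^{-2/(p-2)}$, whereas $\phi_1$ decays like $|x|^{-(N-1)/2}e^{-\sqrt{\Lambda_1}|x|}$; hence for each fixed $p>2$ the remainder $\psi=u_p/\mu_p-\phi_1$ \emph{dominates} $\phi_1$ at infinity and is not small in any exponentially weighted norm in which the $\eps$-derivative of $t\mapsto|t|^{\eps}t$ (which produces the factor $\ln|\phi_1+\psi|\sim -\sqrt{\Lambda_1}|x|$) is controlled. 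Conversely, in unweighted $L^2$-based spaces the $\eps$-derivative $|\phi_1+\psi|^{1+\eps}\ln|\phi_1+\psi|$ loses integrability at infinity for general $\psi$, so joint $C^1$-dependence of $F$ on $(\eps,\nu,\psi)$ is not established; and the available a priori information (convergence in $C^{1,\alpha}_{loc}$ plus a uniform bound $u_p/|u_p|_\infty\le C|x|^{-2/(q-2)}$ for one fixed $q$) does not place $u_p/\mu_p$ in a shrinking neighbourhood of $\phi_1$ in whatever norm the IFT would require. Without that, the IFT local uniqueness cannot be invoked to capture the actual solutions, and the claimed full uniqueness — and with it your deduction of uniqueness of the minimizer — does not follow. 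The paper instead proves uniqueness of minimizers directly: two distinct minimizers $v_n,\hat v_n\in\cS_{\o,p_n}$ both converge to the \emph{same} multiple $c\phi_1$ (here the normalization $\irn Q_\o v_n^{p_n}=1$ is essential to pin down the constant, something unavailable for general positive solutions), the normalized difference $w_n=(v_n-\hat v_n)/|v_n-\hat v_n|_\infty$ converges to $\phi_1$, and the Green-type identity $0=\irn Q_\o\,\alpha_n v_n\hat v_n\big(v_n^{p_n-2}-\hat v_n^{p_n-2}\big)$ passes to the limit to give $c'\irn Q_\o\phi_1^2>0$, a contradiction. To repair your proof you would need either to restrict the IFT argument honestly (and then still prove separately that minimizers land on the branch), or to adopt an identity-based comparison of two minimizers as above.
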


The exact notion of nondegeneracy used in Theorem~\ref{exit p small} can be found in Section~\ref{u:sec} and relates to the fact that the linearized problem at a positive solution has only the trivial solution. The definition of least energy solution can be found in Subsection~\ref{sec:minimizers}

Theorem  \ref{exit p small} holds for general bounded smooth sets $\Omega$. This contrasts with the case when the exponent $p$ is far from 2, where symmetries on $\Omega$ can be used to obtain multiplicity results. To illustrate this point, we show a symmetry breaking result for least energy solutions in the slightly subcritical regime, namely, if $p$ is close enough to the Sobolev critical exponent $2^*=\frac{2N}{N-1}$. By using rotations, this yields the existence of infinitely many different least energy solutions. Recall that foliated Schwarz symmetric functions are axially symmetric functions with some monotonicity properties in the polar angle, we refer to Section \ref{eigen:sec} for a precise definition. 

\begin{theorem}\label{thm:mult}
Let $\Omega := \{x\in \rn\::\: \frac{1}{2}<|x|<1\}$ and let $p_k\subset(2,\ex)$ be a sequence such that $p_k\to 2^*$.  Then, passing to a subsequence, \eqref{main} with $p=p_k$ has a nonradial foliated Schwarz least energy solution. 
\end{theorem}

We include in Remark \ref{rmk:table} a table that highlights some differences among positive solutions of \eqref{main} as the exponent \(p\) varies. In particular, we examine the regimes where \(p\) approaches the critical exponent, where \(p\) tends to \(2\) from above and from below, and we also discuss the sublinear range \(p \in (1,2)\), which has been recently analyzed in \cite{CSS26}.

The idea behind the proof of Theorem~\ref{exit p small} is that, in the asymptotically linear case (namely, as $p\to 2$), the solutions inherit the properties of the linear problem ($p=2$), where typically one expects a well-behaved situation.  Hence, to prove Theorem~\ref{exit p small} we need two ingredients: a good knowledge of the linear problem and a good control of the solutions of \eqref{main} as $p$ tends to 2. 

To understand the linear problem, let us consider the eigenvalue problem
\begin{align}\label{P_lambdaI}
\begin{cases}
-\Delta u = \Lambda Q_\o  u,\\
u \in H^{1}(\rn), \quad  u\neq 0.
\end{cases}
\end{align}
A solution to this problem is a pair $(\Lambda, u)\in\r\times H^1(\rn)$ that satisfies \eqref{P_lambdaI} weakly, i.e., $u\neq 0$ and
\begin{align*}
\irn\nabla u\cdot\nabla\vp=\Lambda\irn Q_\o u\vp\qquad\text{for all \ }\vp\in\cC^\infty_c(\rn).    
\end{align*}
This implies, in particular, that
\begin{align*}
    0<\irn|\nabla u|^2=\Lambda\irn Q_\o u^2.
\end{align*}
Therefore, $\Lambda \neq 0$, and $\int_{\mathbb{R}^N} Q_\Omega u^2 > 0$ if and only if $\Lambda > 0$. The existence of solutions with $\Lambda > 0$ was previously established in \cite{sw} in a more general framework. However, to the best of our knowledge, a detailed qualitative analysis of these solutions has not been carried out before. In this paper, we provide an ad hoc proof of the existence of solutions to \eqref{P_lambdaI} and develop new results concerning their structure, including regularity, decay, nodal properties, and symmetry.

To state our main existence result for \eqref{P_lambdaI}, consider the set 
\begin{align}\label{ac}
\cS_\o:=\Big\{v\in D^{1,2}(\rn):\irn Q_\o v^2=1\Big\}.    
\end{align}

The following result shows that the linear problem \eqref{P_lambdaI} resembles the case of the Dirichlet Laplacian in bounded domains. 

\begin{theorem} \label{thm:existence of eigenvalues}
There is a set $\{\phi_k\in\cS_\o:k\in\n\}$ of functions that are orthogonal in $D^{1,2}(\rn)$ such that
$$\|\phi_k\|^2=\inf_{v\in V_k\cap\cS_\o}\|v\|^2=:\Lambda_k,$$
where $V_1:=D^{1,2}(\rn)$ and $V_k$ is the orthogonal complement in $D^{1,2}(\rn)$ of the space generated by the eigenfunctions $\{\phi_1,\ldots,\phi_{k-1}\}$ if $k\geq 2$. Furthermore, 
\begin{itemize}
\item[$(i)$]the pair $(\Lambda_k,\phi_k)$ solves \eqref{P_lambdaI}, 
\item[$(ii)$] $\phi_k\in W^{2,s}_{loc}(\rn)\cap\cC^{1,\alpha}_{loc}(\rn)$ for all $s\in[1,\infty)$, $\alpha\in(0,1)$ and $k\in\n$,
\item[$(iii)$] $|\phi_1|>0$ in $\rn$,
\item[$(iv)$] $0<\Lambda_1<\Lambda_2\leq\cdots\leq\Lambda_k\leq\cdots$ and $\Lambda_k\to\infty$. Moreover, every eigenspace
$$E_k:=\{v\in H^1(\rn):(\Lambda_k,v)\text{ solves }\eqref{P_lambdaI}\}\cup\{0\}$$
has finite dimension and $\dim(E_1)=1$.
\end{itemize}
\end{theorem}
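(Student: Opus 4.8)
The plan is to construct the eigenvalues and eigenfunctions via an iterative constrained minimization scheme on $\cS_\o$, exactly as in the Courant–Fischer approach for the Dirichlet Laplacian, but using $D^{1,2}(\rn)$ as the ambient Hilbert space and $\irn Q_\o v^2 = 1$ as the constraint. First I would show that $\Lambda_1 = \inf_{v \in \cS_\o}\|v\|^2$ is attained and positive. The key technical point here is a compactness statement: if $v_n \rh v$ weakly in $D^{1,2}(\rn)$ with $\|v_n\|$ bounded, then $\irn Q_\o v_n^2 \to \irn Q_\o v^2$. This follows because $Q_\o v_n^2 = \chi_\o v_n^2 - \chi_{\rn\ssm\o} v_n^2$, and on the bounded set $\o$ the embedding $D^{1,2}(\rn) \hookrightarrow L^2(\o)$ is compact (Rellich), so $\irn \chi_\o v_n^2 \to \irn \chi_\o v^2$; meanwhile, once we know the minimizing sequence can be taken with $v_n \ge 0$ or more carefully controlled, the negative part $\irn \chi_{\rn\ssm\o} v_n^2$ is handled by weak lower semicontinuity together with the constraint. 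Concretely: $\|v_n\| \to \Lambda_1$ and $\irn Q_\o v_n^2 = 1$ force, after passing to a subsequence, $v_n \rh v$, and since $1 = \irn\chi_\o v_n^2 - \irn\chi_{\rn\ssm\o}v_n^2 \le \irn\chi_\o v_n^2 \to \irn\chi_\o v^2$, we get $\irn\chi_\o v^2 \ge 1 > 0$, hence $v \neq 0$; then $\irn Q_\o v^2 = \lim\irn\chi_\o v_n^2 - \liminf \irn\chi_{\rn\ssm\o}v_n^2 \ge 1$ while $\|v\|^2 \le \liminf\|v_n\|^2 = \Lambda_1$, so after rescaling $v/(\irn Q_\o v^2)^{1/2}$ we contradict minimality unless $\irn Q_\o v^2 = 1$ and $\|v\|^2 = \Lambda_1$. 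Positivity of $\Lambda_1$ is immediate since $\|v\|^2 = 0$ would force $v = 0$. The Lagrange multiplier rule on the manifold $\cS_\o$ (which is a smooth submanifold of $D^{1,2}(\rn)$ away from $0$, since the constraint functional $v \mapsto \irn Q_\o v^2$ has nonvanishing derivative $2\irn Q_\o v\,\cdot$ at any point of $\cS_\o$) then gives $(\Lambda_1, \phi_1)$ solving \eqref{P_lambdaI}. A standard bootstrap in $W^{2,s}_{loc}$ and Schauder estimates, using that $Q_\o \phi_k \in L^\infty_{loc}$, yields the regularity in $(ii)$. For $(iii)$, I would replace $\phi_1$ by $|\phi_1|$ (which is also a minimizer since $\|\,|\phi_1|\,\| = \|\phi_1\|$ and the constraint is unchanged), deduce $|\phi_1|$ solves the equation and is $\cC^{1,\alpha}_{loc}$, hence a nonnegative weak supersolution of $-\Delta w + c(x) w = 0$ with $c \in L^\infty_{loc}$ (take $c = \chi_{\rn\ssm\o}\Lambda_1$, so that $-\Delta|\phi_1| = \Lambda_1\chi_\o|\phi_1| - \Lambda_1\chi_{\rn\ssm\o}|\phi_1| \ge -\Lambda_1\chi_{\rn\ssm\o}|\phi_1|$), and apply the strong maximum principle / Harnack inequality to conclude $|\phi_1| > 0$ everywhere; simplicity $\dim E_1 = 1$ then follows by the usual argument (if $(\Lambda_1,\psi)$ solves \eqref{P_lambdaI} with $\psi$ linearly independent from $\phi_1$, a suitable linear combination changes sign, contradicting that it too would be a positive minimizer).

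For the higher eigenvalues, I would proceed inductively: having produced $\phi_1,\dots,\phi_{k-1}$ pairwise $D^{1,2}$-orthogonal, set $V_k$ to be their orthogonal complement and minimize $\|v\|^2$ over $V_k \cap \cS_\o$. The first thing to check is that this set is nonempty — equivalently, that $V_k$ contains a function $v$ with $\irn Q_\o v^2 > 0$. This is where one uses that $Q_\o$ is $+1$ on the open set $\o$: the space $\cC^\infty_c(\o) \subset D^{1,2}(\rn)$ is infinite-dimensional and every nonzero $v \in \cC^\infty_c(\o)$ satisfies $\irn Q_\o v^2 = \io v^2 > 0$; since $\cC^\infty_c(\o)$ is infinite-dimensional and the orthogonality conditions defining $V_k$ are finitely many, $V_k \cap \cC^\infty_c(\o) \neq \{0\}$, so $V_k \cap \cS_\o \neq \emptyset$. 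The same weak-compactness argument as for $k=1$ applies verbatim — weak limits stay in $V_k$ since $V_k$ is weakly closed, and the constraint passes to the limit — giving a minimizer $\phi_k$ with $\|\phi_k\|^2 = \Lambda_k$. The Lagrange multiplier computation now produces, a priori, $-\Delta\phi_k = \Lambda_k Q_\o\phi_k + \sum_{j<k}\mu_j(-\Delta\phi_j)$ for some reals $\mu_j$; testing against $\phi_i$ for $i < k$ and using $\langle \phi_k,\phi_i\rangle = 0$, $\langle\phi_j,\phi_i\rangle = \Lambda_i\delta_{ij}$ (after normalizing) together with the equation $-\Delta\phi_i = \Lambda_i Q_\o\phi_i$ to rewrite $\irn Q_\o\phi_k\phi_i = \Lambda_i^{-1}\langle\phi_k,\phi_i\rangle = 0$, I get $\mu_i\Lambda_i = 0$, hence all $\mu_j = 0$ and $(\Lambda_k,\phi_k)$ solves \eqref{P_lambdaI}. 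Monotonicity $\Lambda_k \le \Lambda_{k+1}$ is automatic because $V_{k+1} \subset V_k$. The regularity statement $(ii)$ is obtained for all $k$ by the same bootstrap.

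It remains to establish the finer spectral properties in $(iv)$. The strict inequality $\Lambda_1 < \Lambda_2$: if $\Lambda_1 = \Lambda_2$, then $\phi_2 \in V_2$ is $D^{1,2}$-orthogonal to $\phi_1$ yet is also a minimizer of $\|v\|^2/\irn Q_\o v^2$ over all of $D^{1,2}(\rn)$, hence by the argument in the $k=1$ case $|\phi_2|$ is also a minimizer solving \eqref{P_lambdaI}, so $|\phi_2| > 0$; but then $\irn Q_\o\phi_1\phi_2$ would be... — more cleanly, $\phi_1 > 0$ and $|\phi_2| > 0$ imply $\langle\phi_1, |\phi_2|\rangle = \Lambda_1\irn Q_\o\phi_1|\phi_2|$ need not vanish, yet one shows $\phi_2$ cannot be sign-definite (being orthogonal to the positive $\phi_1$ in $D^{1,2}$, i.e. $\irn\nabla\phi_1\cdot\nabla\phi_2 = 0$, which via the equation equals $\Lambda_1\io\phi_1\phi_2 - \Lambda_1\int_{\rn\ssm\o}\phi_1\phi_2$; a short argument using that $\phi_2$ also minimizes forces a contradiction). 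That each $E_k$ is finite-dimensional and that $\Lambda_k \to \infty$ I would deduce together: if some eigenspace were infinite-dimensional, or if the $\Lambda_k$ stayed bounded, one could extract infinitely many $D^{1,2}$-orthogonal eigenfunctions $\phi_{k_j}$ with $\|\phi_{k_j}\|^2 = \Lambda_{k_j} \le C$; normalized in $D^{1,2}$ they converge weakly to $0$, so by the compactness of $D^{1,2}(\rn) \hookrightarrow L^2(\o)$ we get $\io\phi_{k_j}^2/\|\phi_{k_j}\|^2 \to 0$, while from the equation $1 = \irn Q_\o\phi_{k_j}^2/\|\phi_{k_j}\|^2 \cdot \Lambda_{k_j}^{-1}\cdot\Lambda_{k_j}$... — precisely, $\|\phi_{k_j}\|^2 = \Lambda_{k_j}\irn Q_\o\phi_{k_j}^2 \le \Lambda_{k_j}\io\phi_{k_j}^2$, so $1 \le \Lambda_{k_j}\io(\phi_{k_j}/\|\phi_{k_j}\|)^2 \le C\cdot o(1)$, a contradiction. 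Finally $\dim E_1 = 1$ was handled above. The main obstacle I anticipate is the compactness input — verifying carefully that, despite the ambient space $D^{1,2}(\rn)$ being non-compactly embedded in $L^2(\rn)$ globally, the sign-changing weight $Q_\o$ and the Rellich embedding on the bounded piece $\o$ are enough to make all the constrained minimizations attain and to rule out accumulation of eigenvalues; the indefiniteness of $\irn Q_\o v^2$ is exactly what could a priori allow minimizing sequences to "escape to the negative region," and the fix is the observation above that the constraint value $1$ forces a uniform lower bound $\io v_n^2 \ge 1$, pinning mass inside $\o$ where compactness is available.
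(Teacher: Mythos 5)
Your proposal is correct and follows essentially the same route as the paper: iterated constrained minimization of $\|v\|^2$ over $V_k\cap\cS_\o$, with attainment obtained from the Rellich compactness on the bounded set $\o$ plus Fatou's lemma on $\rn\ssm\o$ and the rescaling trick to pin down $\irn Q_\o v^2=1$; the unboundedness of $(\Lambda_k)$ and finite-dimensionality of the eigenspaces are proved by exactly the paper's argument (orthonormalized eigenfunctions go weakly to $0$, so $1\leq \Lambda_{k_j}\io(\phi_{k_j}/\|\phi_{k_j}\|)^2\to 0$). Your explicit verification that $V_k\cap\cS_\o\neq\emptyset$ via $V_k\cap\cC^\infty_c(\o)\neq\{0\}$ is a point the paper leaves implicit, and is a welcome addition.

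The one place where your write-up visibly trails off is the strict inequality $\Lambda_1<\Lambda_2$. The pieces to close it are already in your text: if $\Lambda_1=\Lambda_2$, then $\phi_2$ is a global minimizer of the quotient, so by your part-$(iii)$ argument it is sign-definite and hence a $\Lambda_1$-eigenfunction; your simplicity argument (two linearly independent sign-definite eigenfunctions admit a nontrivial combination vanishing at a point, which is again an eigenfunction and hence sign-definite --- contradiction) then forces $\phi_2=c\phi_1$, contradicting $\langle\phi_1,\phi_2\rangle=0$. Note that one cannot instead conclude directly from $\langle\phi_1,\phi_2\rangle=\Lambda_1\irn Q_\o\phi_1\phi_2=0$ that the product changes sign, since $Q_\o$ is indefinite and the two positive contributions could cancel; the detour through simplicity is the safe way to finish.
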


Notice that Theorem~\ref{thm:existence of eigenvalues} does not state that the sequence $(\phi_k)$ generates $D^{1,2}(\rn)$. This is, in fact, \emph{false}, due to the admissibility condition $\int_{\rn}Q_\Omega \phi_k^2=1$ for all $k\in \mathbb N$, see Remark~\ref{no base}.  The main challenge in the proof of Theorem~\ref{thm:existence of eigenvalues} is to  control the inherent lack of compactness of equations in unbounded domains, where \eqref{ac} and the boundedness of the $\Omega$ play a crucial role (see \cite{CSS} a for related problem with $\Omega$ unbounded). 

Next, we explore how the semilinear problem \eqref{main} relates to the linear problem \eqref{P_lambdaI}. For this, we follow the strategy in \cite{st22} and introduce an auxiliary nonlinear eigenvalue problem, which is slightly easier to handle variationally. Let
\begin{align*}
\cS_{\Omega,p}:=\left\{v\in X^p: \irn Q_\o(x)|v|^p=1\right\},
\end{align*}
and consider the minimization problem
\begin{align}\label{alpha_1I}
	\alpha_p :=\inf_{v \in \cS_{\o,p}
    }\| v\|^2.
\end{align}

\begin{theorem}\label{Thm:asym}
    For $p\in (1,2^{*})\smallsetminus\{2\}$, there exists a positive function $v_p\in \cS_{\o,p}$ achieving the infimum in \eqref{alpha_1I}. Moreover, $u_p:=\alpha_p^\frac{1}{p-2}v_p$ is a positive solution to \eqref{main}. Furthermore,
    \begin{enumerate}
        \item  if $\Lambda_1=1$, then 
        \begin{align}
            u_p\to \left( e^{-\frac{1}{2}\irn Q_\o \phi_1^2\ln(\phi_1^2)}\right)\phi_1\in D^{1,2}(\rn)\quad\text{as}\;\;p\to2,
        \end{align}
        where $\phi_1$ is the positive first eigenfunction of \eqref{P_lambdaI} with $\irn Q_\o \phi_1^2 = 1$,
        \item if $\Lambda_1>1,$ then $\lim\limits_{p\nearrow 2}|u_p|_{\infty}=0$ and $\lim\limits_{p\searrow 2}u_p(x)=\infty$ for a.e. $x\in \rn,$ where $|\cdot|_{\infty}$ is the norm in $L^\infty(\rn)$, and
        \item if $\Lambda_1<1,$ then $\lim\limits_{p\nearrow 2}u_p(x)=\infty$ for a.e. $x\in \rn$ and $\lim\limits_{p\searrow 2}|u_p|_{\infty}=0.$ 
    \end{enumerate} 
\end{theorem}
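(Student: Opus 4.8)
The plan is to prove Theorem~\ref{Thm:asym} in four stages: attainment of $\alpha_p$, the Euler--Lagrange characterization giving a positive solution of \eqref{main}, the precise asymptotics as $p\to 2$ in the critical case $\Lambda_1=1$, and the dichotomy when $\Lambda_1\neq 1$.

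\textbf{Step 1: Attainment of the infimum.} First I would show $\alpha_p>0$. Since $\Omega$ is bounded, $Q_\Omega|v|^p\leq \chi_\Omega|v|^p$, so $\int_{\rn}Q_\Omega|v|^p\leq \int_\Omega |v|^p\leq |\Omega|^{1-p/2^*}|v|_{2^*}^p\leq C\|v\|^p$ by Sobolev; hence any $v\in\cS_{\Omega,p}$ satisfies $\|v\|^2\geq C^{-2/p}>0$. Next, take a minimizing sequence $(v_n)\subset\cS_{\Omega,p}$; it is bounded in $D^{1,2}(\rn)$, hence (up to a subsequence) $v_n\rh v$ in $D^{1,2}(\rn)$, $v_n\to v$ in $L^p_{loc}(\rn)$ and a.e. The key compactness point is that the constraint functional $v\mapsto\int_{\rn}Q_\Omega|v|^p=\int_\Omega|v|^p-\int_{\rn\ssm\Omega}|v|^p$ splits into a term supported on the bounded set $\Omega$, where $L^p_{loc}$ convergence gives $\int_\Omega|v_n|^p\to\int_\Omega|v|^p$, and a negative term for which Fatou/weak lower semicontinuity gives $\int_{\rn\ssm\Omega}|v|^p\leq\liminf\int_{\rn\ssm\Omega}|v_n|^p$; combining, $\int_{\rn}Q_\Omega|v|^p\geq\limsup\int_{\rn}Q_\Omega|v_n|^p=1$, so in particular $v\neq 0$. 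If $\int_{\rn}Q_\Omega|v|^p=:t\geq 1$, set $w:=t^{-1/p}v\in\cS_{\Omega,p}$; then $\alpha_p\leq\|w\|^2=t^{-2/p}\|v\|^2\leq t^{-2/p}\liminf\|v_n\|^2=t^{-2/p}\alpha_p$, forcing $t=1$ and $\|v\|^2=\alpha_p$. Replacing $v$ by $|v|$ (which does not increase $\|\cdot\|$ and preserves the constraint) we may take $v_p:=|v|\geq 0$; by the strong maximum principle applied to the equation below, $v_p>0$.

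\textbf{Step 2: Euler--Lagrange equation.} A Lagrange multiplier argument on the $C^1$ constraint $\cS_{\Omega,p}$ gives $-\Delta v_p=\mu\, Q_\Omega|v_p|^{p-2}v_p$ weakly; testing with $v_p$ yields $\mu\int_{\rn}Q_\Omega|v_p|^p=\|v_p\|^2=\alpha_p$, so $\mu=\alpha_p>0$. A direct computation then shows $u_p:=\alpha_p^{1/(p-2)}v_p$ solves $-\Delta u_p=Q_\Omega|u_p|^{p-2}u_p$, i.e. \eqref{main}; elliptic regularity (as in Theorem~\ref{thm:existence of eigenvalues}(ii)) and the strong maximum principle give $u_p>0$ in $\rn$.

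\textbf{Step 3: Asymptotics as $p\to 2$.} This is where the work concentrates and the main obstacle lies: relating $\alpha_p$ to $\Lambda_1$ and then controlling $v_p$. I expect the limit relation $\alpha_p\to\Lambda_1$ as $p\to 2$: the upper bound follows by using a suitable normalization of $\phi_1$ as a test function in \eqref{alpha_1I}, and the lower bound by showing that $(v_p)$ (or a rescaling) is bounded in $D^{1,2}(\rn)$, extracting a weak limit, and passing to the limit in the constraint and energy using the same bounded-support splitting as in Step~1 together with the uniform $L^p$--$L^2$ interpolation on $\Omega$. The delicate point is the normalization factor: $v_p$ satisfies $\int_{\rn}Q_\Omega|v_p|^p=1$ while the limiting eigenfunction satisfies $\int_{\rn}Q_\Omega\phi_1^2=1$, and the $L^p$ versus $L^2$ discrepancy produces the exponential constant $e^{-\frac12\int_{\rn}Q_\Omega\phi_1^2\ln(\phi_1^2)}$. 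To extract it, write $|v_p|^p=|v_p|^2\,e^{(p-2)\ln|v_p|}$, expand $e^{(p-2)\ln|v_p|}=1+(p-2)\ln|v_p|+o(p-2)$, and insert into $1=\int_{\rn}Q_\Omega|v_p|^p$; using $v_p\to c\,\phi_1$ for some constant $c>0$ (from $\alpha_p\to\Lambda_1=1$, simplicity of $\Lambda_1$, and strong convergence, which again rests on the bounded-support compactness), one gets $1=c^2\int_{\rn}Q_\Omega\phi_1^2+(p-2)\big[c^2\int_{\rn}Q_\Omega\phi_1^2\ln(c\phi_1)\big]+o(p-2)=c^2+(p-2)c^2(\ln c+\tfrac12\int_{\rn}Q_\Omega\phi_1^2\ln\phi_1^2)+o(p-2)$, which forces $c=1$ at leading order and then, matching the $O(p-2)$ term (equivalently, tracking the expansion of $\alpha_p^{1/(p-2)}$), identifies $c=e^{-\frac12\int_{\rn}Q_\Omega\phi_1^2\ln(\phi_1^2)}$; making this heuristic rigorous via uniform integrability of $\ln|v_p|$ against $Q_\Omega|v_p|^2$ on $\Omega$ is the technical heart. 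The strong convergence in $D^{1,2}(\rn)$ then follows from $\|v_p\|^2=\alpha_p\to\Lambda_1=\|c\phi_1\|^2$ together with weak convergence, and this yields conclusion (1).

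\textbf{Step 4: The dichotomy $\Lambda_1\neq 1$.} Here $u_p=\alpha_p^{1/(p-2)}v_p$ with $v_p$ still converging (after the analysis of Step~3, now with the constraint normalization giving a bounded nonzero limit $\wt v$) but $\alpha_p\to\Lambda_1\neq 1$. Thus the scalar prefactor $\alpha_p^{1/(p-2)}$ governs everything: if $\Lambda_1>1$ then $\alpha_p^{1/(p-2)}\to 1$ as $p\searrow 2$ but $\alpha_p^{1/(p-2)}\to 0$ as $p\nearrow 2$ — wait, more carefully, $\alpha_p^{1/(p-2)}=\exp\big(\tfrac{\ln\alpha_p}{p-2}\big)$, and since $\ln\alpha_p\to\ln\Lambda_1>0$ when $\Lambda_1>1$, we get $\alpha_p^{1/(p-2)}\to\infty$ as $p\searrow 2$ and $\to 0$ as $p\nearrow 2$; combined with $v_p\to\wt v$ (bounded, positive a.e. by the maximum principle, and with $|v_p|_\infty$ bounded away from $0$ and $\infty$ via uniform elliptic estimates), this gives $|u_p|_\infty\to 0$ as $p\nearrow 2$ and $u_p(x)\to\infty$ a.e. as $p\searrow 2$, which is exactly (2). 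Case $\Lambda_1<1$ is the mirror image, $\ln\Lambda_1<0$, yielding (3). The one point requiring care is the uniform (in $p$ near $2$) $L^\infty$ bound and positivity of $v_p$: this comes from Moser iteration / Brezis--Kato on the equation $-\Delta v_p=\alpha_p Q_\Omega|v_p|^{p-2}v_p$ using the uniform bound $\|v_p\|^2=\alpha_p\to\Lambda_1$ and the boundedness of $\Omega$, exactly as in the regularity part of Theorem~\ref{thm:existence of eigenvalues}.

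The principal obstacle throughout is compactness in the unbounded domain $\rn$: weak limits could a priori lose mass at infinity, but this is prevented by the sign structure of $Q_\Omega$ (the positive part of the constraint lives on the bounded set $\Omega$), and the extraction of the sharp exponential constant in conclusion~(1) is the most delicate computation, requiring a careful first-order expansion in $p-2$ with uniform control of $\ln|v_p|$.
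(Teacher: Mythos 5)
Your Steps 1, 2 and 4 follow essentially the paper's route: attainment of $\alpha_p$ via the splitting of the constraint into a compact part on $\Omega$ plus a Fatou estimate on $\rn\ssm\Omega$, the Euler--Lagrange identification of $u_p$, the limits $\alpha_p\to\Lambda_1$ and $v_p\to\phi_1$, and the observation that parts (2)--(3) are governed by the scalar $\alpha_p^{1/(p-2)}=\exp\bigl(\tfrac{\ln\alpha_p}{p-2}\bigr)$ once one knows that $(\alpha_p/\Lambda_1)^{1/(p-2)}$ converges to a positive constant. The genuine gap is in Step 3, the extraction of that constant. First, your bookkeeping is internally inconsistent: the expansion of the constraint forces $c=1$ (i.e.\ $v_p\to\phi_1$ with the $L^2$-normalization), so the same $c$ cannot also equal $e^{-\frac12\int Q_\Omega\phi_1^2\ln\phi_1^2}$; that constant is $\lim_p(\alpha_p/\Lambda_1)^{1/(p-2)}$, a statement about the \emph{rate} at which $\alpha_p\to\Lambda_1$, and the constraint $\int Q_\Omega|v_p|^p=1$ combined with $\|v_p\|^2\ge\Lambda_1\int Q_\Omega v_p^2$ yields only a one-sided first-order inequality for $\tfrac{\ln(\alpha_p/\Lambda_1)}{p-2}$; you would also need the matching first-order expansion of the test-function bound $\alpha_p\le\Lambda_1\bigl(\int Q_\Omega\phi_1^p\bigr)^{-2/p}$, which you never articulate.

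Second, and more seriously, your expansion requires $\int_{\rn}Q_\Omega|v_p|^2\ln|v_p|\to\int_{\rn}Q_\Omega\phi_1^2\ln\phi_1$, and the contribution from the unbounded region $\rn\ssm\Omega$ is not controlled by anything you have established. Where $v_p<1$ one has $|v_p^2\ln v_p|\le C_\eps v_p^{2-\eps}$, and a uniform bound in $D^{1,2}\cap L^p$ (hence in $L^q$ for $q\in[p,2^*]$) does \emph{not} bound $\int_{\rn\ssm\Omega}v_p^{2-\eps}$, since $2-\eps<p$; a tail like $|x|^{-\beta}$ with $\beta p>N>\beta(2-\eps)$ defeats the estimate. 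The paper sidesteps this entirely by deriving the constant not from the constraint but from the Euler--Lagrange equation tested against $\phi_1$, namely $\Lambda_1\int Q_\Omega\phi_1 v_p=\alpha_p\int Q_\Omega v_p^{p-1}\phi_1$; every integrand then carries the factor $\phi_1$, whose exponential decay (Theorem \ref{decay:thm}) together with the $L^{2^*}$ dominating function for the strongly convergent sequence $(v_p)$ makes dominated convergence available. Without either that device or uniform decay estimates for $v_p$ itself (which are proved in the paper only for $p>2$ and only later), your Step 3 does not close, and consequently neither does part (1) nor the convergence of $(\alpha_p/\Lambda_1)^{1/(p-2)}$ that parts (2)--(3) implicitly rely on.
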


Theorems~\ref{thm:existence of eigenvalues} and~\ref{Thm:asym} are the two main ingredients to prove Theorem~\ref{exit p small} together with suitable blow-up arguments inspired by  \cite{damascelli1999qualitative,dancer2003real,lin1994uniqueness,dis23}, where blow-up techniques have been used to show uniqueness (and nondegeneracy) of solutions in the asymptotically linear regime for the Laplacian and the fractional Laplacian in bounded domains. Nevertheless, important obstacles appear due to the sign-changing weight $Q_\Omega$, and the classical arguments need to be substantially adapted.  In particular, due to these complications, we were not able to show the uniqueness of \emph{all} positive solutions, but instead we only show the uniqueness of minimizers and leave the uniqueness of positive solutions as an open problem.

\begin{question} \label{q1}
Is it true that there exists $p_0>2$ such that the problem \eqref{main} has a unique positive solution for $p\in (2,p_0)$?
\end{question}
See also Remark \ref{obs:rmk} for some comments on Question \ref{q1}.

We also establish the following sharp decay estimates for solutions to \eqref{main}, whenever $p\in (2,p_s]$, where $p_s:=\frac{2N-2}{N-2}$ is sometimes called the Serrin exponent. 
\begin{proposition}\label{decayz}
Let $p\in(2,p_s)$ and let $w\in X^p$ be a positive solution to $-\Delta w = Q_\Omega w^{p-1}$ in $\r^N$. There is $C>1$ such that,
\begin{enumerate}
\item if $p\in(2,p_s)$,
\begin{align}\label{bds}
C^{-1}|x|^{-\frac{2}{p-2}}\leq w(x)\leq C|x|^{-\frac{2}{p-2}}\qquad \text{ for all }|x|>1;
\end{align}
\item if $p=p_s=\frac{2N-2}{N-2}$, 
\begin{align}\label{bds3}
C^{-1}\left(|x|\sqrt{\ln(|x|)}\right)^{2-N}\leq w(x)\leq C\left(|x|\sqrt{\ln(|x|)}\right)^{2-N}\qquad \text{ for all }|x|>1.
\end{align}
\end{enumerate}
\end{proposition}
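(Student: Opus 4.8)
The plan is to treat the two regimes separately and, in each, to sandwich $w$ between explicit radial super- and subsolutions of the equation outside a large ball, where $Q_\Omega \equiv -1$. First I would fix $R>0$ with $\overline\Omega\subset B_R$, so that $w$ solves the \emph{definite} equation $-\Delta w = -w^{p-1}\leq 0$ in $\rn\ssm B_R$; thus $w$ is subharmonic there and, being in $D^{1,2}(\rn)$, tends to $0$ at infinity, so by the maximum principle $0<w\leq \max_{\partial B_R}w=:M$ on $\rn\ssm B_R$. The upper bound will then come from a comparison function of the form $\bar w(x)=A|x|^{-\frac{2}{p-2}}$ (resp. $A(|x|\sqrt{\ln|x|})^{2-N}$ when $p=p_s$): a direct computation shows that for the exponent $\beta=\frac{2}{p-2}$ one has $-\Delta(|x|^{-\beta})=-\beta(\beta+2-N)|x|^{-\beta-2}$, and the algebraic identity $\beta+2 = (p-1)\beta$ (equivalently $\beta=\frac{2}{p-2}$) makes the power on the right-hand side match $|x|^{-(p-1)\beta}$ exactly; choosing $A$ large enough that $A\beta(N-2-\beta)\geq A^{p-1}$ (possible precisely when $\beta<N-2$, i.e. $p<p_s$) and $\bar w\geq M$ on $\partial B_{R}$ gives a supersolution dominating $w$ on $\rn\ssm B_R$. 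For the matching lower bound I would use $\underline w(x)=a|x|^{-\beta}$ with $a$ small; here the sign condition flips favorably and one needs $a\beta(N-2-\beta)\leq a^{p-1}$, true for small $a$, together with $\underline w\leq w$ on $\partial B_R$, which holds once $a$ is small since $\min_{\partial B_R}w>0$ by the strong maximum principle. Comparison on the exterior domain (valid because both sides vanish at infinity) then yields \eqref{bds}.

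For the critical Serrin case $p=p_s$, where $\beta=N-2$ and the naive power function solves the \emph{linear} equation, I would instead take the ansatz $g_\pm(x)=\big(|x|^2\ln|x|\big)^{\frac{2-N}{2}}$ suggested by the formal ODE $-(r^{N-1}g')' = r^{N-1}g^{p-1}$ with $g^{p-1}=g^{\frac{N}{N-2}}$; a careful expansion of the Laplacian of $r^{2-N}(\ln r)^{-\frac{N-2}{2}}$ shows the leading term is $\sim c\,r^{-N}(\ln r)^{-\frac{N}{2}}$, which is exactly of the order of $g^{p-1}$, so that $C_1 g \le w \le C_2 g$ outside $B_R$ follows again by choosing the constants in the super/subsolutions appropriately and invoking the comparison principle. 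The logarithmic correction is what distinguishes this borderline exponent, and getting the constant in the leading asymptotics of $\Delta\big(r^{2-N}(\ln r)^{-(N-2)/2}\big)$ right is the one genuinely delicate computation.

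The main obstacle I anticipate is \emph{not} the exterior comparison itself but ensuring that the barriers can be chosen uniformly and that the lower bound on $w$ near $\partial B_R$ is quantitatively available: one must know $w>0$ on all of $\rn$ (so that $\min_{\partial B_R}w>0$), which uses that $w$ is a positive solution together with the strong maximum principle applied on $B_R$ where $-\Delta w = w^{p-1}\geq 0$, and unique continuation to rule out $w$ vanishing on an open exterior set. A secondary technical point is justifying the comparison principle on the unbounded domain $\rn\ssm B_R$ for the operator $-\Delta + w^{p-2}(\cdot)$ with the $L^\infty$ decaying zero-order coefficient $w^{p-2}\to 0$; this is standard once one knows $w\to 0$, via a Phragmén–Lindelöf type argument or by exhausting with annuli $B_{R'}\ssm B_R$ and letting $R'\to\infty$, using that both $w$ and the barriers vanish at infinity so the boundary term on $\partial B_{R'}$ is controlled. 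Once these are in place, the matching of exponents is forced by the scaling $\beta=\frac{2}{p-2}$, and the proof reduces to the two elementary supersolution/subsolution verifications sketched above.
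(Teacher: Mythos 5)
Your overall strategy is exactly the paper's: outside a ball containing $\overline\Omega$ the equation becomes $-\Delta w=-w^{p-1}$, and one sandwiches $w$ between multiples of an explicit radial profile ($|x|^{-2/(p-2)}$ below the Serrin exponent, $(|x|\sqrt{\ln|x|})^{2-N}$ at it) via the maximum principle on the exterior domain, using decay at infinity exactly as you describe; the paper even uses the same difference-quotient trick to turn $w^{p-1}-(Cv)^{p-1}$ into a nonnegative zero-order coefficient.

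However, your verification of the barrier inequalities contains a sign error that, as written, makes the construction impossible. For $p\in(2,p_s)$ one has $\beta=\tfrac{2}{p-2}>N-2$ (the inequality $\beta<N-2$ is equivalent to $p>p_s$, not $p<p_s$), so $-\Delta\bigl(A|x|^{-\beta}\bigr)=A\beta(N-2-\beta)|x|^{-\beta-2}$ is \emph{negative}. The supersolution condition for $-\Delta u=-u^{p-1}$ is $-\Delta\bar w+\bar w^{p-1}\ge 0$, i.e.\ $A^{p-2}\ge\beta(\beta+2-N)$, which indeed holds for $A$ large; your written condition $A\beta(N-2-\beta)\ge A^{p-1}$ has a negative left-hand side and a positive right-hand side and is never satisfied (it is the supersolution condition for the nonlinearity with the opposite sign). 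Symmetrically, the correct subsolution condition is $a^{p-2}\le\beta(\beta+2-N)$, valid for small $a$; the inequality you wrote is vacuously true for every $a$ and carries no information. Your barriers are precisely the paper's exact exterior solution $C_p|x|^{-2/(p-2)}$, with $C_p^{p-2}=\beta(\beta+2-N)$, scaled by constants above and below $C_p$. Once the signs are fixed the argument goes through; in the critical case you must actually carry out the Laplacian expansion you defer, and there the sign of the subleading logarithmic term forces two different normalizing constants for the upper and lower barriers (the paper's $k_N$ and $K_N$), which is the only additional subtlety.
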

These sharp estimates answer an open question in \cite[Problem 1.6]{CHS25}. 

\medskip
 
We complement our results with some further qualitative properties of the eigenfunctions. We obtain information on their nodal regions, sharp decay, symmetries, Faber-Krahn and Hong–Krahn–Szegö type inequalities, and other results.  To be more precise, in the following we use $\phi_k$ and $\Lambda_k$ to denote the $k$-th eigenfunction and eigenvalue given by Theorem~\ref{thm:existence of eigenvalues}.  In the next result we write explicitly the domain dependency of the eigenvalues, namely, $\Lambda_k=\Lambda_k(\Omega)$ is the $k$-th eigenvalue associated to \eqref{P_lambdaI}. We also use $\Omega^*$ to denote the Schwarz symmetrization of a bounded open set $\Omega$, namely, the unique ball centered at the origin with the same measure as $\Omega$.

\begin{theorem}\label{theonethm}
Let $\o\subset \rn$ be an open bounded set. 
\begin{enumerate}
    \item (Courant's result) For every $k\geq 2$, $\phi_k$ changes sign and has at most $k$ nodal domains.  
    \item (Sharp decay) For each $k\in \mathbb N$ there is $C>0$ such that 
\begin{align*}
|\phi_k|\leq C |x|^{-\frac{N-1}{2}}e^{-\sqrt{\Lambda_k}|x|} \qquad \text{for all \ }|x|\geq 1.
\end{align*}
This estimate is sharp, in the sense that there is $c>0$ such that 
\begin{align*}
|\phi_1|>c |x|^{-\frac{N-1}{2}}e^{-\sqrt{\Lambda_1}|x|}\qquad \text{for all \ }|x|\geq 1. 
\end{align*}
\item (Faber-Krahn inequality) 
\begin{align*}
\Lambda_1(\Omega)\geq\Lambda_1(\Omega^{*}).    
\end{align*}
\item (Hong–Krahn–Szegö inequality) \begin{align}\label{ine}
        \Lambda_2(\o)>\Lambda_1(2^{-N}\o^*).
    \end{align} 
    This bound is sharp, in the sense that, for every $c>0$,
    \begin{align*}
         \inf\{\Lambda_2(U)\,:\,U\subset\rn\;\text{open bounded such that }|U|=c\,\}=\Lambda_1(B_r(0)),\qquad r:=\left(\frac{c}{2|B_1(0)|}\right)^\frac{1}{N},
     \end{align*}
     and a minimizing sequence is given by $\o_n:=B_r(-ne_1)\cup B_r(ne_1)$ for $n\in \mathbb N,$ where $e_1:=(1,0,\ldots,0)\in \rn$.   
     \item (Symmetries) If $\Omega$ is a radially symmetric open bounded set, then $\phi_1$ is radially symmetric and $\phi_2$ is foliated Schwarz symmetric.
\end{enumerate}
\end{theorem}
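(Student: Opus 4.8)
\emph{General approach.} I would establish the five assertions independently, in each case adapting the classical argument for the Dirichlet Laplacian and substituting the indefinite–weight analogues of $L^2$–orthogonality and of the variational characterization of eigenvalues; the two recurring tools are strong unique continuation for $-\Delta+\Lambda_k Q_\Omega$ (licit since $Q_\Omega\in L^\infty$) and the exponential decay of part~(2), which in turn only needs $\phi_k\in L^2$ and the fact that $\phi_k$ solves $-\Delta\phi_k+\Lambda_k\phi_k=0$ outside the compact set $\overline\Omega$. For part~(1): if $\phi_k\ge 0$, writing $-\Delta\phi_k+\Lambda_k\chi_{\mathbb{R}^N\setminus\Omega}\phi_k=\Lambda_k\chi_\Omega\phi_k\ge 0$ the strong maximum principle gives $\phi_k>0$, and then Picone's inequality---applied with a cutoff, the boundary terms being killed by $\phi_1,\phi_k\in L^2$---with the positive solution $\phi_k$ and test function $\phi_1$ yields $\Lambda_1=\|\phi_1\|^2\ge\Lambda_k\int_{\mathbb{R}^N}Q_\Omega\phi_1^2=\Lambda_k$, contradicting $\Lambda_k\ge\Lambda_2>\Lambda_1$; hence $\phi_k$ changes sign. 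For the nodal bound I follow Courant: if $\phi_k$ had nodal domains $D_1,\dots,D_{k+1},\dots$, put $u_i:=\phi_k\chi_{D_i}\in D^{1,2}(\mathbb{R}^N)$, so that testing the equation on $D_i$ gives $\|u_i\|^2=\Lambda_k\int_{D_i}Q_\Omega\phi_k^2>0$; choose a nontrivial $w=\sum_{i=1}^{k}c_iu_i$ with $\int_{\mathbb{R}^N}Q_\Omega\phi_j w=0$ for $j=1,\dots,k-1$ ($k-1$ linear conditions on $k$ unknowns). Then $w\in V_k$, $\int_{\mathbb{R}^N}Q_\Omega w^2=\sum c_i^2\int_{D_i}Q_\Omega\phi_k^2>0$ and $\|w\|^2/\int_{\mathbb{R}^N}Q_\Omega w^2=\Lambda_k$, so the normalization of $w$ attains the infimum defining $\Lambda_k$ and (Lagrange multipliers) solves \eqref{P_lambdaI}; but $w\equiv 0$ on the nonempty open set $D_{k+1}$, contradicting unique continuation.

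For part~(2): fix $R_0$ with $\Omega\subset B_{R_0}$. On $\{|x|>R_0\}$ the function $\phi_k$ is smooth, solves $-\Delta\phi_k+\Lambda_k\phi_k=0$, and tends to $0$ at infinity by interior estimates. A direct computation shows that for $N\ge 3$ the function $U(x):=|x|^{-(N-1)/2}e^{-\sqrt{\Lambda_k}|x|}$ satisfies $\Delta U\le\Lambda_k U$ on $\mathbb{R}^N\setminus\{0\}$, hence is a positive supersolution of $-\Delta\,\cdot+\Lambda_k\,\cdot=0$; by Kato's inequality $|\phi_k|$ is a subsolution of that operator on $\{|x|>R_0\}$; choosing $C$ with $|\phi_k|\le CU$ on $\{|x|=R_0\}$ and using that both vanish at infinity, the maximum principle on the exterior domain gives $|\phi_k|\le CU$ for $|x|\ge R_0$, hence for $|x|\ge 1$ after enlarging $C$ on the compact annulus. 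For the sharpness I compare $\phi_1>0$ on $\{|x|>R_0\}$ with the exact radial solution $\Phi(x):=|x|^{-(N-2)/2}K_{(N-2)/2}\!\big(\sqrt{\Lambda_1}|x|\big)>0$ of $-\Delta\Phi+\Lambda_1\Phi=0$: since $\phi_1\ge c\,\Phi$ on $\{|x|=R_0\}$ and both solve the equation in the exterior and vanish at infinity, $\phi_1\ge c\,\Phi$ there, and $K_{(N-2)/2}(t)\sim\sqrt{\pi/2t}\,e^{-t}$ yields the claimed lower bound.

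For parts~(3)--(4): Faber--Krahn is pure rearrangement---for $v\in D^{1,2}(\mathbb{R}^N)$ with $\int_{\mathbb{R}^N}Q_\Omega v^2>0$, its Schwarz symmetrization $v^*$ has $\|v^*\|^2\le\|v\|^2$ (P\'olya--Szeg\H{o}), $\int_{\mathbb{R}^N}(v^*)^2=\int_{\mathbb{R}^N}v^2$, and $\int_{\Omega^*}(v^*)^2\ge\int_\Omega v^2$ (Hardy--Littlewood), whence $\int_{\mathbb{R}^N}Q_{\Omega^*}(v^*)^2\ge\int_{\mathbb{R}^N}Q_\Omega v^2$; taking infima gives $\Lambda_1(\Omega)\ge\Lambda_1(\Omega^*)$. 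For Hong--Krahn--Szeg\H{o}, by part~(1) $\phi_2$ has exactly two nodal domains $\Omega^\pm$; testing the equation on $\Omega^\pm$ and Picone's inequality there identify $\Lambda_2(\Omega)$ with the first eigenvalue of $(-\Delta,Q_\Omega)$ on $\Omega^+$ under homogeneous conditions, attained by $\phi_2|_{\Omega^+}$. Since each admissible function for that problem is also admissible, with the same Rayleigh quotient, for the problem defining $\Lambda_1(\Omega^+\cap\Omega)$, we get $\Lambda_2(\Omega)\ge\Lambda_1(\Omega^+\cap\Omega)$, and this is \emph{strict} because equality would make $\phi_2|_{\Omega^+}$ a global minimizer, hence a solution of the associated equation on all of $\mathbb{R}^N$ vanishing on the open set $\Omega^-$---impossible by unique continuation. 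As $\Omega^+\cap\Omega$ and $\Omega^-\cap\Omega$ are disjoint subsets of $\Omega$, one has measure $\le|\Omega|/2$; combining this with part~(3), the monotonicity $A\subset A'\Rightarrow\Lambda_1(A)\ge\Lambda_1(A')$, and the scaling $\Lambda_1(tA)=t^{-2}\Lambda_1(A)$, one obtains $\Lambda_2(\Omega)>\Lambda_1(B)$ for a ball $B$ determined by $|\Omega|$, in particular \eqref{ine}. For sharpness: $\Lambda_1(\Omega_n)\le\Lambda_1(B_r)$ by monotonicity, while taking suitably truncated translates of the first eigenfunction of $B_r$ as a disjointly supported admissible pair $v_1,v_2$ and using the elementary bound $\Lambda_2(U)\le\max_i\|v_i\|^2/\int_{\mathbb{R}^N}Q_U v_i^2$ (a suitable linear combination of $v_1,v_2$ lies in $V_2\cap\mathcal{S}_U$) gives $\Lambda_2(\Omega_n)\to\Lambda_1(B_r)$; together with the lower bound just proved this identifies $\inf\{\Lambda_2(U):|U|=c\}$.

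For part~(5): the Rayleigh quotient for $\Lambda_1$ is invariant under $v\mapsto v\circ R$, $R\in O(N)$, because $Q_\Omega$ is radial, so $\phi_1\circ R$ is again a positive minimizer and $\phi_1\circ R=\phi_1$ since $\dim E_1=1$---hence $\phi_1$ is radial. For $\phi_2$ I use polarizations: given a closed half-space $H$ with $0\in\partial H$, reflection $\sigma_H$, and polarization $\phi_2^H$, the facts that $Q_\Omega$ and $\phi_1$ are radial (so $\sigma_H$-symmetric) and that polarization merely reorders the pair $\{\phi_2(x),\phi_2(\sigma_H x)\}$---preserving its sum and its sum of squares---give $\int_{\mathbb{R}^N}Q_\Omega(\phi_2^H)^2=1$ and $\int_{\mathbb{R}^N}Q_\Omega\phi_1\phi_2^H=\int_{\mathbb{R}^N}Q_\Omega\phi_1\phi_2=0$, so $\phi_2^H\in V_2\cap\mathcal{S}_\Omega$ and $\|\phi_2^H\|^2\ge\Lambda_2$; combined with the polarization inequality $\|\phi_2^H\|^2\le\|\phi_2\|^2=\Lambda_2$ this forces equality, i.e.\ $\phi_2^H=\phi_2$ or $\phi_2^H=\phi_2\circ\sigma_H$, and as this holds for every such $H$, the Brock--Solynin characterization of foliated Schwarz symmetric functions concludes. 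The hardest step is (4): because the nodal domains of $\phi_2$ are subsets of $\mathbb{R}^N$ rather than of $\Omega$, and possibly unbounded, the classical Hong--Krahn--Szeg\H{o} scheme must be rerouted through the ``positive-weight part'' $\Omega^\pm\cap\Omega$, with extra care needed to upgrade the inequality to a strict one and to compute $\lim_n\Lambda_2(\Omega_n)$; a pervasive secondary difficulty is making the Picone identities and the exterior maximum principle rigorous on the unbounded domain, which is exactly where the decay estimates of part~(2) are used.
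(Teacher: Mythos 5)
Most of your proposal tracks the paper's own arguments (Picone for the sign change, the Courant-type linear-algebra argument plus unique continuation for the nodal bound, comparison with $r^{-\frac{N-1}{2}}e^{-\sqrt{\Lambda_k}r}$ for the upper decay bound, rearrangement for Faber--Krahn, and the reduction of $\Lambda_2(\o)$ to $\Lambda_1$ of the positivity/negativity sets for Hong--Krahn--Szeg\"o). Two of your sub-arguments genuinely differ from the paper and are both valid: for the lower decay bound you compare $\phi_1$ directly with the exact exterior solution $|x|^{-\frac{N-2}{2}}K_{\frac{N-2}{2}}(\sqrt{\Lambda_1}|x|)$ via the maximum principle, whereas the paper uses the Green-function representation $\phi_1=2\int_\Omega G(\cdot,y)\phi_1(y)\,dy$; and for the sharpness of the Hong--Krahn--Szeg\"o bound you use disjointly supported truncated translates of the first eigenfunction of the ball together with the mediant inequality $\|c_1v_1+c_2v_2\|^2/\int Q(c_1v_1+c_2v_2)^2\le\max_i\|v_i\|^2/\int Qv_i^2$, whereas the paper works with the exact first eigenfunction $f_n$ of $\o_n$ and an orthogonality-by-reflection argument. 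Your route for the sharpness is arguably simpler. (Two minor cautions: in the Picone step your ``cutoff'' must control the term $\phi_1^2/\phi_k$ on large spheres, which requires a gradient/Harnack bound since $\phi_k$ may decay faster than $\phi_1^2$; the paper sidesteps this by regularizing the denominator to $\phi_k+\eps$. And in part (4) you should note why the positive set of $\phi_2$ meets $\o$ at all --- this follows from testing with $\phi_2^+$, as the paper does.)

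There is, however, a genuine gap in part (5). After establishing that $\phi_2^H\in V_2\cap\cS_\o$ and $\|\phi_2^H\|^2=\|\phi_2\|^2=\Lambda_2$, you write that ``this forces equality, i.e.\ $\phi_2^H=\phi_2$ or $\phi_2^H=\phi_2\circ\sigma_H$.'' This deduction is not valid: the polarization identity $\|\phi_2^H\|=\|\phi_2\|$ holds for \emph{every} $H^1$ function, so equality there carries no rigidity information, and the conclusion you can legitimately draw at this stage is only that $\phi_2^H$ is \emph{some} minimizer, hence some eigenfunction with eigenvalue $\Lambda_2$ --- not that it coincides with $\phi_2$ or its reflection (the eigenspace $E_2$ need not be one-dimensional, and even if it were, one would still have to rule out that $\phi_2-\phi_{2,e}$ changes sign inside $H(e)$). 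The dichotomy $\phi_2\ge\phi_{2,e}$ or $\phi_2\le\phi_{2,e}$ in $H(e)$ is exactly the hard step, and the paper's proof of Proposition \ref{phi2fss} devotes its main effort to it: one studies $w_e:=|\phi_2-\phi_{2,e}|=\pm\bigl(2\phi_2^H-\phi_2-\phi_{2,e}\bigr)$, verifies that it is a strong solution of a linear equation with the bounded coefficient $\Lambda_2Q_\o(\mathds 1_{H(e)}-\mathds 1_{\rn\smallsetminus H(e)})$, and then applies the strong maximum principle together with a Hopf-lemma/Taylor-expansion argument across the interfaces $\partial H(e)$ and $\partial\o$ (where the coefficient is discontinuous and $w_e$ is only $C^{1,\alpha}$) to conclude that either $w_e\equiv 0$ or $w_e>0$ throughout $H(e)$. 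Without this maximum-principle step your argument does not reach the hypothesis of the Brock-type characterization, so part (5) is incomplete as written.
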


Part 1 is an analogue of the famous \emph{Courant nodal domain theorem} \cite{ch}. Here, as usual, the connected components of $\rn\smallsetminus\phi_k^{-1}(0)$ are called the \emph{nodal domains of} $\phi_k$. We emphasize that the fact that each $\phi_k$ changes  sign for every $k\geq 2$ is not an easy consequence of orthogonality (as in the bounded domain case with Dirichlet boundary conditions), because one must consider at all times the (sign-changing) weight $Q_\Omega$; instead, the proof is a consequence of Picone's identity, see Lemma~\ref{schlem}. 

The proof of Part 2 is based on estimates for the Green function of the operator $-\Delta+1$ in $\rn$ adapting the ideas from \cite{MR634248} and on comparison principles. We mention that the same estimates for $\phi_1$ appeared in \cite[equation (1.9)]{ferreri2024asymptotic}, where problem \eqref{P_lambdaI} naturally appears in the context of shape optimization. 

Part 3 is a Faber-Krahn type inequality, this result is shown using symmetrizations and states that the first eigenvalue is minimized at balls (among open sets of fixed measure). In contrast, Part 4 proves that the \emph{second eigenvalue} does not achieve its infimum among open bounded sets of fixed measure.  This is sometimes referred to as the Hong–Krahn–Szegö inequality. Note that the inequality in \eqref{ine} is \emph{strict}, in contrast to what happens in the Dirichlet bounded domain case (where the equality is achieved).

Part 5 shows that the eigenfunctions inherit the symmetries of the domain.  The radial symmetry of $\phi_1$ is an easy consequence of the simplicity of the first eigenvalue, whereas the foliated Schwarz symmetry of $\phi_2$ is shown using polarizations. 

\medskip

To close this introduction, let us mention that we have only considered \emph{positive} eigenvalues of \eqref{P_lambdaI}. However, since the function $Q_\Omega$ is sign-changing one cannot rule out the existence of negative eigenvalues. The following is a nonexistence result of eigenfunctions in $H^1(\rn)$ associated to negative eigenvalues. 

\begin{theorem}\label{thm:poslam}
Let $\Omega \subset \rn$ be a smooth, open and bounded domain and let $(\Lambda,u)$ be a nontrivial solution to \eqref{P_lambdaI}. If $\Omega$ is starshaped (or the disjoint union of starshaped sets), then $\Lambda>0$.
\end{theorem}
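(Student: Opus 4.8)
The plan is to argue by contradiction by means of a Pohozaev-type identity. From the computation preceding the statement, any nontrivial solution $(\Lambda,u)$ of \eqref{P_lambdaI} satisfies $\Lambda\neq 0$ and $\Lambda>0$ if and only if $\irn Q_\o u^2>0$, so it suffices to rule out $\Lambda<0$. Assume then that $(\Lambda,u)$ solves \eqref{P_lambdaI} with $\Lambda<0$. By elliptic regularity (as in Theorem~\ref{thm:existence of eigenvalues}(ii)) we have $u\in W^{2,s}_{loc}(\rn)\cap\cC^{1,\alpha}_{loc}(\rn)$ and $u$ is smooth on $\rn\ssm\partial\o$; moreover $u\in H^1(\rn)$ provides a sequence $R_n\to\infty$ with $R_n\int_{\partial B_{R_n}}(u^2+|\nabla u|^2)\,\d S\to 0$, and testing the equation with $u$ yields $\|u\|^2=\Lambda\irn Q_\o u^2$. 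Suppose first that $\o$ is starshaped and, after a translation, that it is starshaped with respect to the origin, so $x\cdot\nu_\o(x)\geq 0$ on $\partial\o$, with $\nu_\o$ the outer unit normal. Multiply $-\Delta u=\Lambda Q_\o u$ by $x\cdot\nabla u$ and integrate over $B_{R_n}$. Two integrations by parts on the left give $-\tfrac{N-2}{2}\int_{B_{R_n}}|\nabla u|^2$ plus a term over $\partial B_{R_n}$ that is $O\!\big(R_n\int_{\partial B_{R_n}}|\nabla u|^2\,\d S\big)\to 0$ (no interface term over $\partial\o$ occurs, since $u\in W^{2,s}_{loc}$ and $|\nabla u|^2\in W^{1,1}_{loc}$). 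On the right, write $Q_\o=2\chi_\o-1$ and use $\int_{B_{R_n}}\chi_\o\,x\cdot\nabla(u^2)=\io x\cdot\nabla(u^2)=\int_{\partial\o}(x\cdot\nu_\o)u^2\,\d S-N\io u^2$ together with $\int_{B_{R_n}}x\cdot\nabla(u^2)=R_n\int_{\partial B_{R_n}}u^2\,\d S-N\int_{B_{R_n}}u^2$. Letting $n\to\infty$, using $2\io u^2-\irn u^2=\irn Q_\o u^2$, and substituting $\Lambda\irn Q_\o u^2=\|u\|^2$, everything collapses to
\[
\Lambda\int_{\partial\o}(x\cdot\nu_\o)\,u^2\,\d S=\|u\|^2 .
\]
Now $\|u\|^2>0$ because $u\not\equiv 0$ in $H^1(\rn)$ (a constant in $L^2(\rn)$ vanishes), while $\int_{\partial\o}(x\cdot\nu_\o)u^2\,\d S\geq 0$ by starshapedness; hence $\Lambda<0$ is impossible, and $\Lambda>0$.

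For the case $\o=\bigsqcup_{i=1}^m\o_i$ with $\o_i$ starshaped about $x_i$, note that the sets $\overline{\o_i}$ are pairwise disjoint and compact, so we may fix a globally Lipschitz field $Y\colon\rn\to\rn$ of at most linear growth with $Y(x)=x-x_i$ near $\overline{\o_i}$ for every $i$. Repeating the computation above with $x$ replaced by $Y$, carried out separately on $\io$ and on $\int_{\rn\ssm\overline\o}$ and then added (the $\partial\o$ interface terms cancel since $u\in\cC^1$ across $\partial\o$ and $Y$ is continuous), leads to
\[
-\tfrac{N-2}{2}\io|\nabla u|^2+\mathcal R(Y,u)=\Lambda\sum_i\int_{\partial\o_i}\big((x-x_i)\cdot\nu_{\o_i}\big)u^2\,\d S-\tfrac{N\Lambda}{2}\io u^2+\tfrac{\Lambda}{2}\int_{\rn\ssm\overline\o}(\nabla\cdot Y)\,u^2 ,
\]
where $\mathcal R(Y,u):=\int_{\rn\ssm\overline\o}\big[(\partial_jY_k)\partial_ju\,\partial_ku-\tfrac12(\nabla\cdot Y)|\nabla u|^2\big]$ collects the ``transition'' terms supported outside $\o$. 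Starshapedness of each $\o_i$ about $x_i$ still gives $\sum_i\int_{\partial\o_i}\big((x-x_i)\cdot\nu_{\o_i}\big)u^2\,\d S\geq 0$, and the goal is to recover the clean identity $\Lambda\sum_i\int_{\partial\o_i}\big((x-x_i)\cdot\nu_{\o_i}\big)u^2\,\d S=\|u\|^2$, which forces $\Lambda>0$ exactly as before.

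The step I expect to be the crux is this recovery: on the connected exterior $\rn\ssm\overline\o$ the field $Y$ cannot remain a pure dilation (the centers $x_i$ differ), so $\mathcal R(Y,u)$ is a priori sign-indefinite. The remedy is to exploit that $Q_\o\equiv-1$ outside $\o$ — that is, $-\Delta u=-\Lambda u$ there — to rewrite $\mathcal R(Y,u)$ via the equation, and to choose $Y$ on $\rn\ssm\overline\o$ with prescribed divergence and prescribed symmetric part of $\nabla Y$ (the required flux conditions being compatible because $\int_{\partial\o_i}\nu_{\o_i}\,\d S=0$), so that the surplus terms cancel; making this precise is the main technical work. An alternative argument, which even removes the starshapedness hypothesis, bypasses Pohozaev altogether: outside $\o$ one has $-\Delta u=(-\Lambda)u$ with $-\Lambda>0$, so Rellich's uniqueness theorem for the Helmholtz equation forces $u\equiv 0$ near infinity, and unique continuation (the potential $\Lambda Q_\o$ being bounded) then gives $u\equiv 0$ on $\rn$, contradicting $u\not\equiv 0$. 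I would nonetheless keep the Pohozaev argument as the main proof, since it is elementary and self-contained.
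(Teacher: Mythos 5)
For a single starshaped domain your argument is essentially the paper's: the authors invoke their Poho\v zaev identity (Theorem \ref{thm:poho}, proved in \cite{cfs25}) and immediately obtain $\frac{\Lambda}{N}\int_{\partial\Omega}u^2\,\zeta\cdot\nu_\Omega = \frac{1}{N}\|u\|^2$, which is exactly your identity $\Lambda\int_{\partial\o}(x\cdot\nu_\o)u^2 = \|u\|^2$; you merely rederive the identity from scratch (correctly, including the treatment of the $\partial\o$ terms coming from the jump of $Q_\o$ and the vanishing of the $\partial B_{R_n}$ terms along a suitable sequence). So that part is fine.

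The disjoint-union case is where your main argument stops short: as you yourself say, with a vector field $Y$ that is a dilation about a different center near each component, the remainder $\mathcal R(Y,u)$ supported on $\rn\ssm\overline\o$ is sign-indefinite, and you do not actually carry out the cancellation you hope for. As written, this is a genuine gap in your primary proof of the stated theorem (note that the translation identities $\int_{\partial\o}u^2\nu_j=0$ obtained by testing with $\partial_j u$ only allow a \emph{single global} recentering, not a separate recentering of each component). However, your ``alternative'' argument is complete and correct, and in fact stronger: outside $\overline\o$ one has $\Delta u+(-\Lambda)u=0$ with $-\Lambda>0$, $u$ is smooth there and lies in $L^2$, so Rellich's lemma forces $u\equiv 0$ outside a large ball, and weak unique continuation for $-\Delta-\Lambda Q_\o$ (with bounded potential, e.g.\ via \cite{gl}, which the paper already uses elsewhere) gives $u\equiv 0$, a contradiction. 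This needs no starshapedness at all and would settle the open question posed after Theorem \ref{thm:poslam}. You should promote this to your main proof (or at least to the proof of the disjoint-union case) rather than leaving the Poho\v zaev computation with an unresolved crux.
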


The proof relies on a Poho\v zaev-type identity shown in \cite{cfs25}.  The question remains open whether this is true for every $\o$ (not necessarily starshaped).

\begin{question}
Is it true that, for any open bounded subset $\o\subset \rn$, there are no solutions $(\Lambda, u)$ to problem \eqref{P_lambdaI} with $\Lambda<0$?
\end{question}

The paper is organized as follows.  In Section~\ref{sec:eigen} we study the existence of solutions to the eigenvalue problem \eqref{P_lambdaI} and show the qualitative properties stated in Theorem~\ref{theonethm} and the nonexistence Theorem~\ref{thm:poslam}.  Section~\ref{sec:nonlinear} is devoted to the asymptotic analysis of the nonlinear problem as $p\to 2$. In Subsection~\ref{u:sec}, we prove our main result on uniqueness and nondegeneracy in the almost linear regime, that is, Theorem~\ref{exit p small}. In Section~\ref{decay:sec} we derive sharp decay estimates for the solutions to \eqref{main} for small values of $p$. Finally, Section \ref{mult:sec} contains a multiplicity result of least energy solutions in the slightly subcritical regime. 

\section{The eigenvalue problem}\label{sec:eigen}

In this section we study the eigenvalue problem \eqref{P_lambdaI}, where $\o$ is an open bounded subset of $\rn$ and $Q_\o:\rn\to\r$ is given by \eqref{Q}. A solution to this problem is a pair $(\Lambda, u)\in\r\times H^1(\rn)$ such that $u\neq 0$ and
$$\irn\nabla u\cdot\nabla\vp=\Lambda\irn Q_\o u\vp\qquad\text{for all \ }\vp\in\cC^\infty_c(\rn).$$

\subsection{Existence of positive eigenvalues}

Let $\Lambda>0$. Since $(\Lambda,tu)$ with $t\in\r\smallsetminus\{0\}$ solves \eqref{P_lambdaI} whenever $(\Lambda,u)$ does, it suffices to look for solutions $u$ in 
\begin{align}\label{S:def}
\cS_\o:=\Big\{v\in D^{1,2}(\rn):\irn Q_\o v^2=1\Big\}.    
\end{align}
Note that, if $v\in\cS_\o$, then $v\in H^1(\rn)$, because $D^{1,2}(\rn)\subset L^2_{loc}(\rn)$ and, as $\o$ is bounded,
\begin{equation}\label{eq:in H^1}
\int_{\rn\smallsetminus\o}v^2=\io v^2-1<\infty.
\end{equation}
We write $\langle u,v\rangle:=\irn\nabla u\cdot\nabla v$ for the inner product and norm in $D^{1,2}(\rn)$. 

\begin{lemma} \label{lem:existence}
Let $V$ be a closed linear subspace of $D^{1,2}(\rn)$ such that $V\cap\cS_\o\neq\emptyset$. Then,
$$\Lambda_V:=\inf_{v\in V\cap\cS_\o}\|v\|^2$$
is attained at some function $\phi_V\in V\cap\cS_\o$. As a consequence, $\phi_V\in H^1(\rn)$, and it satisfies
\begin{equation}\label{eq:V}
\langle \phi_V, v\rangle=\Lambda_V\irn Q_\o \phi_V v,\qquad\text{for all \ }v\in V\cap H^1(\rn).
\end{equation}
\end{lemma}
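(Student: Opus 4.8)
The plan is to prove existence of the minimizer by the direct method, and then derive the Euler--Lagrange equation \eqref{eq:V} by a standard Lagrange-multiplier-free argument. First I would take a minimizing sequence $(v_n)\subset V\cap\cS_\o$, so that $\|v_n\|^2\to\Lambda_V$; in particular $(v_n)$ is bounded in $D^{1,2}(\rn)$, and hence (up to a subsequence) $v_n\rh\phi$ weakly in $D^{1,2}(\rn)$, $v_n\to\phi$ strongly in $L^2_{loc}(\rn)$ (by Rellich, since $D^{1,2}(\rn)\hookrightarrow L^2_{loc}(\rn)$ compactly) and $v_n\to\phi$ a.e.\ in $\rn$. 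Since $V$ is a closed linear subspace, it is weakly closed, so $\phi\in V$. Weak lower semicontinuity of the norm gives $\|\phi\|^2\le\liminf_n\|v_n\|^2=\Lambda_V$.

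The crux — and the step I expect to be the main obstacle — is showing that $\phi\in\cS_\o$, i.e.\ that the constraint $\irn Q_\o v^2=1$ passes to the limit. Here the sign-changing weight $Q_\o$ is the issue: $\irn Q_\o v^2 = \io v^2 - \int_{\rn\ssm\o}v^2$ is a difference of two terms, and the negative term $\int_{\rn\ssm\o}v^2$ could in principle lose mass to infinity. However, the positive term behaves well: since $\o$ is bounded, $\io v_n^2\to\io\phi^2$ by the strong $L^2(\o)$ convergence (compact embedding on the bounded set $\o$). For the negative term, Fatou's lemma gives $\int_{\rn\ssm\o}\phi^2\le\liminf_n\int_{\rn\ssm\o}v_n^2$, hence
\[
\irn Q_\o\phi^2=\io\phi^2-\int_{\rn\ssm\o}\phi^2\ \ge\ \lim_n\io v_n^2-\liminf_n\int_{\rn\ssm\o}v_n^2\ \ge\ \limsup_n\Big(\io v_n^2-\int_{\rn\ssm\o}v_n^2\Big)=1,
\]
so $\irn Q_\o\phi^2\ge 1$; in particular $\phi\neq 0$. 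Now set $\phi_V:=\phi/\big(\irn Q_\o\phi^2\big)^{1/2}\in V\cap\cS_\o$; then $\|\phi_V\|^2=\|\phi\|^2/\irn Q_\o\phi^2\le\|\phi\|^2\le\Lambda_V$, and since $\phi_V$ is admissible the reverse inequality $\|\phi_V\|^2\ge\Lambda_V$ holds by definition of the infimum. Hence $\|\phi_V\|^2=\Lambda_V$, the infimum is attained, and moreover $\irn Q_\o\phi^2=1$, so in fact no rescaling was needed and the weak limit $\phi$ itself is the minimizer. Finally, $\phi_V\in H^1(\rn)$ follows from \eqref{eq:in H^1} since $\phi_V\in\cS_\o$.

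It remains to derive \eqref{eq:V}. Fix $v\in V\cap H^1(\rn)$ and consider, for $t\in\r$ small, the curve $t\mapsto \phi_V+tv\in V$. Since $\irn Q_\o\phi_V^2=1$ and the map $t\mapsto\irn Q_\o(\phi_V+tv)^2=1+2t\irn Q_\o\phi_V v+t^2\irn Q_\o v^2$ is a polynomial in $t$ with value $1$ at $t=0$ and positive leading-order behaviour controlled near $0$, one normalizes: for $|t|$ small the quantity $g(t):=\irn Q_\o(\phi_V+tv)^2$ is positive (it equals $1$ at $t=0$ and is continuous), so $w_t:=(\phi_V+tv)/\sqrt{g(t)}\in V\cap\cS_\o$ is admissible, whence $\|w_t\|^2\ge\Lambda_V=\|\phi_V\|^2$. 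The function $t\mapsto\|w_t\|^2=\|\phi_V+tv\|^2/g(t)$ is differentiable at $t=0$ (both numerator and denominator are smooth in $t$ with $g(0)=1$), has a minimum there, so its derivative vanishes: differentiating, $\tfrac{d}{dt}\big|_{t=0}\|\phi_V+tv\|^2 - \|\phi_V\|^2\, g'(0)=0$, that is $2\langle\phi_V,v\rangle - \|\phi_V\|^2\cdot 2\irn Q_\o\phi_V v=0$, which rearranges to $\langle\phi_V,v\rangle=\Lambda_V\irn Q_\o\phi_V v$ for all $v\in V\cap H^1(\rn)$, as claimed. (A routine density remark handles that it suffices to test against $v\in V\cap H^1(\rn)$ rather than merely $\cC_c^\infty$; note $\phi_V+tv\in D^{1,2}(\rn)$ and lies in $V$ since $V\subset D^{1,2}(\rn)$ is a subspace and $v\in V$.) $\qed$
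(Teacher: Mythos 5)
Your proposal is correct and follows essentially the same route as the paper's proof: direct method with weak convergence, strong $L^2(\o)$ convergence for the positive part of the constraint plus Fatou's lemma for the negative part to get $\irn Q_\o\phi^2\ge 1$, normalization to conclude the constraint holds with equality, and the quotient-function differentiation for \eqref{eq:V}. No gaps.
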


\begin{proof}
Let $u_n\in V\cap\cS_\o$ be such that $\|u_n\|^2\to\Lambda_V$. Then $(u_n)$ is bounded in $D^{1,2}(\rn)$ and, passing to a subsequence, $u_n\rh u$ weakly in $D^{1,2}(\rn)$, $u_n\to u$ in $L^2_{loc}(\rn)$ and $u_n\to u$ a.e. in $\rn$. As $V$ is a closed linear subspace of $D^{1,2}(\rn)$ we have that $u\in V$. Furthermore, as $u_n\in\cS_\o$, using Fatou's lemma we obtain
\begin{equation} \label{eq:>1}
1 = \lim_{n\to\infty}\io u_n^2 - \lim_{n\to\infty}\int_{\rn\smallsetminus\o} u_n^2 \leq \io u^2-\int_{\rn\smallsetminus\o} u^2=\irn Q_\o u^2.
\end{equation}
Therefore, $u\neq 0$ and, as a consequence,
\begin{equation*}
0<\|u\|^2\leq\lim_{n\to\infty}\|u_n\|^2=\Lambda_V\leq\left\|\frac{u}{\sqrt{\irn Q_\o u^2}} \right\|^2=\frac{\|u\|^2}{\irn Q_\o u^2}.
\end{equation*}
This implies that $\irn Q_\o u^2\leq 1$ and, together with \eqref{eq:>1} yields $u\in V\cap\cS_\o$ and $\|u\|^2=\Lambda_V$. As shown in \eqref{eq:in H^1}, $u\in H^1(\rn)$.

To prove that $\phi_V:=u$ satisfies \eqref{eq:V} we fix $v\in V\cap H^1(\rn)$. Since $\{w\in H^1(\rn):\irn Q_\o w^2>0\}$ is open in $H^1(\rn)$, there exists $\eps>0$ such that $\irn Q_\o(u+tv)^2>0$ for all $t\in(-\eps,\eps)$. Then, 
$$\frac{u+tv}{\sqrt{\irn Q_\o(u+tv)^2}}\in V\cap\cS_\o\quad\text{for all \ }t\in(-\eps,\eps),$$
and $0$ is a minimum of the function
$$f(t):=\left\|\frac{u+tv}{\sqrt{\irn Q_\o(u+tv)^2}} \right\|^2=\frac{\|u+tv\|^2}{\irn Q_\o(u+tv)^2},\qquad t\in(-\eps,\eps).$$
This function is differentiable and
$$0=f'(0)=2\left(\langle u, v\rangle - \Lambda_V\irn Q_\o u v\right).$$
This completes the proof.
\end{proof}

\begin{lemma} \label{lem:unbounded}
If $\{\phi_k\in\cS_\o:k\in\n\}$ is a set of functions that are orthogonal in $D^{1,2}(\rn)$, $\Lambda_k:=\|\phi_k\|^2$ and $(\Lambda_k,\phi_k)$ solves \eqref{P_lambdaI}, then $(\Lambda_k)$ is unbounded. As a consequence, for each $\Lambda>0$, the space 
$$E_\Lambda:=\{u\in H^1(\rn):(\Lambda, u) \text{ solves }\eqref{P_lambdaI}\}\cup\{0\}$$ 
has finite dimension.  
\end{lemma}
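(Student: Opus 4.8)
The plan is to prove the two assertions in turn, both by contradiction, with the boundedness of $\o$ supplying the crucial compactness. For the unboundedness of $(\Lambda_k)$, suppose $\Lambda_k=\|\phi_k\|^2\le M$ for all $k\in\n$. Then the normalized functions $e_k:=\phi_k/\|\phi_k\|$ form an \emph{orthonormal} sequence in $D^{1,2}(\rn)$, so $e_k\rh 0$ weakly in $D^{1,2}(\rn)$. Since $\o$ is bounded and the embedding $D^{1,2}(\rn)\hookrightarrow L^2_{loc}(\rn)$ is compact (as already used in the proof of Lemma~\ref{lem:existence}), it follows that $e_k\to 0$ strongly in $L^2(\o)$, hence $\io e_k^2\to 0$. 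On the other hand, $\phi_k\in\cS_\o$ gives $\io\phi_k^2=1+\int_{\rn\ssm\o}\phi_k^2\ge 1$, and therefore
\[
1\le\io\phi_k^2=\Lambda_k\io e_k^2\le M\io e_k^2\longrightarrow 0,
\]
a contradiction. Note that this uses only an upper bound on the $\Lambda_k$; no lower bound on the spectrum is needed.

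For the finite dimensionality of $E_\Lambda$, fix $\Lambda>0$ and suppose $\dim E_\Lambda=\infty$. Since $N\ge 3$, the Sobolev embedding yields $H^1(\rn)\subset D^{1,2}(\rn)$, so $E_\Lambda$ is a linear subspace of $D^{1,2}(\rn)$; choosing a countable linearly independent subset and applying Gram--Schmidt with respect to $\langle\cdot,\cdot\rangle$ produces nonzero functions $\psi_k\in E_\Lambda$, $k\in\n$, pairwise orthogonal in $D^{1,2}(\rn)$. Testing \eqref{P_lambdaI} with $\psi_k$ gives $\irn Q_\o\psi_k^2=\|\psi_k\|^2/\Lambda>0$, so the rescaled functions $\phi_k:=\big(\irn Q_\o\psi_k^2\big)^{-1/2}\psi_k$ lie in $\cS_\o$, remain pairwise orthogonal in $D^{1,2}(\rn)$, and solve \eqref{P_lambdaI} with $\Lambda_k:=\|\phi_k\|^2=\Lambda\irn Q_\o\phi_k^2=\Lambda$. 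Thus $\{\phi_k:k\in\n\}$ satisfies the hypotheses of the first part of the lemma while $(\Lambda_k)\equiv\Lambda$ is bounded, a contradiction. Hence $\dim E_\Lambda<\infty$.

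The only genuinely non-routine point is the compactness invoked in the first step: it is precisely the boundedness of $\o$, through the compact embedding $D^{1,2}(\rn)\hookrightarrow\hookrightarrow L^2(\o)$, that forces a weakly null orthonormal sequence to violate the normalization $\irn Q_\o\phi_k^2=1$. Everything else — the Gram--Schmidt construction, the rescaling $\phi_k=(\irn Q_\o\psi_k^2)^{-1/2}\psi_k$, and the identity $\|\phi_k\|^2=\Lambda$ obtained by testing the equation with $\phi_k$ — is routine bookkeeping, and I would not expect any subtlety there.
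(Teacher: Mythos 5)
Your proof is correct and follows essentially the same route as the paper: the same compactness argument (orthonormal sequences in $D^{1,2}(\rn)$ are weakly null, hence strongly null in $L^2(\o)$ by boundedness of $\o$, contradicting the normalization $\irn Q_\o\phi_k^2=1$), and the same reduction of the finite-dimensionality of $E_\Lambda$ to the first assertion. The only difference is that you spell out the Gram--Schmidt step and verify $\irn Q_\o\psi_k^2>0$ before normalizing, details the paper leaves implicit.
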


\begin{proof}
As $\{\Lambda_k^{-\frac{1}{2}} \phi_k:k\in\n\}$ is an orthonormal set in $D^{1,2}(\rn)$, passing to a subsequence we have that $\Lambda_k^{-\frac{1}{2}} \phi_k\rh 0$ weakly in $D^{1,2}(\rn)$ and $\Lambda_k^{-\frac{1}{2}} \phi_k\to 0$ strongly in $L^2_{loc}(\rn)$. Therefore,
$$0\leq\Lambda_k^{-1}=\Lambda_k^{-2}\|\phi_k\|^2=\Lambda_k^{-1}\irn Q_\o \phi_k^2\leq\io(\Lambda_k^{-\frac{1}{2}} \phi_k)^2\longrightarrow 0.$$
This proves that $(\Lambda_k)$ is unbounded.

If $E_\Lambda$ had infinite dimension, it would contain a set $\{\phi_k\in\cS_\o:k\in\n\}$ of orthogonal functions in $D^{1,2}(\rn)$ with $\Lambda=\|\phi_k\|^2$. Arguing as before, this would imply that $\Lambda=0$, a contradiction. 
\end{proof}

We are ready to prove Theorem~\ref{thm:existence of eigenvalues}.

\begin{proof}[Proof of Theorem~\ref{thm:existence of eigenvalues}]
We apply Lemma~\ref{lem:existence} inductively to the spaces $V_k$, where $V_1:=D^{1,2}(\rn)$ and $V_k$ is the orthogonal complement in $D^{1,2}(\rn)$ of the space generated by $\{\phi_1,\ldots,\phi_{k-1}\}$ if $k\geq 2$. Lemma~\ref{lem:existence} states that there exists $\phi_k\in V_k\cap\cS_\o$ such that $\|\phi_k\|^2=\Lambda_k$ and 
\begin{equation}\label{eq:solution1}
\langle \phi_k, v\rangle=\Lambda_k\irn Q_\o \phi_k v\qquad\text{for all \ }v\in V_k\cap H^1(\rn).
\end{equation}
Then, $\langle \phi_k,\phi_i\rangle=0$ for all $i=1,\ldots, k-1$ and, as 
$0=\langle \phi_i,\phi_k\rangle=\Lambda_i\irn Q_\o \phi_i\phi_k$
and $\Lambda_i>0$ we get that
\begin{equation}\label{eq:solution2}
\langle \phi_k,\phi_i\rangle=0=\Lambda_k\irn Q_\o \phi_k\phi_i\qquad\text{for all \ }i=1,\ldots,k-1.
\end{equation}

$(i):$ \ \eqref{eq:solution1} and \eqref{eq:solution2} imply that $(\Lambda_k,\phi_k)$ solves \eqref{P_lambdaI}.

$(ii):$ \ This follows from standard regularity arguments, see for instance \cite[Lemma 3.3]{CHS25}.

$(iii):$ \ Replacing $\phi_1$ with $-\phi_1$ if necessary, we may assume that $\phi_1^+:=\max\{\phi_1,0\}\neq 0$. Testing equation \eqref{P_lambdaI} with $\phi_1^+$ we get
$$0<\|\phi_1^+\|^2 = \Lambda_1 \irn Q_\o(\phi_1^+)^2.$$
Then,
$$\what \phi_1^+:=\frac{\phi_1^+}{\sqrt{\irn Q_\o(\phi_1^+)^2}}\in\cS_\o\qquad\text{and}\qquad\Lambda_1=\inf_{v\in\cS_\o}\|v\|^2=\|\what \phi_1^+\|^2.$$
It follows from Lemma~\ref{lem:existence} that $(\Lambda_1,\what \phi_1^+)$ solves \eqref{P_lambdaI}. The strong maximum principle for strong solutions \cite[Theorem~9.6]{gt} applied to the equation 
$$-\Delta\what \phi_1^+ + \Lambda_1\mathds 1_{\rn \ssm \Omega}\what \phi_1^+= \Lambda_1 \mathds 1_{\Omega}\what \phi_1^+\geq 0\qquad\text{in  \ } \rn,$$
where $\mathds 1_A$ is the characteristic function of $A$, yields $\what \phi_1^+>0$ in $\rn$. Hence, $\phi_1=\phi_1^+>0$  in $\rn$.

$(iv):$ \ As $V_1\supset V_2\supset\cdots\supset V_k\supset\cdots$ we have that $0<\Lambda_1\leq\Lambda_2\leq\cdots\leq\Lambda_k\leq\cdots$ and, by Lemma~\ref{lem:unbounded}, $\Lambda_k\to\infty$ and $\dim(E_k)<\infty$. If $\Lambda_1=\Lambda_2$ there would exist two functions $v_1,v_2\in\cS_\o$, orthogonal in $L^2(\rn)$, such that $(\Lambda_1,v_1)$ and $(\Lambda_1,v_2)$ solve \eqref{P_lambdaI}. But then, by $(iii)$, their product $v_1v_2$ would be either strictly positive or strictly negative in $\rn$, contradicting $\irn v_1v_2=0$.
\end{proof}

\begin{remark}\label{no base}
    The set of eigenfunctions $\{\phi_k\in \mathcal{S}_\o:\;k\in\mathbb{N}\,\}$ does not generate the space $D^{1,2}(\rn)$.  Indeed, let $\varphi\in C_c^{\infty}(\rn)$ be such that $\irn Q_\o\varphi^2=-1$ and suppose, by contradiction, that $\varphi_m:=\sum_{i=1}^{m}a_i\phi_i\to\varphi$ in $D^{1,2}(\rn)$ for some $a_i\in\r$. Passing to a subsequence,
    \begin{align}\label{ll2conv}
    \varphi_m\to\varphi\qquad \text{ strongly in $L^2(\o)$ and a.e. in $\rn$.}
    \end{align} Now, since $\irn Q_\o\phi_i\phi_j=0$ if $i\neq j$ and $\irn Q_\o\phi_i^2=1$,
    \begin{align*}
        \irn Q_\o\varphi_m^{2}=\sum_{i=1}^{m}a_i^2\irn Q_\o\phi_i^2=\sum_{i=1}^{m}a_i^2\geq0\qquad \text{ for every }m\in \mathbb N.
    \end{align*}
Then, using Fatou’s Lemma and \eqref{ll2conv},
    \begin{align*}
        0\leq \liminf_{m\to\infty}\irn Q_\o\varphi_m^{2}\leq \int_\o\varphi^2-\int_{\rn\setminus\o}\varphi^2=\irn Q_\o\varphi^2=-1,
    \end{align*}
    a contradiction. 
\end{remark}

 \subsection{Properties of the eigenfunctions}\label{prop:sec}

We show next that the eigenfunctions $\phi_k$ given by Theorem~\ref{thm:existence of eigenvalues} share some of the well known properties of Dirichlet eigenfunctions in bounded domains. Recall that \( \phi_k \) denotes the \( k \)-th eigenfunction associated to the $k$-th eigenvalue $\Lambda_k$ normalized so that 
\begin{align}\label{normalization}
\int_{\rn}Q_\Omega \phi_k^2=1.    
\end{align}

\subsubsection{Monotonicity and scalings} We begin with some easy-to-check properties regarding scalings and monotonicity.
\begin{remark}\label{scaling}
The following properties follow directly from the definition and a change of variables.
\begin{enumerate}
    \item  If $\o_1,\o_2\subset \rn$ are open bounded subsets and $\o_1\subset \o_2$, then $\Lambda_1(\o_1)\geq \Lambda_1(\o_2)$.
    \item  If $t>0$, then $\Lambda_k(t\o)=t^{-2}\Lambda_k(\o)$.
\end{enumerate}
\end{remark} 

\subsubsection{Decay}
Next, we show the sharp decay of $\phi_k$.
\begin{theorem}\label{decay:thm}
    For each $k\in \mathbb N$ there exists $C>0$ such that 
\begin{align}\label{fe}
|\phi_k|\leq C |x|^{-\frac{N-1}{2}}e^{-\sqrt{\Lambda_k}|x|} \qquad \text{for all \ }|x|\geq 1.
\end{align}
This estimate is sharp, in the sense that there exists $c>0$ such that 
\begin{align*}
|\phi_1|>c |x|^{-\frac{N-1}{2}}e^{-\sqrt{\Lambda_1}|x|}\qquad \text{for all \ }|x|\geq 1. 
\end{align*}
\end{theorem}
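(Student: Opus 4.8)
The plan is to obtain the upper bound by comparison with an explicit supersolution of $-\Delta w + \Lambda_k w = 0$ built from the fundamental solution of $-\Delta + \Lambda_k$ in $\rn$, and the matching lower bound for $\phi_1$ by comparison with a subsolution of the same type, exploiting the sign of $\phi_1$. First I would record that, outside $\overline\Omega$, the equation \eqref{P_lambdaI} reads $-\Delta \phi_k = -\Lambda_k \phi_k$, i.e. $-\Delta \phi_k + \Lambda_k \phi_k = 0$ in $\rn\ssm\overline\Omega$. By part $(ii)$ of Theorem \ref{thm:existence of eigenvalues} the eigenfunctions are continuous, and standard elliptic estimates (together with the fact that $\phi_k\in H^1(\rn)$, so $\phi_k(x)\to 0$ along a sequence, hence by the equation and interior estimates uniformly) give $\phi_k(x)\to 0$ as $|x|\to\infty$; fix $R_0>1$ with $\overline\Omega\subset B_{R_0}$ and $M:=\max_{|x|=R_0}|\phi_k|$.

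For the upper bound, I would use the Green function $G_{\Lambda}$ of $-\Delta+\Lambda$ on $\rn$, which in terms of the modified Bessel function $K_\nu$ with $\nu=\frac{N-2}{2}$ satisfies the classical asymptotics $G_\Lambda(x)\sim c_N |x|^{-\frac{N-1}{2}} e^{-\sqrt{\Lambda}|x|}$ as $|x|\to\infty$ (this is exactly the input from \cite{MR634248} mentioned in the introduction); in particular $|x|\mapsto G_{\Lambda_k}(x)$ is, for $|x|\ge R_0$, comparable to $|x|^{-\frac{N-1}{2}} e^{-\sqrt{\Lambda_k}|x|}$ from both sides, and it solves $-\Delta G_{\Lambda_k}+\Lambda_k G_{\Lambda_k}=0$ there. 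The function $\psi(x):=A\,G_{\Lambda_k}(x)$, with $A$ chosen so that $A\,G_{\Lambda_k}\ge M$ on $\{|x|=R_0\}$, is then a positive solution of the same equation in $\rn\ssm\overline{B_{R_0}}$ with $\psi\ge |\phi_k|$ on the inner boundary and $\psi,|\phi_k|\to 0$ at infinity. Since $\pm\phi_k$ are subsolutions of $-\Delta w+\Lambda_k w=0$ on that exterior domain, the comparison principle for the coercive operator $-\Delta+\Lambda_k$ on the exterior domain (applied to $\psi\mp\phi_k$, which is a supersolution that is nonnegative on the boundary and has nonnegative liminf at infinity) gives $|\phi_k|\le \psi$ on $\rn\ssm B_{R_0}$, hence \eqref{fe} on $\{|x|\ge R_0\}$; enlarging $C$ covers the remaining compact shell $1\le|x|\le R_0$ by continuity.

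For the sharpness, I restrict to $k=1$ where $\phi_1>0$ in all of $\rn$ by part $(iii)$ of Theorem \ref{thm:existence of eigenvalues}, so $m:=\min_{|x|=R_0}\phi_1>0$. Here I would like to bound $\phi_1$ below by $a\,G_{\Lambda_1}$ on the exterior region, but $G_{\Lambda_1}$ is not a subsolution in the region where it could blow past $\phi_1$ near $\partial B_{R_0}$ — this is where one must be slightly careful, and it is the main obstacle: one needs the comparison to hold all the way to infinity, so I would instead compare with a slightly relaxed subsolution. Concretely, for $\delta>0$ small the function $\underline w(x):=a\,G_{\Lambda_1-\delta}(x)$ satisfies $-\Delta \underline w+\Lambda_1\underline w=\delta\,\underline w>0$, i.e. it is a strict supersolution — the wrong sign — so instead I use $\underline w(x):=a\,G_{\Lambda_1+\delta}(x)$, which is a subsolution of $-\Delta w+\Lambda_1 w=0$ (indeed $-\Delta\underline w+\Lambda_1\underline w=-\delta\underline w<0$), choose $a$ so that $\underline w\le m$ on $\{|x|=R_0\}$, and conclude by comparison that $\phi_1\ge \underline w=a\,G_{\Lambda_1+\delta}$ on $\rn\ssm B_{R_0}$. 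This gives $\phi_1(x)\ge c_\delta |x|^{-\frac{N-1}{2}}e^{-\sqrt{\Lambda_1+\delta}\,|x|}$, which is not quite the claimed exponent. To remove the $\delta$, I would upgrade the argument: work on the true equation $-\Delta\phi_1+\Lambda_1\phi_1=0$ on $\rn\ssm\overline\Omega$ and use the (exterior-domain) Green function of $-\Delta+\Lambda_1$ with pole distribution on $\partial B_{R_0}$ together with the Harnack inequality and the boundary Hopf lemma to get a representation $\phi_1(x)\ge \int_{\partial B_{R_0}} G^{\mathrm{ext}}_{\Lambda_1}(x,y)\,d\mu(y)$ with $\mu$ a positive measure of positive mass (coming from the inward normal derivative of $\phi_1$, which is strictly negative by Hopf since $\phi_1>0$ inside), and then the known lower asymptotics of $G^{\mathrm{ext}}_{\Lambda_1}$ — identical to those of $G_{\Lambda_1}$ up to constants, again from \cite{MR634248} — yield $\phi_1(x)\ge c\,|x|^{-\frac{N-1}{2}}e^{-\sqrt{\Lambda_1}|x|}$ for $|x|\ge R_0$, and finally for $|x|\ge1$ after possibly shrinking $c$. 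I expect the delicate point throughout to be the precise two-sided Bessel-type asymptotics of the Green function and the legitimacy of the comparison principle on the unbounded exterior domain (which holds because $-\Delta+\Lambda_k$ is coercive and both functions vanish at infinity), rather than any genuinely new difficulty.
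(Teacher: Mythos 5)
Your proposal is correct in substance and follows the same comparison-principle strategy as the paper, but the two halves differ in execution. For the upper bound the paper avoids Bessel asymptotics altogether: it checks by direct computation that $u(x)=|x|^{-\frac{N-1}{2}}e^{-\sqrt{\Lambda_k}|x|}$ satisfies $-\Delta u+\Lambda_k u=\tfrac{(N-1)(N-3)}{4}|x|^{-\frac{N+3}{2}}e^{-\sqrt{\Lambda_k}|x|}\geq 0$ for $N\geq 3$, and compares $\pm\phi_k$ with $a\,u$ outside a ball containing $\Omega$; your version with $A\,G_{\Lambda_k}$ is equivalent but imports the two-sided $K_\nu$ asymptotics where an elementary supersolution suffices. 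For the lower bound, the obstacle you describe is not actually there: $G_{\Lambda_1}(\cdot-y_0)$ with pole $y_0\in\Omega\subset B_{R_0}$ is an \emph{exact} solution of $-\Delta w+\Lambda_1 w=0$ on $\rn\smallsetminus \overline{B_{R_0}}$, hence simultaneously a sub- and supersolution there, so a small multiple of it can be slipped below $\phi_1$ directly by the coercive maximum principle with the correct exponent $\sqrt{\Lambda_1}$ --- the $\delta$-perturbation and the subsequent exterior-Green-function/Hopf repair are unnecessary. The paper's own lower bound is even more direct: after rescaling so that $\Lambda_1=1$, it rewrites the equation as $-\Delta\phi_1+\phi_1=2\,\mathds 1_\Omega\,\phi_1$ and uses the whole-space representation $\phi_1(x)=2\int_\Omega G(x,y)\phi_1(y)\,\mathrm{d}y\geq 2|\Omega|\bigl(\inf_\Omega\phi_1\bigr)\inf_{y\in\Omega}G(x,y)$ together with the lower asymptotics of $K_{\frac{N-2}{2}}$ from \cite{MR634248}. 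Your final route via the exterior Green function can be made rigorous, but note that the Poisson-type representation on the exterior domain involves the normal derivative of the Green function weighted by the (positive) boundary values of $\phi_1$, not the normal derivative of $\phi_1$ itself, so the role you assign to Hopf's lemma is misplaced; what you actually need is $\min_{\partial B_{R_0}}\phi_1>0$, which follows from continuity and Theorem \ref{thm:existence of eigenvalues}$(iii)$.
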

\begin{proof}
Let $R>0$ be such that $\Omega \subset B_R(0)$. Then,
$$\Delta\phi_k(x)=-\Lambda_k Q_\o(x)\phi_k(x)=\Lambda_k \phi_k(x)\quad\text{for all \ }x\in\rn\smallsetminus B_R(0).$$
Setting $h(r):=r^{-\frac{N-1}{2}}e^{-\sqrt{\Lambda_k}r}$ and $u(x):=h(|x|)$ for $|x|=r>0$, we see that
\begin{align*}
-\Delta u(x) + \Lambda_k u(x)=-h''(r)-\frac{N-1}{r}h'(r)+\Lambda_k h(r)=\frac{(N-3) (N-1)}{4}  e^{-\sqrt{\Lambda_k} r} r^{-\frac{N+3}{2}}\geq 0.
\end{align*}
Fix $a>0$ such that $w(x):=a u(x)\pm \phi_k(x)>0$ on $\partial B_R(0).$  As $-\Delta w + \Lambda_k w\geq 0$ in $\rn\smallsetminus B_R(0)$, we have, by the maximum principle,  that $w>0$ in $\rn\smallsetminus B_R(0)$, which yields \eqref{fe}.

For the sharp estimate, assume first that $\Lambda_1=1.$ The Green function for the operator $-\Delta+1$ in $\rn$ is given by $G(x,y)=|x-y|^{-\frac{N-2}{2}}K_{\frac{N-2}{2}}(|x-y|)$, where $K_\nu$ is the modified Bessel function of order $\nu$; see \cite[Section 4]{MR634248}.  Since $-\Delta \phi_1 + \phi_1 = 2\phi_1 \mathds 1_\Omega$ in $\rn$, where $\mathds 1_\Omega$ is the characteristic function of $\o$, we have that 
\begin{align*}
\phi_1(x)=2\int_\Omega G(x,y)\phi_1(y)\d y\geq 2|\Omega|\left(\inf_\Omega \phi_1\right) \inf_{y\in \Omega}G(x,y).
\end{align*}
The claim for $\Lambda_1=1$ now follows from the asymptotic expansions for $K_{\frac{N-2}{2}}$ given in \cite[Appendix C]{MR634248}. The estimate for general $\Lambda_1>0$ follows by rescaling. 
\end{proof}

\subsubsection{Nodal domains}

Next we give a bound for the number of nodal domains of $\phi_k$ (the connected components of $\rn\smallsetminus\phi_k^{-1}(0)$).

\begin{proposition}\label{nd:prop}  
\( \phi_k \) has at most \(k\) nodal domains.  
\end{proposition}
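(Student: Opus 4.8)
The plan is to run the classical Courant nodal-domain argument, adapted to the sign-changing weight $Q_\o$ and to the fact that $\Lambda_k$ is defined here by a \emph{constrained} minimization on $\cS_\o$ rather than by an unconstrained Rayleigh quotient. For $k=1$ the statement is immediate from Theorem \ref{thm:existence of eigenvalues}$(iii)$, since $\phi_1$ has no zeros, so I assume $k\geq 2$ and argue by contradiction: suppose $\phi_k$ has at least $k+1$ nodal domains, pick $k+1$ distinct ones $D_1,\dots,D_{k+1}$, and set $u_j:=\phi_k\,\mathds 1_{D_j}$. Since $\phi_k\in\cC^{1,\alpha}_{loc}(\rn)$ is continuous and vanishes on $\partial D_j\subset\phi_k^{-1}(0)$ (each $D_j$ being clopen in the open set $\{\phi_k\neq 0\}$), one checks, exactly as for $\phi_1^+$ in the proof of Theorem \ref{thm:existence of eigenvalues}$(iii)$, that $u_j\in H^1(\rn)$ with weak gradient $\mathds 1_{D_j}\nabla\phi_k$. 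The functions $u_j$ are nonzero with pairwise disjoint supports, hence linearly independent, and $\langle u_i,u_j\rangle=0=\irn Q_\o u_iu_j$ for $i\neq j$.

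Next I would establish the ``eigenvalue identity on each piece''. Because $\phi_k\in H^1(\rn)$ and $Q_\o\phi_k\in L^2(\rn)$, the weak formulation of \eqref{P_lambdaI} extends by density of $\cC^\infty_c(\rn)$ in $H^1(\rn)$ to all test functions in $H^1(\rn)$; testing with $u_j$ gives $\|u_j\|^2=\langle\phi_k,u_j\rangle=\Lambda_k\irn Q_\o\phi_ku_j=\Lambda_k\irn Q_\o u_j^2$, and since $u_j\neq 0$ forces $\|u_j\|^2>0$ while $\Lambda_k>0$, we get $\irn Q_\o u_j^2=\|u_j\|^2/\Lambda_k>0$. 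Now, inside the $(k+1)$-dimensional space $\mathrm{span}\{u_1,\dots,u_{k+1}\}$, imposing $D^{1,2}(\rn)$-orthogonality to $\phi_1,\dots,\phi_{k-1}$ together with the vanishing of the $u_{k+1}$-coefficient (i.e.\ $v\equiv 0$ on $D_{k+1}$) amounts to $k$ homogeneous linear conditions, so there is $v=\sum_{j=1}^{k}c_ju_j\neq 0$ satisfying them. By the orthogonality relations and the identity above, $\|v\|^2=\sum_jc_j^2\|u_j\|^2>0$ and $\irn Q_\o v^2=\sum_jc_j^2\irn Q_\o u_j^2=\|v\|^2/\Lambda_k>0$, so $\widehat v:=\big(\irn Q_\o v^2\big)^{-1/2}v\in\cS_\o$; since moreover $\widehat v\in V_k$, the variational characterization of $\Lambda_k$ in Theorem \ref{thm:existence of eigenvalues} yields $\Lambda_k\leq\|\widehat v\|^2=\|v\|^2/\irn Q_\o v^2=\Lambda_k$, so $\widehat v$ attains the infimum defining $\Lambda_k$ over $V_k\cap\cS_\o$.

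Then I would upgrade this to a genuine eigenfunction equation. By the first-order condition obtained in the proof of Lemma \ref{lem:existence}, the minimizer $\widehat v$ satisfies $\langle\widehat v,w\rangle=\Lambda_k\irn Q_\o\widehat v\,w$ for all $w\in V_k\cap H^1(\rn)$. Since $(\Lambda_i,\phi_i)$ solves \eqref{P_lambdaI} and $\widehat v\in H^1(\rn)$, we have $0=\langle\widehat v,\phi_i\rangle=\langle\phi_i,\widehat v\rangle=\Lambda_i\irn Q_\o\phi_i\widehat v$, hence $\irn Q_\o\phi_i\widehat v=0$ for $i=1,\dots,k-1$; writing an arbitrary $w\in H^1(\rn)$ as $w=\sum_{i<k}c_i\phi_i+w'$ with $w'\in V_k\cap H^1(\rn)$ then shows the identity holds for all $w\in H^1(\rn)$, in particular for all $w\in\cC^\infty_c(\rn)$, so $(\Lambda_k,\widehat v)$ solves \eqref{P_lambdaI}. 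Thus $\widehat v$ is a nontrivial solution of $-\Delta\widehat v+V\widehat v=0$ in $\rn$ with $V:=-\Lambda_k Q_\o\in L^\infty(\rn)$ (so $\widehat v\in\cC^{1,\alpha}_{loc}(\rn)$ by elliptic regularity) that vanishes on the nonempty open set $D_{k+1}$. By the unique continuation property — a solution of a second-order elliptic equation with bounded lower-order coefficients that vanishes on an open subset of a connected domain vanishes identically — we get $\widehat v\equiv 0$, contradicting $\widehat v\neq 0$. This proves $\phi_k$ has at most $k$ nodal domains.

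The hard part, and the place where the indefinite weight really matters, is the passage from ``$\widehat v$ minimizes $\|\cdot\|^2$ over $V_k\cap\cS_\o$'' to ``$(\Lambda_k,\widehat v)$ solves \eqref{P_lambdaI} on all of $\rn$'': the minimizer need not be $\pm\phi_k$ (the eigenspace $E_k$ may have dimension $>1$ for $k\geq 2$), so one genuinely needs the first-order condition of Lemma \ref{lem:existence} together with the extension of the weak identity to all $H^1(\rn)$ test functions, and then the classical unique continuation input to convert vanishing on an open set into a contradiction. The only other point that should not be skipped — though it is routine — is the verification that $u_j=\phi_k\,\mathds 1_{D_j}\in H^1(\rn)$ when the nodal set is merely closed; everything else is the familiar dimension count.
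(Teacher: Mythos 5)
Your proof is correct and follows essentially the same route as the paper's: restrict $\phi_k$ to $k$ of its nodal domains, use the identity $\|u_j\|^2=\Lambda_k\irn Q_\o u_j^2>0$ on each piece, a dimension count to produce a nonzero combination in $V_k\cap\cS_\o$ attaining $\Lambda_k$, upgrade the minimizer to a solution of \eqref{P_lambdaI} via the first-order condition of Lemma \ref{lem:existence}, and conclude by unique continuation from the open set where it vanishes. The only difference is expository: you spell out in more detail the step from constrained minimizer to genuine eigenfunction, which the paper states more tersely.
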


\begin{proof}
Assume that $\phi_k$ has $m$ nodal domains and that $m\geq k\geq 2$. Choose $k$ nodal domains $U_1,\ldots,U_k$ and define
\[
w_i(x) := 
\begin{cases} 
\phi_k(x) & \text{if } x \in U_i, \\ 
0 & \text{if } x \in \rn \smallsetminus U_i.
\end{cases}
\]
Then, $w_i\in D^{1,2}(\rn)$,
\begin{equation} \label{eq:nodal1}
0<\|w_i\|^2=\langle \phi_k,w_i\rangle = \Lambda_k\irn Q_\o\phi_k w_i=\Lambda_k\irn Q_\o w_i^2\qquad\text{for each \ }i=1,\ldots,k,
\end{equation}
and $\langle w_i,w_j\rangle=0$ if $i\neq j$. Hence, the linear subspace generated by $w_1,\ldots,w_k$ has dimension $k$ and, as a consequence, there exists $w=c_1w_1+\cdots+c_kw_k\neq 0$, $c_i\in\r$, such that 
\begin{equation*}
\langle w,\phi_j\rangle =0\qquad\text{for every \ }j=1,\ldots, k-1.
\end{equation*}
It follows from \eqref{eq:nodal1} that
$$\irn Q_\o w^2 = \sum_{i=1}^k\int_{U_i}Q_\o w^2= \sum_{i=1}^k c_i^2\irn Q_\o w^2_i>0.$$
Setting $t:=\Big(\irn Q_\o w^2\Big)^{-1/2}$ and $u:=tw$ we have that
$$u\in V_k\cap\cS_\o,$$
where $V_k$ is the orthogonal complement in $D^{1,2}(\rn)$ of $\mathrm{span}\{\phi_1,\ldots,\phi_{k-1}\}$, and from \eqref{eq:nodal1} we get
$$\|u\|^2=\sum_{i=1}^k\|tc_i w_i\|^2=\sum_{i=1}^k\Lambda_k\irn Q_\o(tc_i w_i)^2=\Lambda_k \irn Q_\o u^2.$$
This implies that $u$ solves the eigenvalue problem \eqref{P_lambdaI} with $\Lambda=\Lambda_k$.

If $m>k$, then $A:=\rn\smallsetminus (U_1\cup\cdots\cup U_k)$ has nonempty interior. As $u$ vanishes in $A$, the unique continuation principle \cite[Theorem 1.2]{gl} leads to a contradiction. Therefore, $m\leq k$, as claimed.
\end{proof}

\begin{lemma}\label{schlem}
$\phi_k$ changes sign for every $k\geq 2$.
\end{lemma}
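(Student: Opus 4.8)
The statement to prove is that $\phi_k$ changes sign for every $k\geq 2$. The hint in the paper points to Picone's identity, and that is indeed the approach I would take. The plan is to argue by contradiction: suppose $\phi_k$ does not change sign, so (after possibly replacing $\phi_k$ by $-\phi_k$) we may assume $\phi_k\geq 0$ in $\rn$. We already know from Theorem \ref{thm:existence of eigenvalues}$(iii)$ that the first eigenfunction satisfies $\phi_1>0$ in $\rn$. The orthogonality relation $\langle\phi_k,\phi_1\rangle=0$, which by \eqref{eq:solution2} also gives $\irn Q_\o\phi_k\phi_1=0$, will be the relation to contradict, but — as the paper emphasizes — this is not immediate because $Q_\o$ changes sign, so a nonnegative $\phi_k$ need not be $L^2$-orthogonal to the positive $\phi_1$ with respect to the indefinite weight. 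This is where Picone's inequality enters.

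First I would recall Picone's identity (or the associated inequality): for suitable nonnegative $u$ and positive $v$,
\[
\irn|\nabla u|^2 \;\geq\; \irn \frac{u^2}{v}\,(-\Delta v),
\]
valid in a weak sense whenever both sides make sense (e.g. $u\in D^{1,2}(\rn)$, $v$ a positive solution of an elliptic equation, with the standard density/truncation argument to justify using $u^2/v$ as a test function). I would apply this with $u=\phi_k$ and $v=\phi_1$. Since $-\Delta\phi_1=\Lambda_1 Q_\o\phi_1$, the right-hand side becomes $\Lambda_1\irn Q_\o \phi_k^2$. On the other hand, testing the equation for $\phi_k$ with $\phi_k$ itself gives $\|\phi_k\|^2=\Lambda_k\irn Q_\o\phi_k^2$. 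Combining, and using $\irn|\nabla\phi_k|^2=\|\phi_k\|^2$, I get
\[
\Lambda_k\irn Q_\o\phi_k^2 \;=\; \|\phi_k\|^2 \;\geq\; \Lambda_1\irn Q_\o\phi_k^2.
\]
Now the normalization $\irn Q_\o\phi_k^2=1>0$ forces $\Lambda_k\geq\Lambda_1$, which is not yet a contradiction — so I need the strict/equality-case analysis of Picone: equality in Picone's inequality holds only when $u$ is a scalar multiple of $v$, i.e. $\phi_k=c\,\phi_1$ for some constant $c$. But $\phi_k$ is orthogonal to $\phi_1$ in $D^{1,2}(\rn)$ and $\phi_k\neq 0$, so $c=0$, a contradiction. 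If instead the inequality is strict, then $\Lambda_k>\Lambda_1$ still isn't contradictory on its own; the cleaner route is: strictness gives $\|\phi_k\|^2>\Lambda_1\irn Q_\o\phi_k^2=\Lambda_1$, but we may also run Picone with $u=\phi_1$, $v=\phi_k+\delta$ or more directly compare via the variational characterization — actually the sharpest formulation is to note that equality must hold because $\phi_k$ minimizes $\|\cdot\|^2$ over $V_k\cap\cS_\o$ and a Picone-strict inequality would let us produce a competitor in $\cS_\o$ (or in the span $\{\phi_1,\phi_k\}$) with strictly smaller energy than $\Lambda_1$, contradicting the definition of $\Lambda_1$ as the global infimum over $\cS_\o$. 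Either way the equality case of Picone pins down $\phi_k\parallel\phi_1$, which is impossible.

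The main obstacle I anticipate is the rigorous justification of Picone's inequality in this non-compact $D^{1,2}(\rn)$ setting with an indefinite weight: one must verify that $\phi_k^2/\phi_1$ is an admissible test function (it need not a priori lie in $H^1$), which requires knowing that $\phi_1$ is bounded below on compact sets and decays no faster than $\phi_k$ at infinity — here the sharp two-sided decay estimates of Theorem \ref{decay:thm}, namely $|\phi_k|\lesssim |x|^{-(N-1)/2}e^{-\sqrt{\Lambda_k}|x|}$ together with the matching lower bound $\phi_1\gtrsim |x|^{-(N-1)/2}e^{-\sqrt{\Lambda_1}|x|}$ and $\Lambda_k\geq\Lambda_1$, guarantee that $\phi_k^2/\phi_1$ decays like $|x|^{-(N-1)/2}e^{-(2\sqrt{\Lambda_k}-\sqrt{\Lambda_1})|x|}$, which is integrable against $|\nabla\phi_1|^2$ and lies in the right space. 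I would handle this via the standard truncation $\phi_1\wedge n$ or $\phi_1+\eps$, derive Picone on the truncated version, and pass to the limit using these decay bounds and dominated convergence, then conclude as above.
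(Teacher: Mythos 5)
There is a genuine gap: you apply Picone's inequality in the wrong direction. Taking $u=\phi_k$ and $v=\phi_1$ yields
\[
\Lambda_k=\irn|\nabla\phi_k|^2\;\geq\;\irn\frac{\phi_k^2}{\phi_1}\,(-\Delta\phi_1)=\Lambda_1\irn Q_\o\phi_k^2=\Lambda_1,
\]
which is a true statement (indeed $\Lambda_k\geq\Lambda_2>\Lambda_1$ by Theorem \ref{thm:existence of eigenvalues}$(iv)$) and therefore produces no contradiction. Your attempted rescue via the equality case of Picone does not close this gap: equality need not hold, and the strict inequality $\Lambda_k>\Lambda_1$ is perfectly consistent with the variational characterizations, since $\Lambda_1$ is the infimum over all of $\cS_\o$ while $\Lambda_k$ is the infimum over the smaller set $V_k\cap\cS_\o$. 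In particular there is no mechanism for producing a competitor in $\cS_\o$ with energy strictly below $\Lambda_1$, and the claim that minimality of $\phi_k$ on $V_k\cap\cS_\o$ forces equality in Picone is unfounded.

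The correct application --- which you mention in passing (``run Picone with $u=\phi_1$, $v=\phi_k+\delta$'') but then abandon --- is the reverse one, and it is the route the paper takes. First upgrade $\phi_k\geq0$ to $\phi_k>0$ via the strong maximum principle applied to $-\Delta\phi_k+\Lambda_k\phi_k\geq0$ outside $\o$ and $-\Delta \phi_k\ge 0$ in $\o$ (this strict positivity is needed for the limit below). Then test the equation for $\phi_k$ with $\psi_\varepsilon:=\phi_1^2/(\phi_k+\varepsilon)$, which lies in $H^1(\rn)$ simply because $\psi_\varepsilon\leq\phi_1^2/\varepsilon$ and $\phi_1$ decays exponentially --- so the delicate comparison of decay rates of $\phi_k$ and $\phi_1$ that you worry about is not needed. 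Picone's identity gives
\[
\Lambda_k\irn Q_\o\,\frac{\phi_k}{\phi_k+\varepsilon}\,\phi_1^2=\irn\nabla(\phi_k+\varepsilon)\cdot\nabla\Bigl(\frac{\phi_1^2}{\phi_k+\varepsilon}\Bigr)\;\leq\;\irn|\nabla\phi_1|^2=\Lambda_1,
\]
and letting $\varepsilon\to0$, dominated convergence and the normalization $\irn Q_\o\phi_1^2=1$ yield $\Lambda_k\leq\Lambda_1$, contradicting $\Lambda_k\geq\Lambda_2>\Lambda_1$. The asymmetry between the two directions comes precisely from the indefinite weight: only the choice $u=\phi_1$ lets you exploit the known normalization $\irn Q_\o\phi_1^2=1$ on the side of the inequality where it turns into an upper bound for $\Lambda_k$.
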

\begin{proof}
We use Picone's identity which says that, for any $u,v\in \cC^{1}(\rn)$ with $v>0$ and $u\geq 0$,
\begin{align}\label{picone}
    |\nabla u|^2-\nabla\left(\frac{u^2}{v}\right)\cdot\nabla v=|\nabla u|^2+\frac{u^2}{v^2}|\nabla v|^2-2\frac{u}{v}\nabla u\cdot\nabla v\geq 0,
\end{align}
(see, for instance, \cite[Ec. (1)]{picone}).

Assume, by contradiction, that $\phi_k\geq0$ for some $k\geq 2$. Since $-\Delta \phi_k+ \Lambda_k\mathds 1_{\rn \ssm \Omega}\phi_k= \Lambda_k\mathds 1_{\Omega}\phi_k\geq 0$ in $\rn$, the strong maximum principle \cite[Theorem 9.6]{gt} yields that $\phi_k>0$ in $\rn$. For $\varepsilon>0$, let $\psi_\varepsilon=\frac{\phi_1^2}{\phi_k+\varepsilon}$. Note that $\psi_\varepsilon\in H^{1}(\rn)$. Indeed, by Theorem~\ref{thm:existence of eigenvalues}, $\psi_\varepsilon\in \cC^{1}(\rn)$. Moreover, $0<\psi_\varepsilon<\frac{\phi_1^2}{\varepsilon}$ and \begin{align*}|\nabla\psi_\varepsilon|=\left|\frac{2\phi_1\nabla\phi_1}{\phi_k+\varepsilon}-\frac{\phi_1^2\nabla \phi_k}{(\phi_k+\varepsilon)^2}\right|\leq 2\varepsilon^{-1}\phi_1|\nabla\phi_1|+\varepsilon^{-2}\phi_1^2|\nabla \phi_k|\in L^2(\rn),
\end{align*}
by Theorem~\ref{decay:thm} and Hölder's inequality. Thus, testing with $\psi_\epsilon$ and using \eqref{picone}, 
\begin{align*}
\Lambda_k\irn Q_\o \phi_k\psi_\varepsilon=
 \irn\nabla \phi_k\cdot\nabla\psi_\varepsilon
 =\irn\nabla(\phi_k+\epsilon)\cdot\nabla\left(\frac{\phi_1^2}{\phi_k+\epsilon}\right)\leq\irn|\nabla \phi_1|^2=\Lambda_1.    
\end{align*}
Since $\psi_\eps \phi_k=\frac{\phi_k}{\phi_k+\epsilon}\phi_1^2\to\phi_1^2$ pointwisely as $\varepsilon\to0$ and $\frac{\phi_k}{\phi_k+\epsilon}\phi_1^2\leq\phi_1^2$ for every $\varepsilon>0$, dominated convergence implies that $\Lambda_k\leq\Lambda_1$,
which contradicts Theorem~\ref{thm:existence of eigenvalues}.
\end{proof}

\subsubsection{Eigenfunctions, eigenvalues, and symmetries}\label{eigen:sec}

We now prove the Faber-Krahn type inequality stated in the introduction.

\begin{theorem}[Faber-Krahn inequality]\label{Faber-Krahn}
For every $\Omega\subset \rn$ open bounded set, it holds that 
\begin{align}\label{Faber}
    \Lambda_1(\Omega)\geq\Lambda_1(\Omega^{*}),
\end{align}
where $\o^*$ is the ball centered at the origin with the same volume as $\o$. Moreover, this inequality is an equality if and only if $\Omega$ is a ball.
\end{theorem}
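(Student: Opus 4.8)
The plan is to combine the variational characterisation $\Lambda_1(\o)=\|\phi_1\|^2$ from Theorem~\ref{thm:existence of eigenvalues} with Schwarz symmetrisation. Throughout, $\phi_1>0$ denotes the first eigenfunction normalised by $\irn Q_\o\phi_1^2=1$, and for a nonnegative $u\in D^{1,2}(\rn)$ we write $u^*$ for its spherically symmetric decreasing rearrangement (well defined since $D^{1,2}(\rn)\hookrightarrow L^{2^*}(\rn)$, so $|\{u>t\}|<\infty$ for $t>0$). The only algebraic identity we use is $Q_\o=2\mathds 1_\o-1$, giving $\irn Q_\o v^2=2\io v^2-|v|_2^2$ for every $v\in H^1(\rn)$, and likewise for $\o^*$. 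I would first record the three standard rearrangement facts for $\phi_1$: (a) $\phi_1^*\in D^{1,2}(\rn)$ with $\|\nabla\phi_1^*\|\leq\|\nabla\phi_1\|$ (Pólya–Szegő); (b) $|\phi_1^*|_2=|\phi_1|_2$ (equimeasurability); (c) $\int_{\o^*}(\phi_1^*)^2\geq\io\phi_1^2$, which is the Hardy–Littlewood inequality $\irn fg\leq\irn f^*g^*$ for $f=\phi_1^2$, $g=\mathds 1_\o$, using $(\phi_1^2)^*=(\phi_1^*)^2$ and $(\mathds 1_\o)^*=\mathds 1_{\o^*}$ (the latter since $|\o^*|=|\o|$). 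Combining (b) and (c),
\[
c^2:=\irn Q_{\o^*}(\phi_1^*)^2=2\int_{\o^*}(\phi_1^*)^2-|\phi_1^*|_2^2\geq 2\io\phi_1^2-|\phi_1|_2^2=1>0,
\]
so $\psi:=\phi_1^*/c\in\cS_{\o^*}$ is admissible and, by (a),
\[
\Lambda_1(\o^*)\leq\|\psi\|^2=c^{-2}\|\nabla\phi_1^*\|^2\leq\|\nabla\phi_1^*\|^2\leq\|\nabla\phi_1\|^2=\Lambda_1(\o),
\]
which proves \eqref{Faber}.

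For the rigidity statement, suppose $\Lambda_1(\o)=\Lambda_1(\o^*)$. Then every inequality in the last display is an equality; since $\|\nabla\phi_1^*\|>0$ this forces $c=1$ — hence equality in the Hardy–Littlewood step, i.e. $\int_{\o^*}(\phi_1^*)^2=\io\phi_1^2$ — and $\|\nabla\phi_1^*\|=\|\nabla\phi_1\|$. To exploit the latter I would invoke the Brothers–Ziemer characterisation of equality in the Pólya–Szegő inequality: it applies because $|\{\phi_1=t\}|=0$ for every $t$. Indeed, on each connected component of $\o$ and on $\rn\ssm\overline{\o}$ the function $\phi_1$ solves a constant-coefficient elliptic equation ($-\Delta\phi_1=\pm\Lambda_1\phi_1$), hence is real-analytic there, and is nonconstant (it is positive, decays at infinity by Theorem~\ref{decay:thm}, and $\Lambda_1\phi_1\neq0$ rules out a positive constant on any component of $\o$); since real-analytic nonconstant functions have level sets of measure zero and $\partial\o$ is Lebesgue-null, the claim follows. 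Brothers–Ziemer then yields $\phi_1=\phi_1^*(\cdot-x_0)$ for some $x_0\in\rn$, and moreover $\phi_1$ (recentred at $x_0$) is \emph{strictly} radially decreasing. Translating $\o$ so that $x_0=0$, the equality $\io\phi_1^2=\int_{\o^*}\phi_1^2$ together with $|\o|=|\o^*|$ and the strict radial monotonicity of $\phi_1^2$ forces $\o=\o^*$ up to a Lebesgue-null set; since both sets are open, $\o=\o^*$, i.e. $\o$ is a ball. The converse is immediate, since $\Lambda_1$ is invariant under rigid motions.

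The step I expect to be the main obstacle is the rigidity part: verifying the Brothers–Ziemer hypothesis for $\phi_1$ (absence of plateaus at intermediate levels) and then turning the resulting strict radial monotonicity of $\phi_1$, via the equality case of the Hardy–Littlewood inequality, into the set identity $\o=\o^*$. One should also take minor care that the rearrangement inequalities are being applied in $D^{1,2}(\rn)$ rather than on a bounded domain.
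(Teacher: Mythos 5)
Your proof is correct and follows essentially the same route as the paper: Schwarz symmetrisation of $\phi_1$ combined with the Hardy--Littlewood and Pólya--Szegő inequalities for \eqref{Faber}, and the Brothers--Ziemer rigidity theorem for the equality case. If anything, your rigidity step is more complete than the paper's, which passes from $\irn (Q_{\o+\zeta}-Q_{\o^*})(\phi_1^*)^2=0$ directly to $Q_{\o+\zeta}=Q_{\o^*}$ without the strict-radial-monotonicity and equal-measure argument that you supply (and without checking the no-plateau hypothesis of Brothers--Ziemer, which your real-analyticity observation handles).
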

\begin{proof}
Let $\phi_1$ be the positive eigenfunction with eigenvalue $\Lambda_1$ for the problem \eqref{P_lambdaI} in $\o$ that satisfies
$$\irn Q_\o \phi_1^2=1.$$
Let $\phi_1^*$ be the Schwarz symmetrization of $\phi_1$ (also called symmetric-decreasing rearrangement). As shown in \cite[Section 3.3]{ll},
$$\irn\phi_1^2=\irn(\phi_1^*)^2,$$
$(\mathds 1_\o)^*=\mathds 1_{\o^*}$ and $(\phi_1^2)^*=(\phi_1^*)^2$. Then, using \cite[Theorem 3.4]{ll} we see that
$$\io\phi_1^2\leq\int_{\o^*}(\phi_1^*)^2.$$
Therefore,
\begin{align*}
1=\irn Q_\o\phi_1^2=2\io\phi_1^2-\irn \phi_1^2 \leq 2\int_{\o^*}(\phi_1^*)^2-\irn(\phi_1^*)^2=\irn Q_{\o^*}(\phi_1^*)^2.
\end{align*}
By \cite[Lemma 7.17]{ll} we have that
\begin{align*}
\|\nabla \phi_1^{\ast}\|\leq\|\nabla\phi_1\|.
\end{align*}
As a consequence,
\begin{align*}
\Lambda_1=\irn|\nabla\phi_1|^2\geq\irn|\nabla\phi_1^*|^2\geq\frac{\irn|\nabla\phi_1^{\ast}|^2}{\irn Q_{\o^*}(\phi_1^*)^2}\geq\Lambda_1^{\ast},
\end{align*}
as claimed. 

If $\Lambda_1=\Lambda_1^*$, then $\|\nabla \phi_1^{\ast}\|=\|\nabla\phi_1\|$. As shown in \cite[Theorem 1.1]{bz}, this implies that $\phi_1$ is a translation of $\phi_1^*$, i.e., $\phi_1^*=\phi_1(\,\cdot\, -\zeta)$ for some $\zeta\in\rn$. Furthermore, a change of variable gives
$$\Lambda_1\irn Q_{\o+\zeta}(\phi_1^*)^2=\Lambda_1\irn Q_\o\phi_1^2=\|\nabla\phi_1\|^2=\|\nabla \phi_1^{\ast}\|^2=\Lambda_1^*\irn Q_{\o^*}(\phi_1^*)^2=\Lambda_1\irn Q_{\o^*}(\phi_1^*)^2.$$
This implies that $Q_{\o+\zeta}=Q_{\o^*}$. Hence, $\o+\zeta=\o^*$, i.e., $\o$ is a ball, as claimed.
\end{proof}

If $\o$ is radially symmetric, the first and second eigenfunctions have the expected symmetries. 

\begin{proposition} \label{phi1rs}
If $\o$ is a bounded open radially symmetric set, then $\phi_1$ is radially symmetric.
\end{proposition}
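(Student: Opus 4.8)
The plan is to exploit the simplicity of the first eigenvalue, $\dim(E_1)=1$ from Theorem \ref{thm:existence of eigenvalues}$(iv)$, together with the invariance of the problem under rotations. First I would observe that $\Lambda_1(\o)$ and the eigenvalue problem \eqref{P_lambdaI} are invariant under any orthogonal transformation that preserves $\o$: if $A\in O(N)$ satisfies $A\o=\o$, then $Q_\o(Ax)=Q_\o(x)$ for a.e. $x\in\rn$, and hence for the composition $\phi_1^A(x):=\phi_1(Ax)$ one has, after the change of variables $y=Ax$ (which has unit Jacobian and maps $\rn$ onto $\rn$), that $\phi_1^A\in\cS_\o$, $\|\phi_1^A\|^2=\|\phi_1\|^2=\Lambda_1$, and $(\Lambda_1,\phi_1^A)$ solves \eqref{P_lambdaI}. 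Therefore $\phi_1^A\in E_1$.

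Since $\dim(E_1)=1$ and $\phi_1\ne 0$, there is a scalar $\mu=\mu(A)\in\r\smallsetminus\{0\}$ with $\phi_1^A=\mu(A)\phi_1$. Applying $A$ twice and using $\phi_1>0$ (Theorem \ref{thm:existence of eigenvalues}$(iii)$) one sees $\mu(A)>0$ and $\mu(A^2)=\mu(A)^2$; more directly, evaluating $\irn Q_\o (\phi_1^A)^2=\mu(A)^2\irn Q_\o\phi_1^2$ together with the normalization \eqref{normalization} gives $\mu(A)^2=1$, hence $\mu(A)=1$ because $\phi_1>0$. Thus $\phi_1(Ax)=\phi_1(x)$ for every $A\in O(N)$ with $A\o=\o$. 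When $\o$ is radially symmetric, every rotation about the origin fixes $\o$, so $\phi_1$ is invariant under all of $O(N)$, i.e. $\phi_1$ is radially symmetric.

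I do not anticipate a serious obstacle here; the argument is a standard symmetrization-by-uniqueness. The one point requiring a little care is making sure the rotated function is admissible in the right function spaces, namely that $\phi_1^A\in D^{1,2}(\rn)\cap H^1(\rn)$ with the same norm — this is immediate since $|\nabla\phi_1^A|(x)=|\nabla\phi_1(Ax)|$ pointwise and the Lebesgue measure is rotation-invariant — and verifying that $\phi_1^A$ solves \eqref{P_lambdaI} weakly, which follows by testing against $\vp\circ A^{-1}$ for $\vp\in\cC_c^\infty(\rn)$ and changing variables, using $Q_\o\circ A=Q_\o$. Once admissibility and the eigenvalue equation are in place, the simplicity of $\Lambda_1$ closes the argument immediately.
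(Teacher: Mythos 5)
Your argument is correct and is essentially the same as the paper's: both exploit the rotation-invariance of \eqref{P_lambdaI} together with the simplicity of $\Lambda_1$ and the positivity of $\phi_1$ to conclude that the composed eigenfunction must equal $\phi_1$ itself (the paper works with reflections through hyperplanes containing the origin rather than general elements of $O(N)$, but this is an inessential variation). Your normalization argument pinning down $\mu(A)=1$ is a clean way to close the loop.
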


\begin{proof}
For any reflection $\vr$ on a hyperplane $V$ through the origin, the pair $(\Lambda_1,\phi_1\circ\vr)$  is a solution to \eqref{P_lambdaI}. Since $\dim(E_1)=1$, $\phi_1(\vr x)=\phi_1(x)$ for all $x\in V$ and $|\phi_1|>0$ in $\rn$, we have that $\phi_1\circ\vr= \phi_1$. As a consequence, since for any $x,y\in\rn$ such that $x\neq y$ and $|x|=|y|\neq 0$, the reflection on the hyperplane through the origin orthogonal to $x-y$ maps $x$ to $y$, we have that $\phi_1(x)=\phi_1(y)$. This proves that $\phi_1$ is radial.
\end{proof}

Next, we show that the eigenfunctions with eigenvalue $\Lambda_2$ are foliated Schwarz symmetric. Let us recall what this means. 

Let $\mathbb{S}^{N-1}=\lbrace x\in\rn\,:\,|x|=1\rbrace$ be the unit sphere. For each $e\in \mathbb{S}^{N-1}$ we consider the half-space $H(e):=\{x\in\rn\,:\,x\cdot e>0\}$, and we denote by $\phi_e$ the composition of the function $\phi:\rn\to\r$ with the reflection $\sigma_e$ with respect to the hyperplane $\partial H(e)$, that is 
\begin{align*}
    \phi_e:\rn\to\r\quad\text{is given by}\quad\phi_e(x):=\phi(\sigma_e(x)),\quad \text{where}\;\;\; \sigma_e(x)=x-2(x\cdot e)e.
\end{align*}
The polarization $\phi_H:\rn\to\r$ with respect to the half-space $H:=H(e)$ is defined by
\begin{align*}
    \phi_H:=\begin{cases}
        \max\{\phi,\phi_e\}&\text{in}\;\; \overline{H},\\
        \min\{\phi,\phi_e\}&\text{in}\;\; \rn\smallsetminus H.
    \end{cases}
\end{align*}

A function $u\in \cC^0(\o)$ is said to be \emph{foliated Schwarz symmetric with respect to} $e^{\ast}\in\mathbb{S}^{N-1}$ if $u$ is axially symmetric with respect to the axis $\r e^{\ast}:=\{te^{\ast}\,:\,t\in\r\}$ (i.e., it is invariant under rotations around this axis), and nonincreasing in the polar angle $\theta:=\arccos(\frac{x}{|x|}\cdot e^{\ast})\in\,[0,\pi]$.

We use the following characterization of foliated Schwarz symmetry.

\begin{lemma} \label{crit}
Let $\o\subset \rn$ be a radially symmetric open set. There exists $e^* \in \mathbb{S}^{N-1}$ such that $u \in \cC^0(\overline{\o})$ is foliated Schwarz symmetric with respect to $e^*$ if and only if for every $e \in \mathbb{S}^{N-1}$ either
\begin{align*}
u \geqslant u_e \quad \text {in } \o\cap H(e) \qquad \text { or } \qquad u \leqslant u_e \quad \text {in } \o\cap H(e). 
\end{align*}
\end{lemma}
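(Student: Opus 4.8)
# Proof Proposal for Lemma \ref{crit}

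The plan is to reduce this characterization to a known result for bounded domains by working on a large ball containing $\Omega$, or alternatively to prove it directly by the standard "rotating plane" argument adapted to the present setting. Since $\Omega$ is radially symmetric, say $\Omega \subset B_R(0)$, and the characterization concerns a continuous function $u \in \cC^0(\overline{\Omega})$, the radial symmetry of $\Omega$ means that the family of polarizations $u_H$ indexed by half-spaces $H = H(e)$ interacts with $\Omega$ in a rotationally compatible way: $\sigma_e(\Omega) = \Omega$ for every $e$. This is precisely the hypothesis under which the classical equivalence between foliated Schwarz symmetry and the "two-sided polarization inequality" holds; see for instance the treatment in the literature on symmetry via polarization (e.g. the work of Brock–Solynin and of Smets–Willem). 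Thus the first step I would take is to invoke, essentially verbatim, that known characterization, checking only that its hypotheses are met here.

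If instead a self-contained proof is wanted, I would argue as follows. First I would establish the "only if" direction: if $u$ is foliated Schwarz symmetric with respect to $e^*$, then for any $e \in \mathbb{S}^{N-1}$ one compares the polar angles $\theta(x) = \arccos(\tfrac{x}{|x|}\cdot e^*)$ and $\theta(\sigma_e(x))$; since $u$ is radial in $|x|$ and nonincreasing in $\theta$, whether $u \geq u_e$ or $u \leq u_e$ on $\Omega \cap H(e)$ is decided by the sign of $e \cdot e^*$, which is constant — this is a direct computation using $|\sigma_e(x)| = |x|$. For the "if" direction, the key step is to define, for each pair $e, e' \in \mathbb{S}^{N-1}$, the relation recording which inequality ($u \geq u_e$ or $u \leq u_e$) holds, show this assignment is continuous in $e$ by a connectedness argument on $\mathbb{S}^{N-1}$, and extract a distinguished direction $e^*$ such that $u \geq u_e$ exactly when $e \cdot e^* \geq 0$. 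One then shows $u = u_e$ whenever $e \perp e^*$, which forces axial symmetry about $\r e^*$, and the monotonicity in $\theta$ follows by applying the inequality $u \geq u_e$ to suitable nearby directions $e$.

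The main obstacle — and the only place where the unbounded setting or the weight $Q_\Omega$ could in principle intrude — is ensuring that the polarization machinery is insensitive to the fact that $u$ is only defined on $\overline{\Omega}$ rather than on all of $\rn$. Here the radial symmetry of $\Omega$ saves us: since $\sigma_e(\Omega) = \Omega$, the polarized function $u_H$ is again defined on $\overline{\Omega}$, and all comparisons in the statement take place on $\Omega \cap H(e)$, so no extension to $\rn$ is needed and the weight $Q_\Omega$ plays no role in this purely geometric statement. I expect the connectedness/continuity argument in the "if" direction to be the most delicate bookkeeping, but it is entirely standard once the rotational compatibility is in place; I would cite the relevant reference rather than reproduce it. Consequently the proof is short: verify the hypotheses of the classical characterization and apply it.
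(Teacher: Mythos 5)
Your proposal matches the paper's proof, which simply cites the classical characterization (Brock; Saldaña--Weth; Weth) after noting that its hypotheses are met in this rotationally invariant setting. The only point the paper adds that you omit is that the cited proofs assume $\o$ connected, and one must observe that the general case follows because each component of the radially symmetric set $\o$ satisfies the same inequalities, which determine the axis and the angular monotonicity.
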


\begin{proof}
The proof is given in \cite{b} (see also \cite[Proposition 3.2]{sw12} and \cite{we}). This proof only considers the case where $\Omega$ is connected, but the general case follows easily since each component satisfies the same inequalities that completely determine the axis of symmetry and angular monotonicity.
\end{proof}

We also use the following simple fact. 

\begin{lemma}\label{ort}
    Let $\varphi$ and $f$ Lebesgue-measurable functions in $\rn$ such that $\varphi$ is radially symmetric and $\irn|\varphi f|<\infty$, then $\irn\varphi f_H=\irn\varphi f.$
\end{lemma}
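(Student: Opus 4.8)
The plan is to exploit that the reflection $\sigma_e$ across $\partial H(e)$ is a measure-preserving involution of $\rn$ fixing the origin, so that $\varphi\circ\sigma_e=\varphi$ by radial symmetry, while at each pair $\{x,\sigma_e(x)\}$ the polarization merely permutes the two values $f(x)$ and $f_e(x)=f(\sigma_e(x))$. The identity then reduces to a change of variables.

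First I would record the pointwise fact underlying polarization: writing $H:=H(e)$, for every $x\in H$ one has
$$f_H(x)=\max\{f(x),f(\sigma_e(x))\},\qquad f_H(\sigma_e(x))=\min\{f(x),f(\sigma_e(x))\},$$
so in particular $f_H(x)+f_H(\sigma_e(x))=f(x)+f(\sigma_e(x))$. Before integrating I would check that $\varphi f_H\in L^1(\rn)$, which is needed to split the integral below: since $|f_H|\le|f|+|f\circ\sigma_e|$ and, by the change of variables $y=\sigma_e(x)$ together with $\varphi\circ\sigma_e=\varphi$, $\irn|\varphi|\,|f\circ\sigma_e|=\irn|\varphi f|<\infty$, the hypothesis $\irn|\varphi f|<\infty$ gives $\varphi f_H\in L^1(\rn)$.

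Then I split $\irn\varphi f_H=\int_H\varphi f_H+\int_{\rn\ssm\overline H}\varphi f_H$ (the hyperplane $\partial H$ is Lebesgue-null), apply $y=\sigma_e(x)$ to the second integral and use $\varphi\circ\sigma_e=\varphi$ to rewrite it as $\int_H\varphi(y)\min\{f(y),f(\sigma_e(y))\}\,\d y$; adding, the max and min recombine to give $\irn\varphi f_H=\int_H\varphi(x)\bigl(f(x)+f(\sigma_e(x))\bigr)\d x$. A last change of variables $y=\sigma_e(x)$ in $\int_H\varphi(x)f(\sigma_e(x))\,\d x$ turns it into $\int_{\rn\ssm\overline H}\varphi f$, so that $\irn\varphi f_H=\int_H\varphi f+\int_{\rn\ssm\overline H}\varphi f=\irn\varphi f$. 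There is no genuine obstacle here; the only points requiring care are the integrability justification for splitting the integral and keeping track of which summand is the max and which the min on each side of $\partial H$, both of which are handled by the measure-preserving involution $\sigma_e$ and the radial invariance of $\varphi$.
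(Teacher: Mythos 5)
Your proof is correct and follows essentially the same idea as the paper's: both reduce the identity to the fact that $\sigma_e$ is a measure-preserving involution under which $\varphi$ is invariant. The paper decomposes $\rn$ into $M=\{x:f(x)=f_H(x)\}$ and its complement (where $f_H=f\circ\sigma_e$) rather than into $H$ and $\rn\ssm\overline H$, but this is a cosmetic difference; your additional verification that $\varphi f_H\in L^1(\rn)$ is a welcome detail that the paper leaves implicit.
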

\begin{proof}
    Let $M=\{x\in\rn\,:\;f(x)=f_H(x)\}$. Notice that $\sigma_e(M)=M$ and $f_H=f\circ\sigma_e$ in $M^{c}$, then \begin{align*}\irn \varphi f_H=\int_M \varphi f+\int_{\rn\smallsetminus M}\varphi (f\circ \sigma_e)=\irn \varphi f\end{align*} as claimed.    
\end{proof}

The following result describes the symmetry of the second eigenfunctions. 

\begin{proposition}\label{phi2fss}
Let $\Omega$ be a radially symmetric open bounded subset of $\rn$. Then any eigenfunction of \eqref{P_lambdaI} with eigenvalue $\Lambda_2$ is foliated Schwarz symmetric.
\end{proposition}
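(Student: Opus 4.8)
The plan is to use polarizations together with the variational characterization of $\Lambda_2$ and the characterization of foliated Schwarz symmetry in Lemma~\ref{crit}. Let $\phi$ be an eigenfunction with eigenvalue $\Lambda_2$, normalized so that $\irn Q_\o\phi^2=1$, and fix an arbitrary $e\in\mathbb{S}^{N-1}$ with associated half-space $H=H(e)$. First I would record the basic properties of the polarization $\phi_H$: since polarization is a rearrangement that preserves the Dirichlet integral, $\|\nabla\phi_H\|=\|\nabla\phi\|=\Lambda_2$, and since $|\phi_H|$ and $|\phi|$ are equimeasurable and the weight $Q_\o$ is radial (here $\o$ radial is essential), Lemma~\ref{ort} applied with $\varphi=\mathds{1}_\o$ and $f=\phi^2$, and with $\varphi=\mathds{1}_{\rn\ssm\o}$, gives $\irn Q_\o\phi_H^2=\irn Q_\o\phi^2=1$. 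Hence $\phi_H\in\cS_\o$.

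Next I would bring in the first eigenfunction. Since $\o$ is radial, Proposition~\ref{phi1rs} gives that $\phi_1$ is radial, so by Lemma~\ref{ort} the polarization $\phi_H$ still satisfies the orthogonality constraint $\irn Q_\o\phi_1\phi_H$; more precisely, the correct statement is that $\phi_H$ lies (after a harmless sign choice) in a suitable test class. The standard trick is: among $\phi_H$ and $(-\phi)_H=-\phi^{\sigma_e}$-type polarizations one can arrange a function $\psi\in V_2\cap\cS_\o$ (i.e. orthogonal to $\phi_1$ in $D^{1,2}$ and on $\cS_\o$) with $\|\nabla\psi\|^2=\Lambda_2$; concretely, since $\langle\phi_H,\phi_1\rangle$ need not vanish one instead works with the two-dimensional space $\mathrm{span}\{\phi_H,\phi_1\}$, or invokes the Courant–Fischer characterization $\Lambda_2=\min\{\|\nabla v\|^2: v\in\cS_\o,\ \langle v,\phi_1\rangle=0\}$ together with the fact that polarization does not increase the Dirichlet energy of $\phi-\langle\phi,\phi_1\rangle$-type combinations. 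The upshot I want is that $\phi_H$ is again a minimizer, hence a solution of \eqref{P_lambdaI} with $\Lambda=\Lambda_2$, so $(\Lambda_2,\phi_H)$ solves the eigenvalue problem for \emph{every} half-space $H$.

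Once $\phi_H$ is known to be an eigenfunction for $\Lambda_2$ for every $H$, I would apply a strong-maximum-principle / unique-continuation dichotomy to the difference $\phi-\phi_H$ (respectively $\phi_e-\phi$): on $H$, $\phi_H=\max\{\phi,\phi_e\}\geq\phi$, and $w:=\phi_H-\phi\geq 0$ in $H$ satisfies a linear equation of the form $-\Delta w+\Lambda_2\mathds{1}_{\rn\ssm\o}w=\Lambda_2\mathds{1}_\o w$ (using $Q_\o$ radial so both $\phi$ and $\phi_H$ solve the same equation). By the strong maximum principle for strong solutions \cite[Theorem~9.6]{gt} together with the unique continuation principle \cite[Theorem~1.2]{gl}, either $w\equiv 0$ in $H$ (so $\phi\geq\phi_e$ there, equivalently $\phi=\phi_H$) or $w>0$ in $H$; and similarly on $\rn\ssm H$. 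Combining these, for every $e\in\mathbb{S}^{N-1}$ either $\phi\geq\phi_e$ in $H(e)$ or $\phi\leq\phi_e$ in $H(e)$. By Lemma~\ref{crit} this is exactly foliated Schwarz symmetry of $\phi$ with respect to some $e^*$.

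The main obstacle I expect is the middle step: verifying that the polarized function $\phi_H$ really lands in the correct constrained minimization class $V_2\cap\cS_\o$ and attains $\Lambda_2$, despite the sign-changing weight $Q_\o$ and the fact that polarization interacts nontrivially with the orthogonality constraint $\langle\,\cdot\,,\phi_1\rangle=0$. The radial symmetry of $\o$ (hence of $Q_\o$ and of $\phi_1$) is what rescues this via Lemma~\ref{ort}, but one must be careful that $\phi_H\in D^{1,2}(\rn)$ and that the constraint $\irn Q_\o\phi_H^2=1$ is genuinely preserved — the naive ``$\io\phi_H^2=\io\phi^2$'' is false for polarizations (only the full-space integral and equimeasurability survive), so the preservation of $\irn Q_\o\phi_H^2$ must be argued exactly as in Lemma~\ref{ort}, splitting $\rn=\o\cup(\rn\ssm\o)$ and using that each piece is radial. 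A secondary technical point is ensuring the comparison functions are in the right regularity class ($\phi,\phi_H\in\cC^{1,\alpha}_{loc}$ by Theorem~\ref{thm:existence of eigenvalues}(ii)) so that \cite[Theorem~9.6]{gt} and \cite[Theorem~1.2]{gl} apply to $w=\phi_H-\phi$.
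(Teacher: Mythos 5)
Your proposal is correct and follows essentially the same route as the paper: polarize $\phi$, use the radial symmetry of $\o$ (hence of $Q_\o$ and $\phi_1$) together with Lemma~\ref{ort} and the variational characterization to see that $\phi_H$ is again an eigenfunction for $\Lambda_2$, then apply the strong maximum principle to the difference and conclude via Lemma~\ref{crit}. The one step you hedge on is unnecessary: $\langle\phi_H,\phi_1\rangle$ \emph{does} vanish exactly, since $\langle\phi_1,\phi_H\rangle=\irn(-\Delta\phi_1)\phi_H$ and $-\Delta\phi_1=\Lambda_1Q_\o\phi_1$ is radial, so Lemma~\ref{ort} with $\varphi=-\Delta\phi_1$ and $f=\phi$ gives $\langle\phi_1,\phi_H\rangle=\langle\phi_1,\phi\rangle=0$ directly (this is precisely the paper's argument), so no Courant--Fischer detour over two-dimensional subspaces is needed; the only other difference is cosmetic, as the paper runs the maximum-principle step on $|\phi-\phi_e|=\pm(2\phi_H-\phi-\phi_e)$ with a Hopf-lemma gluing across $\partial\o$ rather than on $w=\phi_H-\phi$ in $H(e)$ as you do.
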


\begin{proof}
Let $\phi_1$ and $\phi$ be eigenfunctions with eigenvalues $\Lambda_1$ and $\Lambda_2$, respectively, such that $ \phi_1,\phi\in\cS_\o$. Notice that $-\Delta\phi_1=\Lambda_1 Q_\Omega\phi_1$ is radially symmetric. By Lemma~\ref{ort},
    \begin{align*}
        \irn \nabla\phi_1\cdot\nabla \phi_H=\irn-(\Delta \phi_1)\phi_H=\irn-(\Delta \phi_1)\phi=\irn\nabla\phi_1\cdot\nabla\phi=0.
    \end{align*}
    Therefore, $\phi_H$ is an eigenfunction of \eqref{P_lambdaI} with eigenvalue $\Lambda_2$. Notice that $|\phi-\phi_e|=2\phi_H-(\phi+\phi_e)$ in $H:=H(e)$ and $-|\phi-\phi_e|=2\phi_H-(\phi+\phi_e)$ in $\rn\smallsetminus H(e)$. By standard elliptic regularity arguments, $\phi,\phi_H\in W_{loc}^{2,s}(\rn)\cap\cC_{loc}^{1,\alpha}(\rn)$ for all $s\in [1,\infty)$ and $\alpha\in(0,1)$. Note that $2\phi_H-(\phi+\phi_e)=0=-(2\phi_H-(\phi+\phi_e))$ on $\partial H(e)$, $|\phi-\phi_e|\in H^{1}(\rn)\cap W_{loc}^{1,s}(\rn)$ for all $s\ge 1$, and
\begin{align}\label{sol max}
-\Delta|\phi-\phi_e|
&=\begin{cases}
\Lambda_2Q_\o[(\phi_H-\phi)+(\phi_H-\phi_e)]&\text{in }\; H(e),\nonumber\\
\Lambda_2Q_\o[(\phi-\phi_H)+(\phi_e-\phi_H)]&\text{in}\; \rn\smallsetminus H(e),
\end{cases}\\
&=\begin{cases}
\;\,\,\Lambda_2Q_\o|\phi-\phi_e|\quad\text{in}\;\; H(e),\\
-\Lambda_2Q_\o|\phi-\phi_e|\quad\text{in}\; \rn\smallsetminus H(e),\\
\end{cases}\nonumber\\
&=\Lambda_2 Q_\o(\mathds 1_{H(e)}-\mathds 1_{\rn\smallsetminus H(e)})|\phi-\phi_e|
    \end{align}
    Since $|\Lambda_2 Q_\o(\mathds 1_{H(e)}-\mathds 1_{\rn\setminus H(e)})|=\Lambda_2\in L^{\infty}(\rn)$, standard elliptic regularity yields that 
    \begin{align}\label{reg}
    w_e:=|\phi-\phi_e|\in W^{2,s}_{loc}(\rn)\cap \cC_{loc}^{1,\alpha}(\rn)\cap \cC^{\infty,\alpha}(H(e)\cap\o)\cap\cC_{loc}^{\infty,\alpha}(H(e)\cap(\rn\smallsetminus\o))
    \end{align}
     for all $s\geq 1,\ \alpha\in(0,1)$.
    Hence, $w_e$ is a strong solution to $\eqref{sol max}$; in particular, 
    \begin{align*}
    -\Delta w_e\geq 0\quad \text{in $H(e)\cap\o$}\qquad \text{ and  }\qquad -\Delta w_e+\Lambda_2 w_e=0\quad \text{ in $H(e)\cap(\rn\smallsetminus\o)$.} 
    \end{align*}
     By the  strong maximum principle, either $w_e\equiv 0$ or $w_e>0$ in each connected component of $H(e)\cap \o$. We claim that, if $w_e>0$ in some connected component $A\subset\Omega$, then $w_e>0$ in $H(e)$. Indeed, let $B$ be the connected component of the positive nodal set $\{x\in \mathbb R^n\::\: w_e>0\}$ which contains $A$ and assume, by contradiction, that there exists $x_0\in \rn$ such that $w_e(x_0)=0$. Then there exists $y_0\in \partial B$ and a small ball $U$ with $y_0\in \partial U$, which is contained
    either in $B\cap \Omega$ or in $B\cap \rn\smallsetminus\Omega$.  In any case, by the Hopf Lemma, $\nabla w_e(y_0)\neq 0.$ However, making a Taylor expansion around $y_0$, in view of \eqref{reg}, it follows that
    \begin{align*}
    0\leq |\phi-\phi_e| = w_e(x) = \nabla w_e (y_0)\cdot(x-y_0)+o(|x-y_0|) \mbox{ as } |x-y_0|\to 0.
    \end{align*} 
    But this implies that $\nabla w_e(y_0)=0$, which yields a contradiction.  Thus, $w_e>0$ in $H(e)$ as claimed. This shows that either $\phi\geq \phi_e$ or $\phi\leq\phi_e$ in $H(e)$ and the claim follows from Lemma~\ref{crit}.     
\end{proof}

Recall that $\phi_2$ denotes an eigenfunction related to the second eigenvalue $\Lambda_2:=\Lambda_2(\o)$ such that $\irn Q_\o\phi_2=1$. Let
\begin{align*}
    A_{+}:=\{x\in\rn\,:\,\phi_2(x)>0\}\qquad 
    \text{and}\qquad 
    A_-:=\{x\in\rn\,:\,\phi_2(x)<0\}.
\end{align*}
By Lemma~\ref{schlem}, $A_{\pm}\neq\emptyset$. The following result is one of the main ingredients for our Hong–Krahn–Szegö-type inequality.
\begin{lemma}\label{MIN SECONG EIGENVALUE}
Let $\o_{\pm}:=\o\cap A_{\pm}$ and let $\Lambda_1(\o_+)$ and $\Lambda_1(\o_-)$ denote the  first eigenvalues associated to $\o_+$ and $\o_-$, respectively. Then,
\begin{align}
    \Lambda_2(\o)>\max\{\Lambda_1(\o_+),\Lambda_1(\o_-)\}.
\end{align}
\end{lemma}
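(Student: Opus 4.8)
The plan is to exploit the restricted eigenfunction $\phi_2$ on the nodal pieces $\omega_{\pm}$ and combine the variational characterization of $\Lambda_1$ with a strict-inequality argument via unique continuation, exactly in the spirit of the proof of Proposition \ref{nd:prop}. First I would define, for the sign $+$ (the sign $-$ being identical),
\begin{align*}
w_+(x):=\begin{cases}\phi_2(x)&\text{if }x\in A_+,\\ 0&\text{if }x\in\rn\smallsetminus A_+.\end{cases}
\end{align*}
As in \eqref{eq:nodal1}, using $\langle\phi_2,w_+\rangle=\Lambda_2\irn Q_\o\phi_2 w_+$ and that $\phi_2=w_+$ on $A_+$ while $w_+=0$ elsewhere, one gets $w_+\in D^{1,2}(\rn)$ with $0<\|w_+\|^2=\Lambda_2\irn Q_\o w_+^2$; in particular $\irn Q_\o w_+^2>0$, so a suitable positive multiple $\wt w_+:=t w_+$ with $t:=(\irn Q_\o w_+^2)^{-1/2}$ lies in $\cS_{\omega_+}$ (here one checks $\supp w_+\cap\o=\o_+$, so that $\irn Q_{\o_+}w_+^2=\irn Q_\o w_+^2$). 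The variational characterization $\Lambda_1(\o_+)=\inf_{v\in\cS_{\omega_+}}\|v\|^2$ from Lemma \ref{lem:existence} then immediately gives $\Lambda_1(\o_+)\le\|\wt w_+\|^2=\Lambda_2(\o)$, and symmetrically $\Lambda_1(\o_-)\le\Lambda_2(\o)$; so the non-strict inequality is essentially free.

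The real work is upgrading this to a strict inequality. Suppose, for contradiction, that $\Lambda_1(\o_+)=\Lambda_2(\o)$ (the case of $\o_-$ is symmetric). Then $\wt w_+$ is a minimizer for $\Lambda_1(\o_+)$, hence by Lemma \ref{lem:existence} the pair $(\Lambda_2(\o),\wt w_+)$ solves the eigenvalue problem \eqref{P_lambdaI} \emph{with $\o$ replaced by $\o_+$}, i.e.
\begin{align*}
-\Delta\wt w_+=\Lambda_2(\o)\,Q_{\o_+}\wt w_+\qquad\text{in }\rn.
\end{align*}
But $\wt w_+\equiv 0$ on the open set $\rn\smallsetminus\overline{A_+}$ (which is nonempty since $A_-\neq\emptyset$ by Lemma \ref{schlem}, and $A_+, A_-$ are disjoint open sets). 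Since the coefficient $\Lambda_2(\o)Q_{\o_+}$ is bounded, the strong unique continuation principle \cite[Theorem 1.2]{gl} forces $\wt w_+\equiv 0$ in $\rn$, contradicting $\|w_+\|^2>0$. This is the step I expect to be the main obstacle: one must verify carefully that the minimizer on $\o_+$ really does produce a \emph{global} solution on all of $\rn$ vanishing on an open set — in particular that $w_+\in H^1(\rn)$ (which follows as in \eqref{eq:in H^1} once $w_+\in\cS_{\omega_+}$, since $\o_+$ is bounded) and that the test-function class in \eqref{eq:V} for $V=D^{1,2}(\rn)$ genuinely yields the distributional equation above.

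A small technical point to address along the way: the set $\o_+=\o\cap A_+$ need not be smooth, but Lemma \ref{lem:existence} and the unique continuation theorem \cite[Theorem 1.2]{gl} only require $\o_+$ to be open and bounded, so this causes no difficulty; and $\o_+\neq\emptyset$ because $\phi_2$ solves \eqref{P_lambdaI} with weight $Q_\o$, so $\phi_2$ cannot be supported in $\rn\smallsetminus\o$ (otherwise $0<\|\phi_2\|^2=\Lambda_2\irn Q_\o\phi_2^2<0$). Assembling these pieces yields $\Lambda_2(\o)>\max\{\Lambda_1(\o_+),\Lambda_1(\o_-)\}$, as claimed.
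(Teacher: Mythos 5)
Your proposal is correct and follows essentially the same route as the paper: you test the equation with $\phi_2^+=w_+$, observe that $\|\phi_2^+\|^2=\Lambda_2\irn Q_{\o_+}(\phi_2^+)^2>0$ so that a normalization lies in $\cS_{\o_+}$, deduce $\Lambda_1(\o_+)\le\Lambda_2(\o)$ from the variational characterization, and upgrade to a strict inequality via unique continuation on the nonempty open set $A_-$ where the candidate minimizer vanishes (the paper states this last step in one line; your expanded version, passing through Lemma \ref{lem:existence} to get the global equation for the minimizer, is exactly the intended argument). One minor remark: your side justification that $\o_+\neq\emptyset$ only shows that $\phi_2$ does not vanish identically on $\o$ (hence that at least one of $\o_\pm$ is nonempty), but this is harmless since your main computation already yields $\irn Q_{\o_+}w_+^2>0$ and hence $|\o_\pm|>0$ for each sign separately.
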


\begin{proof}
We show first that $\o_{\pm}\neq\emptyset$. By Lemma~\ref{schlem}, $\phi_2$ changes sign.  Since $        \lim_{|x|\to\infty}|\phi_2(x)|=0$, there are $a,b\in\rn$ such that
    \begin{align*}
        \phi_2(a)=\sup_{x\in \rn}\phi_2(x)>0\quad\text{and}\quad\phi_2(b)=\inf_{x\in\rn}\phi_2(x)<0.
    \end{align*}
    If $a\in\partial\o$, then $\o_+\neq\emptyset$ by continuity.  If $a\in\rn\setminus\partial\o$, then $\Lambda_2 Q_\o(a)\phi_2(a)=-\Delta\phi_2(a)\geq 0$, hence $Q_\o(a)>0$ and, therefore, $a\in\o$, which also yields that $\o_+\neq\emptyset$. Arguing similarly we also have that $\o_-\neq\emptyset$.
    
    Testing with $\phi_2^{+}=\max\{\phi_2,0\}$,
    \begin{align*}
        \|\phi_2^{+}\|^2=\Lambda_2\irn Q_\o\phi_2(\phi_2^+)=\Lambda_2\irn (Q_\o\mathds 1_{A_+})(\phi_2^{+})^2=\Lambda_2\irn Q_{\o_+}(\phi_2^{+})^2,
    \end{align*}
because $Q_\o\mathds1_{A_+}=\mathds 1_{\o\cap A_+}-\mathds1_{(\rn\setminus\o)\cap A_+} $ and $\phi_2^+=0$ in $\rn \setminus A_+$. Then $\phi:=\phi_2^{+}/\sqrt{\irn Q_{\o_{+}}(\phi_2^+)^2}\in S_{\o_+}$ (recall \eqref{S:def}) and 
    \begin{align}\label{l1l2}
        \Lambda_1(\o_+)=\inf_{v\in S_{\o_+}}\|v\|^2\leq \|\phi\|^2=\Lambda_2.
    \end{align}
    Since $\phi\equiv0$ in $A_{-}$, the inequality in \eqref{l1l2} must be strict (by unique continuation). We obtain the same estimate for $\Lambda_2(\o_{-})$ using similar arguments and the claim follows.
\end{proof}

We denote by $B_r$ the open ball of radius $r$ centered at the origin in $\rn$ and by
$$r(c):=\left(\frac{c}{|B_1|}\right)^{1/N}$$
the radius of the ball of volume $c$ in $\rn$. As before,
$$\o^*:=B_{r(|\o|)}.$$

\begin{theorem}\label{thm:hks}
\begin{itemize}
\item[$(i)$] For any open bounded subset $\o$ of $\rn$,
$$\Lambda_2(\o)>\Lambda_1(B_{r(|\o|/2)}).$$
\item[$(ii)$] For any $c>0$,
$$\inf\{\Lambda_2(\o):\o\subset\rn\text{ is open and bounded and }|\o|=c \, \}=\Lambda_1(B_{r(c/2)}).$$
A minimizing sequence is given by $\o_n:=B_{r(c/2)}(-ne_1)\cup B_{r(c/2)}(ne_1)$, where $e_1=(1,0,\ldots,0)\in \rn$. 
\end{itemize}
\medskip

\noindent Therefore, the second eigenvalue does not achieve its infimum among open bounded sets of fixed volume.
\end{theorem}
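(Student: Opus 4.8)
\textbf{Proof strategy for Theorem \ref{thm:hks}.}

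The plan is to combine Lemma \ref{MIN SECONG EIGENVALUE} with the Faber–Krahn inequality (Theorem \ref{Faber-Krahn}) and the scaling/monotonicity facts in Remark \ref{scaling}. For part $(i)$, start from the nodal decomposition $\o=\o_+\cup\o_-\cup(\o\cap\phi_2^{-1}(0))$ and observe that $|\o_+|+|\o_-|\leq|\o|$, so at least one of the two, say $\o_+$, satisfies $|\o_+|\leq|\o|/2$. Then chain the inequalities
\begin{align*}
\Lambda_2(\o)>\Lambda_1(\o_+)\geq\Lambda_1(\o_+^*)\geq\Lambda_1\big(B_{r(|\o|/2)}\big),
\end{align*}
where the first inequality is Lemma \ref{MIN SECONG EIGENVALUE}, the second is Faber–Krahn, and the third is the inclusion monotonicity from Remark \ref{scaling}(1) together with the fact that $\o_+^*=B_{r(|\o_+|)}\subset B_{r(|\o|/2)}$ because $|\o_+|\leq|\o|/2$. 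The same argument applies if instead $|\o_-|\leq|\o|/2$, so part $(i)$ follows. (Note this is sharper than inequality \eqref{ine}, since $B_{r(|\o|/2)}\supset 2^{-N}\o^*$, but \eqref{ine} has a cleaner self-similar form; I would derive \eqref{ine} as a corollary by noting $r(|\o|/2)=2^{-1}r(|\o|)\cdot 2^{1/N}\ge 2^{-N+1/N}r(|\o|)$ — actually the cleanest route is to keep $(i)$ as the main statement and deduce \eqref{ine} directly from $B_{r(|\o|/2)}\supset B_{2^{-N}r(|\o|)}=2^{-N}\o^*$.)

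For the lower bound in part $(ii)$, part $(i)$ already gives $\inf\{\Lambda_2(\o):|\o|=c\}\geq\Lambda_1(B_{r(c/2)})$, so it remains to show the infimum is not larger, i.e. to exhibit a minimizing sequence. Take $\o_n:=B_{r(c/2)}(-ne_1)\cup B_{r(c/2)}(ne_1)$, which has measure $c$. The idea is that as $n\to\infty$ the two balls decouple: let $\psi$ be the positive first eigenfunction of \eqref{P_lambdaI} on a single ball $B:=B_{r(c/2)}(0)$ normalized in $\cS_B$, and build test functions $u_n^\pm(x):=\psi(x\mp ne_1)$ supported (in the sense of being the relevant eigenfunction) near each ball. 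One shows $\Lambda_2(\o_n)\le$ the value of the Rayleigh-type quotient on a suitable two-dimensional subspace spanned by (truncations of) $u_n^+$ and $u_n^-$; since each $u_n^\pm$ solves the equation exactly on the half-space containing its own ball and decays exponentially (Theorem \ref{decay:thm}), the cross terms $\langle u_n^+,u_n^-\rangle$ and $\irn Q_{\o_n}u_n^+u_n^-$ are $O(e^{-cn})$. Choosing the linear combination orthogonal to $\phi_1(\o_n)$ in $D^{1,2}$ (a one-dimensional constraint on a two-dimensional space always leaves a nonzero vector), and checking that its $Q_{\o_n}$-mass stays positive and bounded below, yields $\Lambda_2(\o_n)\le\Lambda_1(B)+o(1)=\Lambda_1(B_{r(c/2)})+o(1)$. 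Combined with the lower bound this proves $(ii)$, and the final sentence (no minimizer exists) follows because the strict inequality in $(i)$ forbids any $\o$ with $|\o|=c$ from attaining $\Lambda_1(B_{r(c/2)})$.

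The main obstacle is the decoupling estimate in part $(ii)$: one must be careful that the truncations needed to make $u_n^\pm$ genuinely supported near their own balls (so that the competitor lies in the right constrained class and has controlled $Q_{\o_n}$-mass) do not spoil the Dirichlet energy by more than $o(1)$, and that after imposing orthogonality to $\phi_1(\o_n)$ the resulting function still has strictly positive $\irn Q_{\o_n}(\cdot)^2$ — this is where the exponential decay from Theorem \ref{decay:thm} and the explicit structure of $\cS_\o$ (the fact that $\int_{\rn\ssm\o}$ of a $D^{1,2}$ function on the complement of a bounded set is finite, \eqref{eq:in H^1}) are essential. A secondary technical point is verifying that $\phi_1(\o_n)$ itself does not concentrate on only one of the two balls in a way that breaks the orthogonality bookkeeping; by symmetry $\o_n$ is invariant under $x_1\mapsto-x_1$, and since $\Lambda_1$ is simple (Theorem \ref{thm:existence of eigenvalues}(iv)) with a positive eigenfunction, $\phi_1(\o_n)$ is even in $x_1$, which gives the needed control.
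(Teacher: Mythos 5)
Your proposal is correct and follows essentially the same route as the paper: part $(i)$ via Lemma \ref{MIN SECONG EIGENVALUE}, Faber--Krahn and the scaling/monotonicity of Remark \ref{scaling}, and part $(ii)$ via translated copies of the first eigenfunction of a single ball with exponentially small interaction terms. The only difference is cosmetic: the paper takes directly the antisymmetric combination $\phi_1(\cdot+ne_1)-\phi_1(\cdot-ne_1)$, which is automatically orthogonal in $D^{1,2}(\rn)$ to the (even, by simplicity of $\Lambda_1$) first eigenfunction of $\o_n$ and has $Q_{\o_n}$-mass $2+o(1)$, so no truncation or two-dimensional subspace argument is needed.
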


\begin{proof}
$(i):$ By Theorem~\ref{Faber-Krahn} and Remark~\ref{scaling}, 
\begin{align*}
    \Lambda_1(\o_+)\geq \Lambda_1(\o_+^*)
    =\left(\frac{|\o_+|}{|B_1|}\right)^{2/N}\Lambda_1(B_1).
\end{align*}
Similarly, $\Lambda_1(\o_-)\geq\left(\frac{|\o_-|}{|B_1|}\right)^{2/N}\Lambda_1(B_1).$ Hence, by Lemma~\ref{MIN SECONG EIGENVALUE},
    \begin{align*}
        \Lambda_2(\o)
        &>\max\{\Lambda_1(\o_+),\Lambda_1(\o_-)\}
        \geq \max\left\{\left(\frac{|\o_+|}{|B_1|}\right)^{2/N}\Lambda_1(B_1), \left(\frac{|\o_-|}{|B_1|}\right)^{2/N}\Lambda_1(B_1)\right\}\\
        &=\frac{\Lambda_1(B_1)}{|B_1|^{2/N}}\left(\max\{|\o_+|,|\o_-|\}\right)^{2/N}
        \geq \left(\frac{|\o|/2}{|B_1|}\right)^{2/N}\Lambda_1(B_1)
        =\Lambda_1(B_{r(|\o|/2)}),
    \end{align*}
    where we used that $2\max\{|\o_+|,|\o_-|\}\geq |\o_+|+|\o_-|=|\o|$. 
    
$(ii):$ For simplicity, we take $c:=2|B_1|$. A scaling argument using Remark~\ref{scaling} gives the statement for any $c>0$. Let $\phi_1$ denote the  eigenfunction associated to the first eigenvalue $\Lambda_1:=\Lambda_1(B_1)$ with $\int_{\rn}Q_{B_1} \phi_1^2=1$.  It is clear that $\phi_1(\,\cdot\,-n e_1)$ and $\phi_1(\,\cdot\,+ne_1)$ are the 
      eigenfunctions associated to the first eigenvalues $\Lambda_1(B_1(ne_1))$ and $\Lambda_1(B_1(-n e_1))$, respectively, and 
     $\Lambda_1=\Lambda_1(B_1(n e_1))=\Lambda_1(B_1(-ne_1)).$
     Let $u_n(x):=\phi_1(x+ne_1)-\phi_1(x-ne_1)$ for $x\in \rn$. Using that $\int_{\rn}Q_{B_1} \phi_1^2=1$ and that $\phi_1$ is uniformly bounded in $\rn$ and decays exponentially (by Theorem~\ref{decay:thm}), it is not hard to verify that 
     \begin{align}\label{ochiquita}
         \irn Q_{\o_n} u_n^2=2+o(1)
     \end{align}
and 
\begin{align}\label{de2}
       \irn\nabla \phi_1(x+ne_1)\cdot\nabla\phi_1(x-ne_1)=\Lambda_1(B_1)\irn Q_{B_1(-ne_1)}\phi_1(x+ne_1)\phi_1(x-ne_1)=o(1)
   \end{align}
as $n\to\infty.$ Let $f_n$ be the  eigenfunction associated to the first eigenvalue $\alpha_n:=\Lambda_1(\o_n)$ with $\irn Q_{\o_n}f_n^2=1.$ Since the first eigenvalue is simple,
    \begin{align}
       \phi_1(x)=\phi_1(\sigma(x))\quad\text{and}\quad f_n(x)=f_n(\sigma(x))\quad \text{ for all }x\in \rn,\qquad \text{where \ }\sigma(x):=(-x_1,...,x_N).
   \end{align}
   Then, changing variables,
\begin{align*}
    \irn Q_{\o_n}(x)f_n(x)\phi_1(x-ne_1)\d x
    =\irn Q_{\o_n}(x)f_n(x)\phi_1(\sigma(x)-ne_1)\d x
    =\irn Q_{\o_n}(x)f_n(x)\phi_1(x+ne_1)\d x
\end{align*}
   and, as a consequence, 
   \begin{align*}
       \irn\nabla f_n\cdot\nabla u_n&=\alpha_n\left(\irn Q_{\o_n}(x)f_n(x)\phi_1(x-ne_1)\d x-\irn Q_{\o_n}(x)f_n(x)\phi_1(x+ne_1)\d x\right)=0.
   \end{align*}
   Hence, $f_n$ and $u_n$ are orthogonal in $D^{1,2}(\rn)$. If $       \psi_n:=u_n/\sqrt{\irn Q_{\o_n}u_n^2},$ then, using statement $(i)$, \eqref{ochiquita} and \eqref{de2},
    \begin{align*}
\Lambda_1(B_1)<  \Lambda_2(\o_n)=\inf_{\substack{v\in S_{\o_n}\\ v\perp f_n}}\|v\|^2\leq\|\psi_n\|^2=\frac{2\irn|\nabla\phi_1|^2}{2+o(1)}+o(1)=\Lambda_1(B_1)+o(1)\qquad \text{ as }n\to\infty,
    \end{align*}
as claimed.
\end{proof}

Summarizing, we obtain Theorem~\ref{theonethm}.

\begin{proof}[Proof of Theorem~\ref{theonethm}]

\textbf{Part 1.} This is an immediate consequence of Proposition~\ref{nd:prop} and Lemma~\ref{schlem}, which establish the nodal properties and sign-changing behavior of the eigenfunctions.

\textbf{Part 2.} The sharp decay estimates are proven in Theorem~\ref{decay:thm}.

\textbf{Part 3.} The Faber--Krahn-type inequality is established in Theorem~\ref{Faber-Krahn}.

\textbf{Part 4.} The Hong--Krahn--Szegö-type inequality is proved in Theorem~\ref{thm:hks}.

\textbf{Part 5.} The symmetry properties of the eigenfunctions are derived in Propositions~\ref{phi1rs} and~\ref{phi2fss}, which treat the radial symmetry of the first eigenfunction and the foliated Schwarz symmetry of the second one, respectively.

\end{proof}

\subsection{Nonexistence of negative eigenvalues}\label{sec:neg}

The following Poho\v zaev-type identity was proved in \cite[Theorem 3.1]{cfs25}.

\begin{theorem}\label{thm:poho}
Let $\Theta$ be a (possibly unbounded) smooth open subset of $\rn$ and $f:\r\to\r$ be a continuous function. Set $F(s):=\int_0^s f$. If $u\in \cC^{1}(\rn)\cap \cC^2(\Theta)\cap \cC^2(\rn\smallsetminus\overline{\Theta})\cap D^{1,2}(\rn)$ is a solution to
\begin{equation} \label{eq:problem_pohozhaev}
-\Delta u=Q_\Theta f(u)
\end{equation}
such that $F(u)\in L^1(\rn)$, then $u$ satisfies
\begin{align*}
\frac{1}{2^*} \int_{\rn}|\nabla u|^2-\int_{\rn}Q_\Theta F(u)  + \frac{2}{N}\int_{\partial\t}F(u)\, \zeta\cdot \nu_\Theta \, d\zeta=0,
\end{align*}
where $\nu_\t$ denotes the exterior unit normal to $\t$.
\end{theorem}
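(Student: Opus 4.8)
The plan is to establish the identity by the classical Rellich--Poho\v zaev multiplier argument: I would multiply the equation by the generator of dilations $x\cdot\nabla u$, integrate over a ball $B_R$, and then let $R\to\infty$ along a carefully chosen sequence. The one new ingredient compared with the smooth-weight case is the jump of $Q_\t$ across $\partial\t$, and that jump is precisely what creates the boundary integral over $\partial\t$. Concretely, I would fix $R>0$ such that $\partial B_R$ meets $\partial\t$ transversally (true for a.e.\ $R$ by Sard's theorem, so $B_R\cap\t$ and $B_R\ssm\overline\t$ have Lipschitz boundary), multiply $-\Delta u=Q_\t f(u)$ by $x\cdot\nabla u$, and integrate over $B_R$. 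On the left-hand side I would invoke the pointwise Rellich identity
\[
(-\Delta u)\,(x\cdot\nabla u)=-\operatorname{div}\big[(\nabla u)(x\cdot\nabla u)\big]+\tfrac12\operatorname{div}\big(|\nabla u|^2\,x\big)-\tfrac{N-2}{2}\,|\nabla u|^2 ,
\]
which holds classically on $\t\cap B_R$ and on $B_R\ssm\overline\t$ because $u\in\cC^2$ there. Integrating over the two pieces and adding, the interface terms along $\partial\t\cap B_R$ come with opposite normals $\pm\nu_\t$; since $\nabla u$ is continuous across $\partial\t$ (as $u\in\cC^1(\rn)$), they cancel, leaving
\[
\int_{B_R}(-\Delta u)(x\cdot\nabla u)=-\frac{N-2}{2}\int_{B_R}|\nabla u|^2-\int_{\partial B_R}(x\cdot\nabla u)\,\partial_\nu u\,\d\sigma+\frac{R}{2}\int_{\partial B_R}|\nabla u|^2\,\d\sigma .
\]

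On the right-hand side I would use that $F\in\cC^1$ and $u\in\cC^1(\rn)$ make $F(u)\,x$ a $\cC^1$ vector field and $f(u)(x\cdot\nabla u)=x\cdot\nabla F(u)=\operatorname{div}(F(u)x)-N\,F(u)$. Splitting $B_R$ into $B_R\cap\t$ (where $Q_\t=1$) and $B_R\ssm\overline\t$ (where $Q_\t=-1$) and applying the divergence theorem on each, the outer normal along $\partial\t\cap B_R$ is $+\nu_\t$ for the first region and $-\nu_\t$ for the second; because $Q_\t$ jumps by $2$ across $\partial\t$, these two interface contributions \emph{add} (instead of cancelling, as they would for a continuous weight) and produce $2\int_{\partial\t\cap B_R}F(u)\,(\zeta\cdot\nu_\t)\,\d\zeta$, while the parts on $\partial B_R$ combine into $R\int_{\partial B_R}Q_\t F(u)\,\d\sigma$. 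Altogether I would obtain
\[
\int_{B_R}Q_\t f(u)(x\cdot\nabla u)=-N\int_{B_R}Q_\t F(u)+R\int_{\partial B_R}Q_\t F(u)\,\d\sigma+2\int_{\partial\t\cap B_R}F(u)\,(\zeta\cdot\nu_\t)\,\d\zeta .
\]

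Equating the two expressions, the final step is the limit $R\to\infty$. I would set $g(R):=\int_{\partial B_R}|\nabla u|^2\,\d\sigma$ and $h(R):=\int_{\partial B_R}|F(u)|\,\d\sigma$; integrating in polar coordinates, $\int_0^\infty(g+h)\,\d R=\int_{\rn}(|\nabla u|^2+|F(u)|)<\infty$ since $u\in D^{1,2}(\rn)$ and $F(u)\in L^1(\rn)$, whence $\liminf_{R\to\infty}R\,(g(R)+h(R))=0$. So there is a sequence $R_n\to\infty$, which I may also take inside the full-measure transversality set, with $R_n(g(R_n)+h(R_n))\to0$; along it all three integrals over $\partial B_{R_n}$ vanish in the limit (each is bounded by a constant times $R_ng(R_n)$ or $R_nh(R_n)$, using $|x\cdot\nabla u|\le R|\nabla u|$ and $|Q_\t|=1$). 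By monotone convergence $\int_{B_{R_n}}|\nabla u|^2\to\int_{\rn}|\nabla u|^2$, by dominated convergence $\int_{B_{R_n}}Q_\t F(u)\to\int_{\rn}Q_\t F(u)$, and therefore $\int_{\partial\t\cap B_{R_n}}F(u)(\zeta\cdot\nu_\t)\,\d\zeta$ converges; its limit is $\int_{\partial\t}F(u)(\zeta\cdot\nu_\t)\,\d\zeta$ (absolutely convergent when $\t$ is bounded, and understood as this limit in the unbounded case). Passing to the limit and rearranging, using $\tfrac{N-2}{2N}=\tfrac1{2^*}$, gives the asserted identity.

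The main obstacle I anticipate is not the computation but two structural points. First, one must justify that the Rellich identity survives \emph{globally} even though $u$ is only $\cC^1$ (not $\cC^2$) across $\partial\t$; the resolution is that the interface boundary terms from the two sides of $\partial\t$ depend only on $\nabla u$ and $\nu_\t$ and hence cancel by continuity of $\nabla u$ — this is exactly where the regularity hypothesis $u\in\cC^1(\rn)\cap\cC^2(\t)\cap\cC^2(\rn\ssm\overline\t)$ is used. Second, the careful bookkeeping of the $\partial\t$ contributions is where the sign change of $Q_\t$ is essential: what would cancel for a continuous weight instead survives with a factor $2$, producing the term $\tfrac2N\int_{\partial\t}F(u)\,\zeta\cdot\nu_\t$. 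A secondary technical point is choosing the exhausting radii $R_n$ so as to simultaneously kill every $\partial B_R$ term and keep $\partial B_R$ transverse to $\partial\t$; this is handled by the $\liminf$ trick together with Sard's theorem.
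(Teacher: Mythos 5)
The paper does not give its own proof of this identity; it is cited verbatim from \cite[Theorem~3.1]{cfs25}. Your argument is correct and is precisely the standard Rellich--Poho\v{z}aev multiplier computation adapted to the piecewise-constant weight: multiply by $x\cdot\nabla u$, use the pointwise Rellich identity separately on $\Theta\cap B_R$ and $B_R\smallsetminus\overline{\Theta}$ (where $u\in\cC^2$), observe that the interface fluxes of $(\nabla u)(x\cdot\nabla u)-\tfrac12|\nabla u|^2x$ cancel by continuity of $\nabla u$ while the interface fluxes of $Q_\Theta F(u)x$ add because $Q_\Theta$ jumps by $2$, and then kill the $\partial B_R$ terms by the usual $\liminf\,R\!\int_{\partial B_R}(|\nabla u|^2+|F(u)|)=0$ trick coming from $u\in D^{1,2}(\rn)$ and $F(u)\in L^1(\rn)$. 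You correctly identify both structural points (where $u\in\cC^1(\rn)$ vs.\ $\cC^2$ piecewise is used, and why the $\partial\Theta$ contribution survives with factor $2$), and the bookkeeping after dividing by $-N$ yields exactly $\tfrac1{2^*}\int|\nabla u|^2-\int Q_\Theta F(u)+\tfrac2N\int_{\partial\Theta}F(u)\,\zeta\cdot\nu_\Theta=0$; this is almost certainly the proof given in the cited reference.
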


We apply the Poho\v zaev identity to prove our nonexistence result, Theorem~\ref{thm:poslam}, stated in the introduction.

\begin{proof}[Proof of Theorem~\ref{thm:poslam}]
Applying Theorem~\ref{thm:poho}, we have that 
\begin{align*}
0
&=\frac{1}{2^*} \int_{\rn}|\nabla u|^2-\frac{\Lambda}{2}\int_{\rn}Q_\Omega u^2  + \frac{\Lambda}{N}\int_{\partial\Omega}u^2 \zeta\cdot \nu_\Omega \, d\zeta=\left(\frac{1}{2^*}-\frac{1}{2}\right)\int_{\rn}|\nabla u|^2  + \frac{\Lambda}{N}\int_{\partial\Omega}u^2\, \zeta\cdot \nu_\Omega \, d\zeta.
\end{align*}
This implies that $\frac{\Lambda}{N}\int_{\partial\Omega}u^2\, \zeta\cdot \nu_\Omega \, d\zeta>0$, and the claim follows.
\end{proof}

\section{The almost linear problem}
\label{sec:nonlinear}

In this section we consider the nonlinear problem \eqref{main} and we study the behavior of least energy solutions for $p$ close to $2$.

\subsection{Asymptotic behavior of ground states}
\label{sec:minimizers}

We adapt the approach followed in \cite{st22}. For $p\in (1,2^*)$ consider the Banach space $X^p:=D^{1,2}(\rn)\cap L^p(\rn)$ equipped with the norm
$$\|u\|_{X^p}:=(\|u\|^2+|u|_p^2)^\frac{1}{2},$$
where $|\cdot|_p$ denotes the norm in $L^p(\rn)$.
Set
\begin{align*}
\cS_{\Omega,p}:=\left\{v\in X^p: \irn Q_\o(x)|v|^p=1\right\},
\end{align*}
and consider the minimization problem
\begin{align}\label{alpha_1}
	\alpha_p :=\inf_{v \in \cS_{\o,p}
    }\| v\|^2.
\end{align}

 \begin{lemma} \label{lem:bounds for alpha}
There are $a_0>0$ and $a_1>0$ such that $0<a_0\leq \alpha_p\leq a_1<\infty$ for all $p\in (1,2^*)$.
\end{lemma}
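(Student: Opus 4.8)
The plan is to establish the two inequalities separately, using the normalization constraint $\irn Q_\o |v|^p = 1$ together with the boundedness of $\o$ and Sobolev-type embeddings.

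For the upper bound $\alpha_p \leq a_1$, I would exhibit a single explicit competitor that works for all $p$ in a neighborhood, or more carefully, handle the dependence on $p$. Fix a nonzero function $\vp \in \cC^\infty_c(\rn)$ with $\supp \vp \subset \o$, so that $\irn Q_\o |\vp|^p = \io |\vp|^p =: I_p > 0$. Then $v_p := I_p^{-1/p}\vp \in \cS_{\o,p} \cap X^p$, and $\alpha_p \leq \|v_p\|^2 = I_p^{-2/p}\|\vp\|^2$. Since $\vp$ is fixed, continuous and compactly supported, $p \mapsto I_p$ is continuous and bounded away from $0$ and $\infty$ on any compact subinterval of $(1,2^*)$; one checks $I_p^{-2/p}$ stays bounded on $(1,2^*)$ because $|\vp|$ can be taken bounded by $1$ on a set of positive measure, etc. (If one wants a bound uniform over the full open interval $(1,2^*)$ the estimate needs a little care near the endpoints, but a uniform bound on compact subintervals, which is all the subsequent arguments require, is immediate.) This gives $\alpha_p \leq a_1 < \infty$.

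For the lower bound $\alpha_p \geq a_0 > 0$, take any $v \in \cS_{\o,p}$. Then
\[
1 = \irn Q_\o |v|^p \leq \io |v|^p \leq |\o|^{1-\frac{p}{2^*}} \left(\io |v|^{2^*}\right)^{\frac{p}{2^*}} \leq |\o|^{1-\frac{p}{2^*}} \left(S^{-1}\|v\|^2\right)^{\frac{p}{2^*}},
\]
using that $Q_\o \leq \mathds 1_\o$, Hölder's inequality on the bounded set $\o$ (valid since $p < 2^*$), and the Sobolev inequality $|v|_{2^*}^2 \leq S^{-1}\|v\|^2$ on $D^{1,2}(\rn)$, with $S$ the Sobolev constant. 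Rearranging gives $\|v\|^2 \geq S\, |\o|^{-\frac{2}{p}\left(1-\frac{p}{2^*}\right)}$, and taking the infimum over $v \in \cS_{\o,p}$ yields $\alpha_p \geq S\,|\o|^{-\frac{2}{p}\left(1-\frac{p}{2^*}\right)} =: a_0 > 0$. The exponent of $|\o|$ is bounded on compact subintervals of $(1,2^*)$, so $a_0$ can be chosen uniform there.

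The only mild obstacle is bookkeeping the $p$-dependence of the constants: both $I_p^{-2/p}$ (upper bound) and $|\o|^{-\frac{2}{p}(1-p/2^*)}$ (lower bound) must be controlled, and whether one wants the bounds uniform on all of $(1,2^*)$ or just on compact subintervals affects how delicate the endpoint analysis is. Since the later results only invoke this lemma for $p$ near $2$ (hence in a fixed compact subinterval), I would state and prove it in that form, remarking that on $(1,2^*)$ itself one still gets finite positive bounds for each fixed $p$ with the constants depending continuously on $p$.
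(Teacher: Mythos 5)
Your proposal is correct and follows essentially the same route as the paper: Hölder plus Sobolev on the bounded set $\o$ for the lower bound, and a fixed compactly supported competitor normalized in $\cS_{\o,p}$ for the upper bound. Two minor points: the exponent in your intermediate display should be $\left(S^{-1}\|v\|^2\right)^{p/2}$ rather than $\left(S^{-1}\|v\|^2\right)^{p/2^*}$ (your final formula is consistent with the correct exponent), and the hedging about endpoints is unnecessary --- both constants extend continuously to the closed interval $[1,2^*]$, so taking the min/max there (as the paper does) gives bounds uniform on all of $(1,2^*)$.
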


\begin{proof}
Let $u\in\cS_{\Omega,p}$. Since $\o$ is bounded, using Hölder's and Sobolev's inequalities we obtain
    \begin{align*}
        1&=\irn Q_\o(x)|u|^p\leq\io|u|^p
        \leq|\o|^{\frac{2^*-p}{2^*}}\left(\io|u|^{2^*}\right)^{p/2^*} \leq |\o|^{
\frac{2^*-p}{2^*}}\left(\irn|u|^{2^*}\right)^{p/2^*}
        \leq |\o|^{\frac{2^*-p}{2^*}}C\left(\irn|\nabla u|^{2}\right)^{p/2},
    \end{align*}
    where $C=C(N)>0$. Therefore, $a_0:=\min_{p\in[1,2^*]}(|\o|^{\frac{(p-2^*)2}{2^*p}}C^{-2/p})\leq \|u\|^2$ for every $u\in\cS_{\o,p}$ and $p\in(1,2^*)$.  This shows that $\alpha_p\geq a_0>0$.
    
To prove the other inequality, fix $\rho\in \cC_c^\infty(\o)$ such that $\rho\neq 0$. Then, the function $f:[1,\infty)\to D^{1,2}(\rn)$, given by $f(p):=\frac{\rho}{\left(\irn Q_\Omega|\rho|^p\right)^{1/p}}$, is continuous and $f(p)\in \cS_{\o,p}$. Hence,
\begin{align}\label{bound }
\alpha_p\leq \max_{p\in [1,2^*]}\|f(p)\|^2=:a_1<\infty \qquad\text{for all}\;p\in [1,2^*],
\end{align} 
as claimed.  
\end{proof}

\begin{theorem}
For each $p\in (1,2^*)\backslash\{2\}$, there exists a nonnegative function $v_p\in \cS_{\o,p}$ such that \eqref{alpha_1} is achieved at $v_p$ and $u_p:=\alpha_p^\frac{1}{p-2} v_p$ is a solution to \eqref{main}.
\end{theorem}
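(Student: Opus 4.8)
The plan is to prove existence of a minimizer for $\alpha_p$ via the direct method, then verify the Euler–Lagrange equation and rescale. First I would take a minimizing sequence $v_n \in \cS_{\o,p}$ with $\|v_n\|^2 \to \alpha_p$; by Lemma \ref{lem:bounds for alpha} the sequence is bounded in $D^{1,2}(\rn)$, and since $1 = \irn Q_\o |v_n|^p \le \io |v_n|^p$, the $L^p(\o)$ norms are controlled, but one must still bootstrap to control $\int_{\rn\ssm\o}|v_n|^p$. The key point, exactly as in the proof of Lemma \ref{lem:existence} for the linear case, is that the constraint $\irn Q_\o|v|^p = 1$ forces $\int_{\rn \ssm \o} |v_n|^p = \io |v_n|^p - 1 < \infty$, so $(v_n)$ is bounded in $L^p(\rn)$ and hence in $X^p$. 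Passing to a subsequence, $v_n \rightharpoonup v$ in $D^{1,2}(\rn)$, $v_n \to v$ in $L^p_{loc}(\rn)$ and a.e.; replacing $v_n$ by $|v_n|$ we may assume $v \ge 0$.

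The main obstacle will be verifying that $v$ still lies on the constraint set $\cS_{\o,p}$ — i.e. ruling out loss of mass at infinity while proving $\irn Q_\o |v|^p \ge 1$. On the bounded set $\o$, compactness of the embedding $D^{1,2}(\rn) \hookrightarrow L^p(\o)$ gives $\io |v_n|^p \to \io |v|^p$. For the exterior integral one uses Fatou's lemma in the favorable direction: $\int_{\rn\ssm\o}|v|^p \le \liminf \int_{\rn\ssm\o}|v_n|^p$, so
\begin{align*}
\irn Q_\o|v|^p = \io |v|^p - \int_{\rn\ssm\o}|v|^p \ge \lim_{n}\io |v_n|^p - \liminf_n \int_{\rn\ssm\o}|v_n|^p = 1,
\end{align*}
mirroring \eqref{eq:>1}. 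Hence $v \ne 0$, and by weak lower semicontinuity of the norm,
\begin{align*}
\alpha_p \le \left\| \frac{v}{(\irn Q_\o |v|^p)^{1/p}} \right\|^2 = \frac{\|v\|^2}{(\irn Q_\o|v|^p)^{2/p}} \le \|v\|^2 \le \liminf_n \|v_n\|^2 = \alpha_p,
\end{align*}
which simultaneously forces $\irn Q_\o |v|^p = 1$ (so $v \in \cS_{\o,p}$, and $v \in X^p$ since $\|v\| < \infty$ and $v \in L^p$) and $\|v\|^2 = \alpha_p$. Set $v_p := v \ge 0$.

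Finally I would derive the Euler–Lagrange equation by the same perturbation trick as in Lemma \ref{lem:existence}: for $\varphi \in \cC_c^\infty(\rn)$, the set $\{w \in X^p : \irn Q_\o |w|^p > 0\}$ is open, so $\irn Q_\o|v_p + t\varphi|^p > 0$ for small $|t|$, the function $t \mapsto \|v_p + t\varphi\|^2 / (\irn Q_\o |v_p + t\varphi|^p)^{2/p}$ is differentiable and minimized at $t=0$ (using that $s \mapsto |s|^p$ is $C^1$ for $p > 1$), and setting the derivative to zero gives
\begin{align*}
\langle v_p, \varphi \rangle = \alpha_p \irn Q_\o |v_p|^{p-2} v_p\, \varphi \qquad \text{for all } \varphi \in \cC_c^\infty(\rn).
\end{align*}
Then $u_p := \alpha_p^{1/(p-2)} v_p$ satisfies $\langle u_p, \varphi\rangle = \alpha_p \cdot \alpha_p^{1/(p-2)} \irn Q_\o |v_p|^{p-2}v_p\varphi = \alpha_p^{1 + 1/(p-2)} \alpha_p^{-(p-1)/(p-2)} \irn Q_\o |u_p|^{p-2}u_p \varphi = \irn Q_\o |u_p|^{p-2} u_p \varphi$, since $1 + \frac{1}{p-2} - \frac{p-1}{p-2} = 0$; thus $u_p$ is a (nonnegative, nontrivial) weak solution of \eqref{main} in $X^p$. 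I would note in passing that positivity (strict) of $v_p$, hence of $u_p$, then follows from elliptic regularity and the strong maximum principle applied to $-\Delta v_p + \alpha_p \mathds 1_{\rn\ssm\o} v_p^{p-1} = \alpha_p \mathds 1_\o v_p^{p-1} \ge 0$, exactly as in part $(iii)$ of the proof of Theorem \ref{thm:existence of eigenvalues}, though the statement as given only claims nonnegativity.
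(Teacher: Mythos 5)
Your proposal is correct and follows essentially the same route the paper intends (the paper's proof of this theorem is simply a reference to Lemma \ref{lem:existence} and to \cite[Lemma 2.1]{fang2020limiting}): the direct method, Fatou's lemma on $\rn\ssm\o$ combined with local compactness on $\o$ to show the weak limit stays on the constraint set, weak lower semicontinuity to identify it as a minimizer, and the quotient-differentiation trick for the Euler--Lagrange equation followed by the rescaling $u_p=\alpha_p^{1/(p-2)}v_p$. The one cosmetic slip is your justification that the $L^p(\o)$-norms are ``controlled'': the constraint $1=\irn Q_\o|v_n|^p\le\io|v_n|^p$ is a \emph{lower} bound, and the uniform upper bound on $\io|v_n|^p$ (hence on $\int_{\rn\ssm\o}|v_n|^p=\io|v_n|^p-1$) actually comes from H\"older and Sobolev on the bounded set $\o$ exactly as in Lemma \ref{lem:bounds for alpha} — but nothing in the argument breaks, since the rest of your proof only uses weak convergence in $D^{1,2}(\rn)$ and Fatou.
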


\begin{proof}
The proof is similar to that of Lemma ~\ref{lem:existence}. It is given in \cite[Lemma 2.1]{fang2020limiting}. 
\end{proof}

A solution $u_p:=\alpha_p^\frac{1}{p-2} v_p$ to \eqref{main} such that $v_p$ is a minimizer for $\alpha_p$ is called a \emph{least energy solution}. These solutions are the minimizers on the space $X^p$ of the energy functional given by
$$J(u):=\frac{1}{2}\irn|\nabla u|^2-\frac{1}{p}\irn Q_\o|u|^p$$
if $p\in(1,2)$, and they are the minimizers of $J$ on the Nehari manifold $\cN$, defined in \cite[(3.2)]{CHS25}, if $p\in(2,2^*)$.

\begin{lemma}\label{regularity}
    Every solution to \eqref{main} belongs to $W_{loc}^{2,s}(\rn)\cap\cC_{loc}^{1,\alpha}(\rn)\cap\cC_{loc}^{\infty}(\o)\cap\cC_{loc}^{\infty}(\rn\setminus\overline{\Omega})$ for all $s\in[1,\infty)$ and $\alpha\in (0,1)$.
\end{lemma}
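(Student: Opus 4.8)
The plan is to establish interior regularity separately on the three open regions $\o$, $\rn\ssm\overline{\o}$, and a neighborhood of $\partial\o$, then patch the information together. Let $u\in X^p$ solve \eqref{main}, so $-\Delta u = Q_\o |u|^{p-2}u$ weakly. First I would note that $u\in D^{1,2}(\rn)\subset L^{2^*}_{loc}(\rn)$, so the right-hand side $f:=Q_\o|u|^{p-2}u$ lies in $L^{2^*/(p-1)}_{loc}(\rn)$ (using $|f|\le |u|^{p-1}$ and $2^*/(p-1)>1$ since $p-1<2^*-1$). A standard Calderón--Zygmund / Brezis--Kato bootstrap then applies: since $Q_\o$ is merely bounded and measurable globally, one only gets $W^{2,s}_{loc}$ regularity across $\partial\o$, but the exponent $s$ can be pushed up iteratively. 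The key point is that each step of the bootstrap improves the local integrability of $u$: if $u\in L^q_{loc}$ with $q$ large, then $f\in L^{q/(p-1)}_{loc}$, and elliptic estimates give $u\in W^{2,q/(p-1)}_{loc}$, hence (by Sobolev embedding, using $N/2 < \cdots$) $u\in L^{q'}_{loc}$ with $q' > q$ as long as $q/(p-1) < N/2$; since $p-1<1<N/2$ cannot be ruled out cheaply, one runs the Moser/Brezis--Kato iteration to conclude $u\in L^q_{loc}$ for every $q<\infty$, whence $f\in L^q_{loc}$ for every $q<\infty$, and therefore $u\in W^{2,s}_{loc}(\rn)$ for all $s\in[1,\infty)$. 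Morrey's embedding then gives $u\in\cC^{1,\alpha}_{loc}(\rn)$ for all $\alpha\in(0,1)$.

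Next I would upgrade the regularity inside the two open sets where $Q_\o$ is constant. On $\o$ we have $-\Delta u = |u|^{p-2}u$, and on $\rn\ssm\overline{\o}$ we have $-\Delta u = -|u|^{p-2}u$; in both cases the right-hand side is a smooth function of $u$ away from the zero set of $u$ — but even at points where $u=0$ one can use that $s\mapsto |s|^{p-2}s$ is at least $\cC^{0,p-1}$ (or $\cC^{1}$ when $p\ge 2$), so with $u\in\cC^{1,\alpha}_{loc}$ already in hand, the composition $|u|^{p-2}u$ is Hölder continuous on compact subsets of $\o$ (resp. of $\rn\ssm\overline{\o}$). Schauder estimates then give $u\in\cC^{2,\alpha}_{loc}$ there, and a standard bootstrap (differentiating the equation, or iterating Schauder) yields $u\in\cC^{\infty}_{loc}(\o)\cap\cC^{\infty}_{loc}(\rn\ssm\overline{\o})$. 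For $p\ge 2$ the nonlinearity is $\cC^{1}$ globally which makes the first Schauder step immediate; for $p\in(2,2^*)$ (the regime actually used in the paper) this is exactly the case $p>2$, so the composition $u\mapsto u^{p-1}$ is $\cC^1$ and the bootstrap is completely routine.

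The main obstacle — really the only subtle point — is the behavior across the interface $\partial\o$, where $Q_\o$ jumps and hence $\Delta u$ is genuinely discontinuous. One cannot expect $u\in\cC^2$ near $\partial\o$, and this is why the statement only claims $W^{2,s}_{loc}(\rn)\cap\cC^{1,\alpha}_{loc}(\rn)$ globally. The correct way to handle this is to observe that globally the equation reads $-\Delta u = g$ with $g\in L^\infty_{loc}(\rn)$ (indeed $g = Q_\o|u|^{p-2}u$ with $u$ locally bounded and $Q_\o$ bounded), so $L^s$ elliptic regularity gives $u\in W^{2,s}_{loc}$ for every finite $s$, and this is sharp: the jump in $\Delta u$ across the smooth hypersurface $\partial\o$ prevents $W^{2,\infty}_{loc}$ but is compatible with every finite $s$ (and with $\cC^{1,\alpha}$ for every $\alpha<1$, by Morrey). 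I would make sure to cite the relevant $L^p$ elliptic estimate (e.g. \cite[Theorem 9.11]{gt}) and the Schauder estimate (\cite[Theorem 6.2]{gt}) at the appropriate places, and remark that the smoothness of $\partial\o$ is not needed for the $W^{2,s}$ conclusion but is what makes the decomposition into the two smooth pieces $\o$ and $\rn\ssm\overline{\o}$ meaningful.
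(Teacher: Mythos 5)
The paper gives no argument for this lemma at all: its ``proof'' is the single line ``See \cite[Lemma 2.4]{CHS25}.'' Your proposal is therefore being measured against the standard argument, and for the global claims it essentially \emph{is} that argument, carried out correctly: starting from $u\in D^{1,2}(\rn)\subset L^{2^*}_{loc}$, the Brezis--Kato/Calder\'on--Zygmund iteration gives $u\in L^q_{loc}(\rn)$ for every $q<\infty$ (the iteration $1/q\mapsto (p-1)/q-2/N$ escapes to $0$ in finitely many steps precisely because $p<2^*$ places the starting point $1/2^*$ below the fixed point $2/(N(p-2))$ -- you should replace the garbled sentence about ``$p-1<1<N/2$'' with this), hence $|u|^{p-1}\in L^\infty_{loc}$, hence $u\in W^{2,s}_{loc}(\rn)$ for all $s<\infty$ and $u\in\cC^{1,\alpha}_{loc}(\rn)$ for all $\alpha\in(0,1)$ by Morrey. (Minor quibble: a jump of $\Delta u$ across $\partial\o$ obstructs $\cC^2$, not $W^{2,\infty}_{loc}$; this side remark is harmless but inaccurate.)

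The genuine gap is in the step from $\cC^{2,\alpha}_{loc}$ to $\cC^{\infty}_{loc}$ inside $\o$ and $\rn\ssm\overline{\o}$. ``Iterating Schauder'' requires that at each stage the right-hand side gain one more derivative, i.e.\ that $t\mapsto|t|^{p-2}t$ be smooth along the range of $u$. This map is $\cC^\infty$ only away from $t=0$: for $p>2$ it is $\cC^1$ at the origin but only finitely differentiable there unless $p$ is an even integer, so your hedge ``the composition is $\cC^1$, hence the bootstrap is completely routine'' buys exactly one extra derivative and then stalls. For a sign-changing solution with an interior zero (and $p$ non-integer) the conclusion $u\in\cC^\infty_{loc}(\o)$ is simply not reachable by this route near the nodal set. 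The missing ingredient is positivity: for $p\geq 2$ a nontrivial nonnegative solution satisfies $-\Delta u + c\,u=\mathds 1_{\o}u^{p-1}\geq 0$ with $c:=\mathds 1_{\rn\ssm\o}\,u^{p-2}\geq 0$ locally bounded, so the strong maximum principle \cite[Theorem~9.6]{gt} gives $u>0$ everywhere; then $|u|^{p-2}u=u^{p-1}$ is a $\cC^\infty$ function of $u$ on compact subsets of $\o$ and of $\rn\ssm\overline{\o}$, and the Schauder bootstrap genuinely runs to $\cC^\infty$. You should either insert this positivity step (which covers every use of the lemma in the paper) or restrict the $\cC^\infty$ clauses to the complement of the zero set of $u$; as written, your proof does not establish them for arbitrary solutions.
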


\begin{proof}
    See \cite[Lemma 2.4]{CHS25}.
\end{proof}

\begin{theorem}\label{thm:asym1}
For $p\in (1,2^*)$, let $v_p\in \cS_{\o,p}$ be a nonnegative minimizer of \eqref{alpha_1}. Then $v_p\to\phi_1$ in $D^{1,2}(\rn)$ as $p\to 2$, where $\phi_1$ is the positive eigenfunction of \eqref{P_lambdaI} with eigenvalue $\Lambda_1$ such that $\irn Q_\Omega \phi_1^2 =1$. Moreover, 
    \begin{align}\label{asymp}
    \lim_{p\to 2}\left( \frac{\Lambda_1}{\alpha_p}\right)^{\frac{1}{2-p}}
    =e^{-\frac{1}{2}
    \int_{\mathbb R^N} Q_\Omega \phi_1^2 \ln(\phi_1^2)}.    
    \end{align}
\end{theorem}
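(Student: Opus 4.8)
\textbf{Proof proposal for Theorem \ref{thm:asym1}.}

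The plan is to mimic the variational argument used in Lemma \ref{lem:existence}, but tracking carefully the behavior of the $L^p$-constraint as $p\to 2$. First I would show boundedness and weak convergence: by Lemma \ref{lem:bounds for alpha}, $\|v_p\|^2=\alpha_p\in[a_0,a_1]$, so along a subsequence $p_n\to 2$ we have $v_{p_n}\rightharpoonup v$ weakly in $D^{1,2}(\rn)$, $v_{p_n}\to v$ in $L^2_{loc}(\rn)$ and a.e. in $\rn$, with $v\geq 0$. The key technical point is to pass to the limit in the constraint $\irn Q_\o|v_{p_n}|^{p_n}=1$. On the bounded set $\o$ this is routine: $|v_{p_n}|^{p_n}\to v^2$ in $L^1(\o)$ by e.g. uniform integrability (using the $L^{2^*}$-bound on $\o$ and $p_n<2^*$), so $\io|v_{p_n}|^{p_n}\to\io v^2$. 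On the complement, Fatou's lemma gives $\int_{\rn\ssm\o}v^2\leq\liminf\int_{\rn\ssm\o}|v_{p_n}|^{p_n}$ (care is needed here since $|v_{p_n}|^{p_n}$ need not dominate anything pointwise as $p_n$ crosses $2$ — one splits into the regions $\{|v_{p_n}|\leq 1\}$ and $\{|v_{p_n}|>1\}$ and uses that on the first region $|v_{p_n}|^{p_n}\leq|v_{p_n}|^{\min(p_n,2)}$ up to controlling the low exponent by the $L^2\cap L^{2^*}$ bound, while on the second $|v_{p_n}|^{p_n}\to v^2$ a.e.\ with a tail bound). This yields $\irn Q_\o v^2\geq 1$, hence $v\neq 0$, and then weak lower semicontinuity plus the normalization $\wt v:=v/(\irn Q_\o v^2)^{1/2}\in\cS_\o$ gives
\[
\Lambda_1\leq\|\wt v\|^2=\frac{\|v\|^2}{\irn Q_\o v^2}\leq\|v\|^2\leq\liminf_n\alpha_{p_n}.
\]

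For the matching upper bound I would use a good test function: take $\phi_1\in\cS_\o$ the positive first eigenfunction and set $g_p:=\phi_1/(\irn Q_\o|\phi_1|^p)^{1/p}\in\cS_{\o,p}$, which is well-defined for $p$ near $2$ since $\irn Q_\o|\phi_1|^p\to\irn Q_\o\phi_1^2=1$ by dominated convergence (using the exponential decay of $\phi_1$ from Theorem \ref{decay:thm} and its boundedness to dominate $|\phi_1|^p$ uniformly for $p$ in a neighborhood of $2$). Hence $\alpha_p\leq\|g_p\|^2=\Lambda_1/(\irn Q_\o|\phi_1|^p)^{2/p}\to\Lambda_1$. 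Combining, $\alpha_{p_n}\to\Lambda_1$, and all the inequalities above become equalities, forcing $\irn Q_\o v^2=1$, $\|v\|^2=\Lambda_1$, so $v\in\cS_\o$ is a minimizer; by the simplicity of $\Lambda_1$ and positivity (Theorem \ref{thm:existence of eigenvalues}(iii), (iv)), $v=\phi_1$. Since the limit is independent of the subsequence, the full limit $v_p\to\phi_1$ holds, and from $\|v_p\|^2\to\|\phi_1\|^2$ together with weak convergence we upgrade to strong convergence in $D^{1,2}(\rn)$.

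It remains to prove the asymptotic formula \eqref{asymp}. Write $\beta_p:=\irn Q_\o|\phi_1|^p$; the sharp upper bound above gives $\alpha_p\leq\Lambda_1\beta_p^{-2/p}$, and the sharp lower bound can be refined using $v_p$ itself as a near-optimal competitor for the $p$-problem centered at $\phi_1$, so that $(\Lambda_1/\alpha_p)$ is squeezed between quantities asymptotic to $\beta_p^{2/p}$ and $(\irn Q_\o|v_p|^2)$-type corrections that $\to 1$; the upshot is $\Lambda_1/\alpha_p=\beta_p^{2/p}(1+o(1))$ in the sense that $(\Lambda_1/\alpha_p)^{1/(2-p)}=\beta_p^{2/(p(2-p))}(1+o(1))^{1/(2-p)}$, and the error term must be controlled to be $e^{o(1)}$. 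Then one computes the limit of $\beta_p^{1/(2-p)}$: since $\beta_p=\irn Q_\o|\phi_1|^p=\irn Q_\o\phi_1^2\cdot\phi_1^{p-2}$ and $\phi_1^{p-2}=e^{(p-2)\ln\phi_1}=1+(p-2)\ln\phi_1+o(p-2)$ (integrated against the finite signed measure $Q_\o\phi_1^2\,\d x$, whose total mass is $1$), we get $\beta_p=1+(p-2)\irn Q_\o\phi_1^2\ln\phi_1+o(p-2)$, hence $\beta_p^{1/(2-p)}\to e^{-\irn Q_\o\phi_1^2\ln\phi_1}$, and therefore $\beta_p^{2/(p(2-p))}\to e^{-\irn Q_\o\phi_1\ln\phi_1}$ (absorbing $\phi_1^2$ into $\phi_1\cdot\phi_1$, consistent with the stated formula). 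The main obstacle I anticipate is precisely this last step: justifying that the difference between $\alpha_p$ and its sharp test-function upper bound $\Lambda_1\beta_p^{-2/p}$ is $o(p-2)$ — i.e. that $v_p$ is optimal to \emph{second order} near $p=2$, not merely to first order — and handling the logarithmic terms $\ln\phi_1$ near the zero set of $\phi_1$ at infinity (where $\ln\phi_1\to-\infty$), which requires the sharp two-sided decay $\phi_1\sim|x|^{-(N-1)/2}e^{-\sqrt{\Lambda_1}|x|}$ from Theorem \ref{decay:thm} to ensure $Q_\o\phi_1^2\ln\phi_1\in L^1(\rn)$ and to make the Taylor expansion under the integral sign rigorous via dominated convergence.
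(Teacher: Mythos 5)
Your proof of the convergence $v_p\to\phi_1$ in $D^{1,2}(\rn)$ follows essentially the same route as the paper: uniform bounds from Lemma \ref{lem:bounds for alpha}, weak convergence along a subsequence, passage to the limit in the constraint (the paper replaces your uniform-integrability argument on $\o$ by an explicit interpolation bound $\bigl|\io v_n^{p_n}-\io v_n^2\bigr|\leq C|p_n-2|\io(v_n^{2}+v_n^{2+\delta})$, and on $\rn\ssm\o$ plain Fatou suffices since the integrands are nonnegative and converge a.e., so no domination is needed there), the test function $\phi_1/(\irn Q_\o\phi_1^p)^{1/p}$ for the upper bound, and simplicity of $\Lambda_1$ to identify the limit. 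This part is fine.

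The proof of \eqref{asymp}, however, has a genuine gap, which you yourself flag. Your strategy requires showing that $\Lambda_1/\alpha_p=\beta_p^{2/p}\,(1+o(p-2))$, i.e.\ that the test-function bound $\alpha_p\leq\Lambda_1\beta_p^{-2/p}$ is sharp to \emph{second} order in $p-2$. The variational argument of the first part only gives $\alpha_p\to\Lambda_1$, i.e.\ sharpness to order $o(1)$, and since the exponent $1/(2-p)$ blows up, an error of size $o(1)$ rather than $o(p-2)$ in $\ln(\Lambda_1/\alpha_p)$ destroys the limit. Moreover the test-function inequality is one-sided (it bounds $(\Lambda_1/\alpha_p)^{1/(2-p)}$ from above for $p>2$ and from below for $p<2$), so a matching bound from the other side is indispensable and is not supplied. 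The paper circumvents this entirely by exploiting the Euler--Lagrange equations rather than the minimization: since $u_p=\alpha_p^{1/(p-2)}v_p$ solves \eqref{main} and $\phi_1$ solves \eqref{P_lambdaI}, cross-testing gives the exact identity $\irn Q_\o\phi_1 v_p\bigl(\tfrac{\alpha_p}{\Lambda_1}v_p^{p-2}-1\bigr)=0$; writing the bracket via the fundamental theorem of calculus applied to $s\mapsto\bigl((\alpha_p/\Lambda_1)^{1/(p-2)}v_p\bigr)^{s(p-2)}$ produces a closed formula for $\tfrac{1}{p-2}\ln(\alpha_p/\Lambda_1)$ as a ratio of integrals, whose limit is then computed by dominated convergence using the strong convergence $v_p\to\phi_1$, an $L^{2^*}$ pointwise dominating function, and the exponential decay of $\phi_1$. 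If you wish to complete your route instead, you would need to prove the second-order expansion of $\alpha_p$ directly (e.g.\ by expanding the Rayleigh-type quotient at $v_p$ around $\phi_1$ and using a quadratic nondegeneracy estimate for the minimizer), which is substantially more work than the identity the paper uses.
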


\begin{proof}
Fix $\delta>0$ so that  $[2-\delta,2+\delta]\subset(1,2^*)$. Let $p_n\in (1,2^*)$ be such that $p_n\to 2$ and $v_n\in S_{\o,p_n}$ be such that $\alpha_{p_n}=\|v_n\|^2$. By Lemma~\ref{lem:bounds for alpha} we have that $(v_n)$ is uniformly bounded in $D^{1,2}(\rn)$. Hence, passing to a subsequence,  $v_n\rh v$ weakly in $D^{1,2}(\rn)$, $v_n\to v$ in $L_{loc}^q(\rn)$ for every $q\in [2-\delta,2+\delta]$, and $v_n\to v$ a.e. in $\rn$. Then, $v\geq 0$. We claim that
    \begin{align}\label{claim}
        \lim_{n\to\infty}\io v_n^{p_n}=\lim_{n\to\infty}\io v_n^2=\io v^2<\infty. 
    \end{align}
For $n$ large enough, we have that $2s+(1-s)p_n\in (2-\delta/2,2+\delta/2)$ for all $s\in (0,1)$.  Since $\lim_{x\to 0^+}x^{\delta/2}\ln(x)=0$ and $\lim_{x\to\infty}\ln(x)/x^{\delta}=0$, there exists $C=C(\delta)>0$ such that $x^{\delta/2}|\ln(x)|\leq C x^{\delta}$ for all $x\in (0,\infty)$,  and
    \begin{align*}
        \left|\io v_n^{p_n}-\io v_n^2\right|&\leq|p_n-2| \io\int_0^1v_n^{2s+(1-s)p_n}|\ln(v_n)| \d s \d x\\
        &\leq|p_n-2| \io(v_n^{2-\delta/2}+v_n^{2+\delta/2})|\ln(v_n)|\d x
        \leq C|p_n-2|\io(v_n^{2}+v_n^{2+\delta})\d x=o(1) 
    \end{align*}
    as $n\to\infty$, which yields \eqref{claim}. Furthermore, as $v_n\in \cS_{\o,p_n}$, Fatou's lemma and \eqref{claim} yield 
\begin{align*}
\int_{\rn\smallsetminus \o}v^2\leq\liminf_{n\to\infty}\int_{\rn\smallsetminus \o}|v_n|^{p_n}=\lim_{n\to\infty}\io|v_n|^{p_n}-1=\io v^2-1.
    \end{align*}
This shows that $v\in L^2(\rn)$ and that $1\leq\irn Q_\o v^2$. In particular, $v\neq 0$. Hence, 
\begin{align}\label{d1}
\liminf_{n\to\infty}\alpha_{p_n}=\liminf_{n\to\infty}\|v_n\|^2\geq\|v\|^2\geq\frac{\|v\|^2}{\irn Q_\o v^2}\geq\Lambda_1.
\end{align}
On the other hand, by Theorem~\ref{decay:thm}  and Lemma~\ref{regularity}, the first positive eigenfunction $\phi_1$ of \eqref{P_lambdaI} with $\irn Q_\Omega\phi_1^2=1$, satisfies $\phi_1\in L^p(\rn)$ for every $p\in [1,\infty)$. Fixing $\alpha\in (0,\sqrt{\Lambda_1})$, there exists $C>0$ such that $\phi_1^{p_n}(x)\leq Ce^{-\alpha |x|}$ for large enough $n$, and the dominated convergence theorem implies that $\lim_{p\to 2}\int_{\rn} Q_\Omega\phi_1^p=\irn Q_\Omega\phi_1^2=1$. Therefore, 
    \begin{align}\label{d2}
\Lambda_1=\|\phi_1\|^2=\lim_{n\to\infty}\frac{\|\phi_1\|^2}{\Big(\irn Q_\o\phi_1^{p_n}\Big)^{2/p_n}} \geq \limsup_{n\to\infty}\alpha_{p_n}.
    \end{align}
     From \eqref{d1} and \eqref{d2} we get $\irn Q_\o v^2=1$,  $\|v\|^2=\Lambda_1$, $v_n\to v$ strongly in $D^{1,2}(\rn)$, and
     \begin{align}\label{converge alpha-p}
         \lim_{n\to\infty}\alpha_{p_n}=\Lambda_1.
     \end{align}
Since $\text{dim}(E_1)= 1$, we have that $v=\phi_1$.

It remains to show \eqref{asymp}. Since $u_n:=\alpha_{p_n}^\frac{1}{p_n-2} v_{n}$ solves \eqref{main}, we have that     
     \begin{align*}
     \irn \Lambda_1Q_\Omega\phi_1v_n=\irn \nabla\phi_1\cdot\nabla v_n=\irn\alpha_{p_n}Q_\o v_n^{p-1}\phi_1.    
     \end{align*}
     Hence,
     \begin{align*}
0&=\irn Q_\Omega\phi_1v_n\left(\frac{\alpha_{p_n}}{\Lambda_1}v_n^{p_n-2}-1\right)=\irn Q_\Omega\phi_1v_n\int_0^1\frac{d}{ds}\Big(\Big(\frac{\alpha_{p_n}}{\Lambda_1}\Big)^{\frac{1}{p_n-2}}v_n\Big)^{s(p_n-2)} \d s \d x\\
         &=(p_n-2)\irn Q_\Omega\phi_1 v_n\int_0^1\ln\Big(\Big(\frac{\alpha_{p_n}}{\Lambda_1}\Big)^{\frac{1}{p_n-2}}v_n\Big)\Big(\Big(\frac{\alpha_{p_n}}{\Lambda_1}\Big)^{\frac{1}{p_n-2}}v_n\Big)^{s(p_n-2)}\d s\d x\\
         &=(p_n-2)\irn Q_\Omega\phi_1v_n\left(\frac{1}{p_n-2}\ln\Big(\frac{\alpha_{p_n}}{\Lambda_1}\Big)+\ln(v_n)\right)\int_0^1\left(\frac{\alpha_{p_n}}{\Lambda_1}\right)^sv_n^{s(p_n-2)}\d s\d x. 
         \end{align*}
Since $v_n\to\phi_1$ in  $D^{1,2}(\rn)$ there exists $h\in L^{2^*}(\rn)$ such that, after passing to a subsequence, $v_n\leq h$ a.e. in $\rn$ (see, for instance, \cite[Lemma A.1]{willem1996minimax}). By \eqref{converge alpha-p}, there exists $M>1$ such that
         \begin{align*}
             \left(\frac{\alpha_{p_n}}{\Lambda_1}\right)^{s}\leq M\quad\text{for all}\;s\in(0,1)\;\text{and}\;n\in\mathbb{N}.
         \end{align*}
          Since $p_n\to2,$ we have that $s(p_n-2)+1\in(1/2,2)$ for $n$ large enough. By Theorem~\ref{decay:thm}, there exists $C>0$ such that $\phi_1\leq Ce^{-\sqrt{\Lambda_1}|x|}$ for all $x\in\rn$. It follows that
         \begin{align*}
0<v_n\phi_1\left(\frac{\alpha_{p_n}}{\Lambda_1}\right)^sv_n^{s(p_n-2)}\leq M\phi_1h^{s(p_n-2)+1}
             \leq MCe^{-\sqrt{\Lambda_1} |x|}(h^{1/2}+h^2).
         \end{align*}
     On the other hand, fixing $\varepsilon\in(0,\frac{1}{2})$ such that $2+\varepsilon<2^{*}$, we have that
     \begin{align*}
         \Big|v_n\phi_1\ln(v_n)\Big(\frac{\alpha_{p_n}}{\Lambda_1}\Big)^sv_n^{s(p_n-2)}\Big|& \leq MC_\varepsilon\phi_1v_n^{s(p_n-2)+1+\varepsilon} \leq MC_\varepsilon\phi_1(h^{\frac{1}{2}+\varepsilon}+h^{2+\varepsilon})\\
         & \leq MC_\varepsilon C e^{-\sqrt{\Lambda_1}|x|}(h^{\frac{1}{2}+\varepsilon}+h^{2+\varepsilon}),
     \end{align*}
     where $C_\varepsilon>0$ is such that $|x\ln(|x|)|\leq C_\varepsilon |x|^{1+\varepsilon}$ for all $x\in\r$. Then, by dominated convergence theorem, 
     \begin{align*}
\irn\int_0^1 Q_\Omega v_n\phi_1\Big(\frac{\alpha_{p_n}}{\Lambda_1}\Big)^sv_n^{s(p_n-2)}\d s\d x>0
     \end{align*}
for $n$ large enough. Moreover,
\begin{align*}
    \frac{\ln\big(\frac{\alpha_{p_n}}{\Lambda_1}\big)}{p_n-2}=-\frac{\irn\int_0^1Q_\Omega v_n\phi_1\ln(v_n)\left(\frac{\alpha_{p_n}}{\Lambda_1}\right)^sv_n^{s(p_n-2)}\d s\d x}{\irn\int_0^1Q_\Omega v_n\phi_1\left(\frac{\alpha_{p_n}}{\Lambda_1}\right)^sv_n^{s(p_n-2)}\d s\d x}
\end{align*}
and, using that $\irn Q_\Omega\phi_1^2=1$,
\begin{align*}
    \ln\Big(\Big(\frac{\alpha_{p_n}}{\Lambda_1}\Big)^{\frac{1}{p_n-2}}\Big)=\frac{\irn Q_\Omega\phi_1^2\ln(\phi_1)}{\irn Q_\Omega\phi_1^2}+o(1)=-\frac{1}{2}\irn Q_\Omega\phi_1^2\ln(\phi_1^2)+ o(1),
\end{align*}
as claimed. 
\end{proof}

Theorem~\ref{Thm:asym} stated in the introduction is now an immediate consequence of the previous result.

\subsection{Uniqueness of minimizers and nondegeneracy of positive solutions}\label{u:sec}

In this section we prove Theorem~\ref{exit p small}. The proof follows the strategy used in \cite{dis23}, but several adaptations are needed in order to control the lack of compactness at spatial infinity.

For $p>2,$ consider the linearized problem at a positive solution $u$ to problem \eqref{main} given by
\begin{equation}
\label{linearized}
-\Delta h= Q_\Omega (p-1)u^{p-2}h\qquad \text{ in } \rn.
\end{equation}
A solution $u$ to \eqref{main} is said to be \emph{nondegenerate} if \eqref{linearized} has only the trivial solution $h=0$, i.e. if $\mu=0$ is not an eigenvalue of the linearized operator $L_u:=-\Delta -Q_\Omega (p-1)u^{p-2}$.  The spectrum and the associated eigenfunctions for $L_u$ can be obtained analogously to Section~\ref{sec:eigen}. 

We first need to prove some auxiliary results.  We begin with a nonexistence result. 

\begin{lemma}\label{nonex:lem}
Let $U\subset \rn$ be an open bounded set such that $\Lambda_1(U)=1.$ If $\Omega\subset \rn$ is an open set such that $U\subset \Omega$, $U\neq \Omega,$ then the problem 
\begin{align}\label{ulin}
-\Delta u = Q_\Omega u\quad \text{ in }\rn,\qquad u>0\quad\text{ in }\rn,
\end{align}
does not admit a bounded positive solution. As a consequence, if $\Omega = \rn_+:=\{x\in \rn\::\: x_N>0\}$ is a halfspace or if $\Omega=\rn$, then \eqref{ulin} does not admit a bounded positive solution. 
\end{lemma}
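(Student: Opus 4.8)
The plan is to argue by contradiction. Suppose that $u$ is a bounded positive solution to \eqref{ulin}. The key idea is to use the eigenfunction $\phi_1^U$ of the domain $U$ (with $\Lambda_1(U)=1$) as a test function against $u$ via a Picone-type argument, exactly in the spirit of Lemma \ref{schlem}. Since $U\subsetneq\Omega$, the sign-changing weight $Q_\Omega$ is strictly larger than $Q_U$ on the nonempty set $\Omega\setminus U$ (it equals $+1$ there instead of $-1$), and this strict gain is what will produce the contradiction with $\Lambda_1(U)=1$.

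Concretely, let $\phi:=\phi_1^U\in\cS_U$ be the positive first eigenfunction for $U$, so that $-\Delta\phi=Q_U\phi$ in $\rn$, $\phi>0$, $\|\phi\|^2=\irn Q_U\phi^2=1$, and $\phi$ decays exponentially by Theorem \ref{decay:thm}. For $\eps>0$ set $\psi_\eps:=\phi^2/(u+\eps)$. As in Lemma \ref{schlem}, one checks that $\psi_\eps\in H^1(\rn)$: it is $\cC^1$ by the regularity of $u$ (Lemma \ref{regularity}, applied to the domain $\Omega$) and of $\phi$, it is bounded by $\phi^2/\eps$, and $|\nabla\psi_\eps|\le 2\eps^{-1}\phi|\nabla\phi|+\eps^{-2}\phi^2|\nabla u|$, which lies in $L^2(\rn)$ by the exponential decay of $\phi,\nabla\phi$ together with the boundedness of $u$ and local integrability of $\nabla u$ (and, away from a large ball, the decay of $u$, or simply the $D^{1,2}$ bound). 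Testing \eqref{ulin} with $\psi_\eps$ and using Picone's inequality \eqref{picone} applied to $u+\eps$ and $\phi$,
\begin{align*}
\irn Q_\Omega\, u\,\psi_\eps=\irn\nabla u\cdot\nabla\psi_\eps=\irn\nabla(u+\eps)\cdot\nabla\!\left(\frac{\phi^2}{u+\eps}\right)\le\irn|\nabla\phi|^2=1.
\end{align*}
On the left, $u\psi_\eps=\frac{u}{u+\eps}\phi^2\to\phi^2$ pointwise and $0\le\frac{u}{u+\eps}\phi^2\le\phi^2\in L^1(\rn)$, so dominated convergence gives $\irn Q_\Omega\phi^2\le 1$. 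But $\irn Q_\Omega\phi^2=\irn Q_U\phi^2+2\int_{\Omega\setminus U}\phi^2=1+2\int_{\Omega\setminus U}\phi^2>1$ since $\phi>0$ and $|\Omega\setminus U|>0$, a contradiction. (If $\Omega\setminus U$ has empty interior but positive measure the same conclusion holds; if it were null then $U=\Omega$ up to null sets and one would instead contradict $U\ne\Omega$ at the level of the weights.)

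For the consequence, one only needs to exhibit, for $\Omega=\rn_+$ or $\Omega=\rn$, some bounded open $U$ with $U\subsetneq\Omega$ and $\Lambda_1(U)=1$. By Remark \ref{scaling}(2), $\Lambda_1(tB_1)=t^{-2}\Lambda_1(B_1)$, so choosing $r_0:=\sqrt{\Lambda_1(B_1)}$ gives $\Lambda_1(B_{r_0})=1$; a suitable translate of $B_{r_0}$ lies strictly inside $\rn_+$, and $B_{r_0}$ itself lies strictly inside $\rn$. Then the first part (which only used that $\Omega$ is open, not bounded or smooth) applies and rules out a bounded positive solution of \eqref{ulin} in these cases as well.

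I expect the main technical obstacle to be the justification that $\psi_\eps\in H^1(\rn)$ and, more importantly, that the integration by parts $\irn\nabla u\cdot\nabla\psi_\eps=\irn Q_\Omega u\psi_\eps$ is legitimate on the whole of $\rn$: one must argue that the boundary terms at infinity vanish, which requires the decay of $\phi$ and $\nabla\phi$ (Theorem \ref{decay:thm}) to dominate the at-worst-bounded behavior of $u$, and a density/cutoff argument to pass from $\cC_c^\infty$ test functions to $\psi_\eps$. A secondary subtlety is that Theorem \ref{decay:thm}, as stated, is proved for the eigenfunctions $\phi_k$ normalized in $\cS_\Omega$, but its proof only uses $\Lambda_1(U)=1$, $\Omega$ bounded, and the maximum principle, so it transfers verbatim to $\phi_1^U$; one should remark this explicitly.
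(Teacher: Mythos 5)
Your argument is correct in substance, but it takes a more roundabout technical route than the paper's. Both proofs hinge on the same mechanism: pair the solution $u$ with the first eigenfunction $\phi_U$ of $U$ and exploit that $Q_\Omega-Q_U\geq 0$ everywhere with $Q_\Omega-Q_U=2$ on $\Omega\smallsetminus U$. The paper simply tests the two equations against each other (Green's identity), obtaining $\int_{\rn}Q_U u\phi_U=\int_{\rn}u(-\Delta\phi_U)=\int_{\rn}(-\Delta u)\phi_U=\int_{\rn}Q_\Omega u\phi_U$, hence $\int_{\rn}(Q_\Omega-Q_U)u\phi_U=0$, which is impossible since the integrand is nonnegative and strictly positive on $\Omega\smallsetminus U$. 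Your Picone-type argument with $\psi_\eps=\phi_U^2/(u+\eps)$ reaches the analogous contradiction $\int_{\rn}(Q_\Omega-Q_U)\phi_U^2\leq 0$; it works, but it costs you the $\eps$-regularization, the $H^1$ membership of $\psi_\eps$, and a dominated convergence step, none of which are needed for the symmetric integration by parts (whose justification only requires the exponential decay of $\phi_U$ and $\nabla\phi_U$ against cutoffs). One point to fix in your write-up: you invoke ``the $D^{1,2}$ bound'' on $u$ to control $\phi_U^2|\nabla u|$ in $L^2$, but $u$ is only assumed to be a bounded positive solution, and no global integrability of $\nabla u$ is part of the hypothesis (indeed, in the application in Lemma \ref{pclose1} the limit $w$ is merely a bounded pointwise solution). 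Instead, derive $\nabla u\in L^\infty(\rn)$ from interior elliptic gradient estimates (the right-hand side $Q_\Omega u$ is uniformly bounded), which together with the exponential decay of $\phi_U$ makes both your $L^2$ bound and the cutoff argument for the integration by parts go through. The edge case $|\Omega\smallsetminus U|=0$ that you flag is not actually resolved by your parenthetical remark, but the paper's proof shares this (harmless for the applications) imprecision, and in the halfspace/whole-space reduction one always has $|\Omega\smallsetminus U|>0$.
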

\begin{proof}
Assume, by contradiction, that $u$ is a bounded positive solution to problem \eqref{ulin}.
By maximum principles, arguing as in Theorem~\ref{decay:thm}, we have that $u$ must have exponential decay and, therefore, $u\in X^p.$ Let $\phi_U$ denote the eigenfunction associated to the first eigenvalue $\Lambda_1(U)=1$. Testing \eqref{ulin} with $\phi_U$,
\begin{align*}
 \int_{\rn}Q_U u\phi_U = \int_{\rn} u (-\Delta \phi_U)=\int_{\rn}(-\Delta u) \phi_U = \int_{\rn}Q_\Omega u\phi_U,
\end{align*}
and, therefore, 
\begin{align}\label{uphieq}
\int_{\rn}(Q_\Omega - Q_U)u\phi_U=0.    
\end{align}
On the other hand, since $U\subset \Omega,$ we have $Q_\Omega - Q_U\geq 0$ in $\rn$ and $Q_\Omega - Q_U=2>0$ in $\Omega\backslash U$, which contradicts \eqref{uphieq}, because $u\phi_U>0$ in $\rn$. 

In particular, if $\Omega = \rn_+$ or $\o=\rn$, then \eqref{ulin} does not admit a positive solution, because $\rn_+$ contains the ball $B_r(re_N)$, where $r>0$ is such that $\Lambda_1(B_r(re_N))=1.$
\end{proof}

\begin{lemma}\label{pclose1}
Let $(p_n)$ be a sequence in $(2,2^*)$ such that $p_n\to 2$, $u_n$ be a positive solution to \eqref{main} with $p=p_n$, and let $M_n:=|u_n|_{\infty}$. Then,
\begin{align*}
M_n^{p_n-2}\to \Lambda_1(\Omega) \qquad \text{and}\qquad \frac{u_n}{M_n}\to \phi_1 \text{ \ in \ }C^{2,\alpha}_{loc}(\Omega)\cap C^{2,\alpha}_{loc}(\rn\backslash\Omega)\cap C^{1,\alpha}_{loc}(\rn)\qquad\text{as \ }n\to\infty  
\end{align*}
for some $\alpha\in(0,1)$, where $\Lambda_1(\Omega)$ and  $\phi_1$ denote, respectively, the first eigenvalue and eigenfunction normalized so that $|\phi_1|_{\infty}=1$.
\end{lemma}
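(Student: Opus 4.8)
The plan is to run a blow-up/rescaling argument at the maximum. First I would normalize: set $w_n := u_n/M_n$, so that $|w_n|_\infty = 1$ and $w_n$ solves $-\Delta w_n = M_n^{p_n-2} Q_\Omega w_n^{p_n-1}$ in $\rn$. The key preliminary estimate is that the sequence $\lambda_n := M_n^{p_n-2}$ is bounded above and bounded away from $0$. Boundedness away from zero follows from testing the equation for $u_n$ against a fixed eigenfunction or using the Faber--Krahn-type lower bound for $\Lambda_1$ together with Lemma \ref{lem:bounds for alpha} (recall $u_p = \alpha_p^{1/(p-2)} v_p$ and $\alpha_p \geq a_0 > 0$, which forces $M_n$ to stay bounded away from a regime making $\lambda_n \to 0$). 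For the upper bound: if $\lambda_n \to \infty$ along a subsequence, then by elliptic regularity (Lemma \ref{regularity}) and the uniform $L^\infty$ bound on $w_n$, a rescaling $\tilde w_n(x) := w_n(x_n + \lambda_n^{-1/2} x)$ around a point $x_n$ where $w_n$ is close to $1$ would converge in $C^2_{loc}$ to a bounded positive solution of $-\Delta W = Q_{\Omega_\infty} W$ on a limiting domain $\Omega_\infty$, where $\Omega_\infty$ is either all of $\rn$ or a halfspace (the rescaled domain $\lambda_n^{1/2}(\Omega - x_n)$ flattens out since $\partial\Omega$ is smooth and bounded). This contradicts Lemma \ref{nonex:lem}. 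Hence $\lambda_n$ is bounded, and passing to a subsequence $\lambda_n \to \lambda_\infty \in (0,\infty)$.

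Next I would pass to the limit at fixed scale. With $\lambda_n$ bounded and $|w_n|_\infty = 1$, elliptic estimates (interior $W^{2,s}$ and Schauder up to and across $\partial\Omega$, as in Lemma \ref{regularity}) give uniform bounds on $w_n$ in $C^{2,\alpha}_{loc}(\Omega)\cap C^{2,\alpha}_{loc}(\rn\setminus\overline\Omega)\cap C^{1,\alpha}_{loc}(\rn)$; after a subsequence $w_n \to w$ in these local topologies, with $w \geq 0$, $|w|_\infty \leq 1$, and $-\Delta w = \lambda_\infty Q_\Omega w$ in $\rn$. To see $w \not\equiv 0$ I would control the location of the maxima: choose $x_n$ with $w_n(x_n) = 1$; the exponential decay of solutions at infinity (the comparison-function argument of Theorem \ref{decay:thm}, applied uniformly in $n$ since $\Omega \subset B_R(0)$ and $\lambda_n$ is bounded below) shows the $x_n$ stay in a fixed compact set, so along a subsequence $x_n \to x_\infty$ and $w(x_\infty) = 1$; thus $w \neq 0$. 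Then $(\lambda_\infty, w)$ solves \eqref{P_lambdaI}, and since $w$ does not change sign (it is $\geq 0$), the strong maximum principle gives $w > 0$ in $\rn$, so by Theorem \ref{thm:existence of eigenvalues}(iii)--(iv), $w$ is a positive first eigenfunction and $\lambda_\infty = \Lambda_1(\Omega)$. Normalizing $\phi_1$ by $|\phi_1|_\infty = 1$ and using $\dim E_1 = 1$ forces $w = \phi_1$. Since every convergent subsequence has the same limit, the full sequences converge: $M_n^{p_n-2} = \lambda_n \to \Lambda_1(\Omega)$ and $w_n \to \phi_1$ in the stated topologies.

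The main obstacle I anticipate is the uniform control needed to keep the maxima $x_n$ from escaping to infinity and to rule out $\lambda_n \to \infty$ — both require a \emph{uniform-in-$n$} version of the exponential-decay estimate of Theorem \ref{decay:thm}, which in turn needs the lower bound $\lambda_n \geq c > 0$ (otherwise the decay rate degenerates). So the logical order matters: one must first establish $\liminf_n \lambda_n > 0$ (via the variational characterization, Lemma \ref{lem:bounds for alpha}, and Faber--Krahn), then use it to get uniform decay and hence compactness of $(x_n)$, and only then rule out $\lambda_n \to \infty$ by the blow-up and Lemma \ref{nonex:lem}. A secondary technical point is justifying that the rescaled domains converge to a halfspace or to $\rn$ and that the characteristic functions $Q_{\Omega_n}$ converge appropriately in $L^1_{loc}$ so that the limiting equation has the claimed form; this is routine given the smoothness and boundedness of $\partial\Omega$, but must be stated carefully.
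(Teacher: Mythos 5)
Your proposal is correct and follows essentially the same strategy as the paper: a blow-up at scale $\mu_n=M_n^{(2-p_n)/2}$ ruling out $M_n^{p_n-2}\to\infty$ via the nonexistence result of Lemma \ref{nonex:lem} on $\rn$ and on a halfspace, followed by local elliptic estimates at fixed scale and identification of the limit with $\phi_1$ through positivity and the simplicity of $\Lambda_1$. Two minor remarks. First, your justification of $\liminf_n M_n^{p_n-2}>0$ invokes the representation $u_p=\alpha_p^{1/(p-2)}v_p$, which is only available for least energy solutions, whereas the lemma concerns arbitrary positive solutions; the bound is still easy (test the equation with $u_n$ and use H\"older and Sobolev on the bounded set $\Omega$, as in Lemma \ref{lem:bounds for alpha}), and in fact the paper never isolates this step. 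Second, you confine the maximum points via a uniform-in-$n$ exponential decay estimate; the paper gets this for free from the observation that a global maximum $x_n$ of $u_n$ must lie in $\overline{\Omega}$, since $-\Delta u_n=-u_n^{p_n-1}<0$ outside $\overline{\Omega}$ while $-\Delta u_n(x_n)\geq 0$. This shortcut removes the ordering constraints you worry about at the end and simplifies the argument, but your route is also viable.
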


\begin{proof}
{\sl Step 1. We show that $M_n^{p_n-2}$ is bounded.}\\
By contradiction, assume that $M_n^{p_n-2}\to \infty$ and let $x_n\in\overline{\Omega}$ be such that $M_n=u_n(x_n)$ (the fact that $x_n\in \overline{\Omega}$ follows easily from \eqref{main}, since $-\Delta u_n(x_n)\geq 0$ whenever $x_n\not\in \partial \Omega,$ because $x_n$ is a global maximum). Define
\begin{align*}
w_n(y):=\frac{1}{M_n}u_n\left( \mu_ny+x_n\right) 
\end{align*}
where $\mu_n:=M_n^{\frac{2-p_n}{2}}\to 0$ as $n\to\infty$. Then $0 \leq w_n\leq 1$, $w_n(0) = 1$, and
\begin{align}\label{w_n eq}
-\Delta w_n = Q_{\Omega_n} w_n^{p_n-1}=:f_n\quad &\text{ in } \rn,
\end{align}
where $\Omega_n:=\left\{y\in\rn:\, \mu_ny+x_n\in \Omega \right\}$. Up to subsequences,
\begin{align}\label{2cases}
\text{  either }\quad  \operatorname{dist}\left(x_{n},\,\partial\Omega\right) \mu_{n}^{-1} \rightarrow+\infty \quad \text{ or } \quad \operatorname{dist}\left(x_{n},\,\partial\Omega\right) \mu_{n}^{-1} \rightarrow \rho \geq 0.
\end{align}
Assume the first case holds, so that $\Omega_n \rightarrow \rn$ as $n \rightarrow+\infty$. 
For every $R>0$ we claim that there exists $n_R\in\mathbb N$ and $C=C(N,\alpha,R)>0$
such that
\begin{align}
\label{vnlocReg}
w_n\in C^{2,\alpha}(\overline B_R) \ \mbox{ and }\ \|w_n\|_{C^{2,\alpha}(\overline B_R)}\leq C \quad \text{ for all } n\geq n_R
\end{align}
and all $\alpha\in(0,1)$. In order to prove \eqref{vnlocReg} let us fix $R>0$. Then, since $\Omega_n \rightarrow \rn$ as $n \rightarrow+\infty$, there exists $n_R\in\mathbb N$ such that $\overline B_{4R}\subset\Omega_n$ for any $n\geq n_R$.
Since $0\leq w_{n} \leq 1$ in $\rn$, then $|f_n|\leq 1$ in $\rn$. In particular, by elliptic regularity, $w_n\in C^{1,\alpha}(\overline B_{2R})$ for any $\alpha \in (0,1)$ and $\|w_n\|_{C^{1,\alpha}(B_{2R})}\leq C$ for some constant $C=C(N,\alpha,R)>0$. Then $f_n\in C^{\alpha}(B_{2R})$ with uniform $C^{\alpha}$-bound and so \eqref{vnlocReg} follows by elliptic regularity, see for instance \cite[B.1 Theorem]{Struwe}.

By \eqref{vnlocReg}, using the Arzel\`{a}-Ascoli theorem and a diagonal argument we obtain that there exists a function $w\in C_{loc}^{2,\frac{\alpha}{2}}(\rn)$ such that,  passing to a subsequence,  $w_{n} \rightarrow w$ in $C^{2,\frac{\alpha}{2}}_{loc}(\rn)$. Passing 
\eqref{w_n eq} to the limit, we see that $w$ solves $-\Delta w=w$ in $\rn$ pointwisely.
Furthermore, $0\leq w\leq 1$, so $w$ is bounded in $\rn$ and $w>0$ in $\rn$ by the maximum principle. However, by Lemma~\ref{nonex:lem}, no such $w$ can exist and we have reached a contradiction. 

If the second case in \eqref{2cases} holds then we may assume $x_{n} \rightarrow x_{0} \in \partial \Omega .$ With no loss of generality assume that $\nu(x_{0})=-e_{N}$, where $\nu$ denotes the exterior unit normal vector at $\partial \Omega$. Let
\begin{align*}
w_{n}(y):=\frac{u_{n}\left(\mu_{n} y+\xi_{n}\right)}{M_n},
\end{align*}
where $\xi_{n} \in \partial \Omega$ is the closest point to $x_{n}$ on $\partial \Omega$. Let $D_{n}:=\left\{y \in \mathbb{R}^{N}: \mu_{n} y+\xi_{n} \in \Omega\right\}$ and observe that
\begin{align}
\label{0inD}
0 \in \partial D_{n}
\end{align}
and $D_{n} \rightarrow \rn_+:=\left\{y \in \mathbb{R}^{N}: y_{N}>0\right\}$ as $n \rightarrow+\infty$.  It also follows that $w_{n}$ satisfies that
\begin{align}\label{v_n eq2}
-\Delta w_n = Q_{D_n}w_n^{p_n-1}\quad \text{ in } \rn.
\end{align}
Moreover, setting
$$
y_{n}:=\frac{x_{n}-\xi_{n}}{\mu_{n}}\ (\in \overline{D_n})
$$
we have that $\left|y_{n}\right|=\operatorname{dist}\left(x_{n},\,\partial\Omega\right) \mu_{n}^{-1}$ and $w_{n}(y_{n})=1$.  Now, arguing similarly as in the first case, we obtain that $w_{n} \rightarrow w$ in $C^{2,\frac{\alpha}{2}}_{loc}(\rn_{+})\cup C^{2,\frac{\alpha}{2}}_{loc}(-\rn_{+})\cup C^{1,\frac{\alpha}{2}}_{loc}(\rn)$, where $w$ satisfies that $0 \leq w \leq 1$ in $\rn$, $w(y_{0})=1$, and $w$ is a bounded solution to
 \begin{align*}
 -\Delta w=Q_{\rn_+}w \qquad  \text { in } \rn.
 \end{align*}
By the maximum principle, $w>0$ in $\rn$, which contradicts Lemma~\ref{nonex:lem}. This shows that $M_n^{p_n-2}$ is bounded and concludes the proof of {\sl Step 1}.

\medskip

{\sl Step 2. Conclusion.}\\
$M_n^{p_n-2}$ is bounded by {\sl Step 1.} Thus, up to a subsequence, $M_n^{p_n-2}\to \mu$. Let $z_n:=\frac{u_n}{M_n}$, then 
\begin{align*}
-\Delta z_n= Q_{\Omega} M_n^{p_n-2}\,z_n^{p_n-1}=:g_n \quad \text{   in } \rn,\qquad 
 0<z_n\leq 1 \quad \text{   in } \rn.
\end{align*}
Arguing as before, by elliptic regularity, $z_n$ converges to $z$ in $C_{loc}^{2,\frac{\beta}{2}}(\Omega)\cup C_{loc}^{2,\frac{\beta}{2}}(\rn\backslash\Omega)\cup C^{1,\frac{\beta}{2}}(\rn)$; furthermore, $0\leq z\leq 1$ and it  satisfies that
\begin{align}\label{zeq}
-\Delta z=Q_{\Omega}\mu z\quad \text{ in } \rn,\qquad 
z\geq 0\quad \text{ in }\rn,
\end{align}
Let $x_n\in\overline{\Omega}$ be such that $M_n=u_n(x_n)$, then  $1=\lim_{n\to \infty}z_n(x_n)=z(\bar{x})$, for some $\bar{x}\in \overline{\Omega}$. By the maximum principle $z>0$ in $\rn$ and then, using that only the first eigenfunction is positive and its eigenvalue is simple, we have that $\mu=\Lambda_1(\Omega)$, $z=\phi_1$. Note that any bounded solution to \eqref{zeq} must have the decay stated in Theorem~\ref{decay:thm} (by using a comparison argument as in the proof of this lemma) and, therefore, belongs to $H^1(\rn).$  This ends the proof.
\end{proof}

Next, we show a uniform estimate for normalized positive solutions.

\begin{lemma}\label{decayzn:lem}
Fix $q\in(2,2\frac{N-1}{N-2})$. Let $(p_n)$ be a sequence in $(2,q)$ such that $p_n\to 2$, $u_n$ be a positive solution to \eqref{main} with $p=p_n$, $M_n:=|u_n|_{\infty}$, and $z_n:=\frac{u_n}{M_n}$. There are $R>1$ and  $C>0$ such that
\begin{align}\label{cota}
    z_n(x)\leq C|x|^{-\frac{2}{q-2}}\qquad\text{if }|x|\geq R\text{ and for all }n\in \mathbb{N}.
\end{align}
\end{lemma}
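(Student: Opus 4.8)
The plan is to construct, for $|x|$ large, a supersolution of the form $\psi(x)=A|x|^{-\frac{2}{q-2}}$ and to compare it with $z_n$ on the exterior region $\{|x|>R\}$, uniformly in $n$. Since $\Omega\subset B_{R_0}(0)$ for some $R_0>0$, for $|x|>R_0$ we have $Q_\Omega(x)=-1$, so $z_n$ satisfies
\[
-\Delta z_n = -M_n^{p_n-2} z_n^{p_n-1}\le 0 \qquad\text{in }\rn\smallsetminus\overline{B_{R_0}(0)},
\]
i.e. $z_n$ is subharmonic out there, but this by itself only gives a bound like $|x|^{2-N}$, which is weaker than the claimed $|x|^{-2/(q-2)}$ when $q$ is close to $2$ (note $\frac{2}{q-2}>N-2$ precisely when $q<\frac{2N-2}{N-2}=p_s$, which is exactly the range assumed). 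So subharmonicity alone is not enough; I must exploit the nonlinear absorption term more carefully, or rather exploit the known decay of $z_n$ near the boundary of a large ball together with a rescaled nonlinear comparison.

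The key step is a barrier argument on an annulus that absorbs the nonlinearity. First I would fix $q'\in(q,p_s)$ (still below the Serrin exponent) so there is a little room. By Lemma \ref{pclose1}, $z_n\to\phi_1$ in $C^{1,\alpha}_{loc}(\rn)$ and $M_n^{p_n-2}\to\Lambda_1(\Omega)>0$; in particular, on any fixed large sphere $\partial B_R(0)$ the functions $z_n$ are uniformly bounded, say $z_n\le \bar c$ on $\partial B_R(0)$ for all large $n$. Now I would look for $\psi(x)=A|x|^{-\gamma}$ with $\gamma=\frac{2}{q-2}$; a direct computation gives $-\Delta\psi = A\gamma(\gamma+2-N)|x|^{-\gamma-2}$, and since $\gamma>N-2$ (because $q<p_s$) we have $\gamma+2-N>0$, so $-\Delta\psi>0$ for $x\ne 0$: $\psi$ is a (positive) supersolution of $-\Delta(\cdot)\ge 0$, hence in particular of $-\Delta(\cdot)\ge -M_n^{p_n-2}(\cdot)^{p_n-1}$ trivially. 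But then the comparison principle on $\{|x|>R\}$ only needs $\psi\ge z_n$ on $\partial B_R(0)$, i.e. $A R^{-\gamma}\ge \bar c$, so $A:=\bar c R^{\gamma}$ works, and one concludes $z_n\le \psi$ in $\{|x|>R\}$ provided one also controls behavior at infinity. Since $z_n\in H^1(\rn)$ decays (by the last remark in the proof of Lemma \ref{pclose1}, or directly from the exterior equation), $z_n\to 0$ at infinity, so $z_n-\psi\le 0$ on $\partial B_R(0)$ and $\limsup_{|x|\to\infty}(z_n-\psi)\le 0$, and $-\Delta(z_n-\psi)\le 0$ in the exterior domain; the maximum principle for subsolutions on the unbounded domain $\{|x|>R\}$ (valid because $z_n-\psi$ is subharmonic, bounded above, and has the right boundary behavior) gives $z_n\le\psi$ there.

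Wait — I should double-check the exponent: the bound claimed is $|x|^{-2/(q-2)}$, and with $\gamma=\frac{2}{q-2}$ the condition $\gamma>N-2$ reads $\frac{2}{q-2}>N-2$, i.e. $2>(q-2)(N-2)$, i.e. $q<\frac{2N-2}{N-2}=\frac{2N-2}{N-2}$; since $q<2\frac{N-1}{N-2}=\frac{2N-2}{N-2}$ is exactly the hypothesis, this is consistent, and $\psi$ is indeed superharmonic away from the origin, so the pure harmonic comparison suffices and the nonlinear term only helps (it has the favorable sign). The main obstacle is therefore not the construction of the barrier but ensuring the comparison is genuinely uniform in $n$: one needs a uniform upper bound $z_n\le\bar c$ on some fixed $\partial B_R(0)$ (supplied by the $C^1_{loc}$ convergence of Lemma \ref{pclose1}) and one needs each $z_n$ to vanish at infinity, which follows because the exterior equation $-\Delta z_n=-M_n^{p_n-2}z_n^{p_n-1}\le 0$ together with $z_n\in H^1(\rn)$ (again from Lemma \ref{pclose1}) forces decay; alternatively a Kelvin transform or a standard iteration on the exterior domain gives $z_n(x)=O(|x|^{2-N})$, which in particular tends to $0$. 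Once these two uniform facts are in place, the maximum principle on the exterior annular region $\{R<|x|\}$ closes the argument with $C=\bar c R^{2/(q-2)}$.
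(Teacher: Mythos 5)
There is a genuine gap, and it sits exactly at the point you flagged in your first paragraph and then talked yourself out of: the sign of $\Delta\bigl(|x|^{-\gamma}\bigr)$. For $\psi(x)=A|x|^{-\gamma}$ the correct computation is $\Delta\psi=A\gamma(\gamma+2-N)|x|^{-\gamma-2}$, hence $-\Delta\psi=-A\gamma(\gamma+2-N)|x|^{-\gamma-2}$, which is \emph{negative} when $\gamma>N-2$. Since $\gamma=\frac{2}{q-2}>N-2$ precisely because $q<\frac{2N-2}{N-2}$, your barrier $\psi$ is strictly \emph{subharmonic} away from the origin, not superharmonic, and the step ``$-\Delta(z_n-\psi)\le 0$ in the exterior domain'' fails: $z_n-\psi$ is a difference of two subharmonic functions and its Laplacian has no sign. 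This is consistent with your own (correct) observation that subharmonicity of $z_n$ alone can only give the rate $|x|^{2-N}$, which is strictly slower than $|x|^{-2/(q-2)}$ in the assumed range of $q$; a purely harmonic comparison cannot produce faster-than-fundamental-solution decay, so the nonlinear absorption term is not optional and does not merely ``help''.

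The repair is the argument the paper uses. One takes the \emph{exact} solution $\tau(x)=c_q^{1/(q-2)}|x|^{-2/(q-2)}$ of $-\Delta\tau=-\tau^{q-1}$ in $\rn\smallsetminus\{0\}$ (the constant $c_q$ is positive exactly when $q<2\frac{N-1}{N-2}$), sets $\psi_n:=\tau-z_n$, and writes the difference of the two nonlinearities as $a_n\psi_n$ with $a_n\ge 0$, where $a_n$ is the quotient $\frac{\tau^{p_n-1}-z_n^{p_n-1}}{\tau-z_n}M_n^{p_n-2}$. This yields $-\Delta\psi_n+a_n\psi_n=-\tau^{q-1}+M_n^{p_n-2}\tau^{p_n-1}$ outside a ball containing $\Omega$, and the right-hand side is nonnegative for $|x|\ge R$ with $R$ large, \emph{uniformly in $n$}, because $M_n^{p_n-2}\to\Lambda_1(\Omega)>0$ while $\tau^{q-p_n}$ is uniformly small there (here $p_n<q$ is used). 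Combined with the uniform positivity of $\psi_n$ on a fixed sphere $\partial B_{R_1}(0)$ (which follows from $z_n\to\phi_1$ in $C^{1,\alpha}_{loc}(\rn)$ and the exponential decay of $\phi_1$) and the vanishing of $z_n$ at infinity, the maximum principle gives $z_n\le\tau$ for $|x|\ge R_1$. Your uniformity inputs (Lemma \ref{pclose1} and the decay of $z_n$) are the right supporting facts, but they cannot substitute for the nonlinear barrier.
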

\begin{proof}
    Let $f(x)=|x|^{-\frac{2}{q-2}}$. By a simple computation, we have that $\Delta f=c_qf^{q-1}$ in $\rn\backslash\{0\},$ where $c_q=\frac{2(q-(q-2)(N-1))}{(q-2)^2}$ which is positive for $q\in(2,2\frac{N-1}{N-2})$. Thus, $\tau(x):=c_q^{1/q-2}f(x)$ satisfies that $-\Delta \tau=-\tau^{q-1}$ in $\rn\setminus\{0\}.$ Let $R>1$ be such that $\o\subset B_R(0)$ and define $\psi_n:=\tau-z_n$. Then, using that $u_n$ is a positive solution of \eqref{main} with $p=p_n$, we have that
\begin{align*}
    -\Delta\psi_n=-\tau^{q-1}+M_n^{p_n-2}z_n^{p_n-1}\quad\text{in }\rn\setminus B_R(0).
\end{align*}
Let 
\begin{align*}
    a_n(x):=\begin{cases}
        \frac{\tau(x)^{p_n-1}-z_n^{p_n-1}(x)}{\tau(x)-z_n(x)}M_n^{p_n-2},&\text{if } \tau(x)\neq z_n(x),\\
        0,&\text{if }\tau(x)=z_n(x). 
    \end{cases}
\end{align*}
Observe that $a_n\geq 0$ and
\begin{align}\label{acotamientouniforme}
    -\Delta\psi_n+a_n\psi_n=-\tau^{q-1}+M_n^{p_n-2}\tau^{p_n-1}\quad\text{in }\rn\setminus B_R(0).
\end{align}
Since $2<p_n<q$ and $M_n^{p_n-2}\to\Lambda_1(\o)$, note that
\begin{align}\label{acotamientouniforme2}
-\tau^{q-1}+M_n^{p_n-2}\tau^{p_n-1}
\geq \left(\frac{\Lambda_1}{2}-\tau^{q-p_n}\right)\tau^{p_n-1}\geq 0\quad\text{in }\rn\setminus B_R(0)
\end{align}
for $n$ and $R$ sufficiently large. Hence, by \eqref{acotamientouniforme} and \eqref{acotamientouniforme2}, there are $m\in\mathbb{N}$ and $R_0>R$ such that 
\begin{align*}
    -\Delta\psi_n+a_n\psi_n\geq0\qquad\text{in }\rn\setminus B_{R_0}(0)\text{ \ for }n\geq m.
\end{align*}
On the other hand, by Theorem~\ref{decay:thm}, $\phi_1(x)\leq C|x|^{-\frac{N-1}{2}}e^{-\sqrt{\Lambda_1(\o)}|x|}$. Hence, there is $R_1>R_0$ such that $\tau(x)-\phi_1(x)>0$ for all $x\in\rn\setminus \overline{B_{R_1}(0)}$. Since, by Lemma~\ref{pclose1}, $z_n\to\phi$ in $C_{loc}^{1,\alpha}(\rn)$ for some $\alpha\in(0,1)$, there is $\tilde{m}\in\mathbb{N}$ such that
\begin{align}\label{mp2}
\psi_n>0\qquad \text{ on  $\partial B_{R_1}(0)$ \ for all $n\geq\tilde{m}$.    }
\end{align}
 Since $z_n(x)\to 0$ as $|x|\to\infty$ (see Proposition~\ref{decayz}) and \eqref{acotamientouniforme2} and \eqref{mp2} hold, it follows from the maximum principle that $z_n(x)<\tau(x)$ for all $|x|\geq R_1$ and $n \geq \widetilde m$, as claimed.
\end{proof}

We are ready to prove our uniqueness and nondegeneracy result.

\begin{proof}[Proof of Theorem~\ref{exit p small}]
Let $(p_n)$ be a subsequence of $(2,2^*)$ such that $p_n\to 2$, $u_n$ be a solution to \eqref{main} with $p=p_n$, and $M_n:=|u_n|_{\infty}$.

{\sl Step 1. Nondegeneracy of positive solutions:}  Assume, by contradiction, that there exists a non-trivial solution $h_n$ to the linearized problem \eqref{linearized} with $p=p_n>2$ and $p_n\to 2$, namely,
\begin{align}
\label{linearizedproblemFonNondegeneracy}
-\Delta h_n=Q_\Omega (p_n-1)u_n^{p_n-2}h_n\ =:g_n \quad {\text{ in }} \rn.
\end{align}
We may assume that $|h_n|_{\infty}=1$. Using Lemma~\ref{pclose1},
\begin{align*}
|g_n|_{\infty}
\leq 2|u_n|^{p_n-2}_{\infty}
= 2M_n^{p_n-2}\leq C.
\end{align*} 
So, by elliptic regularity, $h_n\to h$ in $C^{1,\alpha}_{loc}(\rn)$. Furthermore, again by elliptic regularity, $h_n  \in C^\infty(\Omega)\cap C^{1}(\rn)\cap L^\infty(\rn)$ (the smoothness follows by a standard bootstrap procedure). Taking $h_n$ as a test function, by Lemma~\ref{pclose1},  we derive that
\begin{align*}
\|h_n\|^2
&=\int_{\rn} Q_\Omega (p_n-1)u_n^{p_n-2}h_n^2
\leq (p_n-1)\int_\Omega u_n^{p_n-2}h_n^2
\leq 2M_n^{p_n-2}|\Omega| |h_n|_{\infty}^2
= 2M_n^{p_n-2}|\Omega|\leq C.
\end{align*}

Hence, up to a subsequence, $h_n$ converges to $h$ weakly in  $D^{1,2}(\rn)$ and strongly in $L_{loc}^2(\rn)$. Letting $n\to\infty$ in the weak formulation of \eqref{linearizedproblemFonNondegeneracy} 
we obtain that $h$ is a weak solution of
\begin{align*}
-\Delta h = Q_\Omega \Lambda_1(\Omega) h \quad {\text{ in }} \rn,
\end{align*}
because, by Lemma~\ref{pclose1}, one has that
\begin{equation}\label{limitOfunp}
(p_n-1)u_n^{p_n-2} = (p_n-1)M_n^{p_n-2}\left(\frac{u_n}{M_n}\right)^{p_n-2}=\Lambda_1(\Omega)+o(1)\quad \text{pointwisely in $\rn$ as }n\to\infty\end{equation}
and
\begin{align}\label{limit2}
|(p_n-1)u_n^{p_n-2}|_{\infty}\leq \Lambda_1(\Omega)+1\qquad \text{ for all }n\in\mathbb N.
\end{align}

Note that both $h_n$ and $\phi_1$ achieve their supremum in $\overline{\Omega}$. Indeed, by (uniform) local regularity estimates and the fact that $h_n\in D^{1,2}(\rn)$, we have that $h_n$ decays uniformly to 0 at infinity. Let $(x_n)$ be the points in $\rn$ where $h_n(x_n)=|h_n|_\infty=1$, and assume, by contradiction, that $x_n\not\in \overline{\Omega}$. Then, $0\leq -\Delta h_n(x_n)= -(p_n-1)u_n(x_n)^{p_n-1}h_n(x_n)<0$, which yields a contradiction. That  $\phi_1$ achieves its supremum in $\overline{\Omega}$ can be shown similarly. 

Then, by the local convergence, $h=\phi_1$, where $\phi_1$ is the  eigenfunction associated to the first eigenvalue $\Lambda_1(\Omega)$ such that $|\phi_1|_\infty=1$. Note that, integrating by parts,
\begin{equation}\label{segno}
0=\int_{\rn} h_n(-\Delta u_n) - u_n(-\Delta h_n)\, dx
= \int_{\rn} Q_\Omega u_n^{p_n-1}h_n\, dx
=(2-p_n)M_n^{p_n-1}\int_{\rn} Q_\Omega z_n^{p_n-1}h_n\, dx.
\end{equation}
Using Lemmas~\ref{pclose1},~\ref{decayzn:lem}, and the fact that $|h_n|_\infty=1$, we can use dominated convergence to obtain that
\begin{align*}
0=\lim_{n\to\infty}\int_{\rn} Q_\Omega z_n^{p_n-1}h_n\, dx
=\int_{\rn} Q_\Omega \phi_1^2\, dx>0.    
\end{align*}
This yields a contradiction and the claim follows. 

\medskip

{\sl Step 2. Uniqueness of minimizers.} \\
Arguing by contradiction, assume that there is a sequence $(p_n)$ in $(2,2^*)$ such that $p_n\to2$ and, for each $n$, there are two different least energy solutions $u_n=\alpha_n^{\frac{1}{p_n-2}} v_n$ and $\hat{u}_n=\alpha_n^{\frac{1}{p_n-2}} \hat v_n$, where $\alpha_n:=\alpha_{p_n}$ and $v_n$ and $\hat v_n$ are different minimizers of $\alpha_n$ (see Section~\ref{sec:minimizers}). By Theorem~\ref{thm:asym1},
\begin{align}\label{vs}
v_n,\hat v_n \to c\phi_1\quad \text{ in  \ }D^{1,2}(\rn),
\end{align}
where $\phi_1$ is the positive first eigenfunction with $|\phi_1|_\infty=1$ and $c:=(\int_{\rn}Q_\Omega \phi_1^2)^{-1/2}$.  By Lemma~\ref{pclose1} we also know that 
\begin{align}\label{vs2}
\frac{v_n}{|v_n|_\infty}=\frac{u_n}{|u_n|_\infty}\to \phi_1\quad \text{ in  \ }C^{1,\alpha}_{loc}(\rn).
\end{align}
Let $\rho>0$ be such that $\int_{B_\rho(0)}|\nabla \phi_1|^2>0$. Then, by \eqref{vs} and \eqref{vs2},
\begin{align*}
0
=\lim_{n\to\infty}\int_{B_\rho(0)}\left|\nabla \left(\frac{v_n}{|v_n|_\infty}-\frac{v_n}{c}\right)\right|^2
=\lim_{n\to\infty}\int_{B_\rho(0)}\left|\nabla v_n\right|^2\left(\frac{1}{|v_n|_\infty}-\frac{1}{c}\right)^2
=c^2\int_{B_\rho(0)}\left|\nabla \phi_1\right|^2\lim_{n\to\infty}\left(\frac{1}{|v_n|_\infty}-\frac{1}{c}\right)^2.
\end{align*}
We conclude that 
\begin{align}\label{Ms}
|v_n|_\infty\to c\quad \text{ and, similarly, }\quad |\hat v_n|_\infty\to c.
\end{align}
Since $v_n\neq \hat v_n$ by assumption, the function $w_n:= \frac{v_n-\hat{v}_n}{|v_n-\hat{v}_n|_{\infty}}$ is a nontrivial solution of
\begin{align}\label{w_neq}
-\Delta w_n=Q_\Omega \alpha_n c_nw_n=:g_n \quad \text{ in } \rn
\end{align}
where, by the mean value theorem,
\begin{align*}
c_n(x):=\int_0^1 (p_n-1)(t v_n(x) +(1-t) \hat{v}_n(x))^{p_n-2} \d t.
\end{align*}
Since $t v_n(x) +(1-t) \hat{v}_n(x)$ is between $v_n(x)$ and $\hat{v}_n(x)$,  it follows that $c_n(x)$ is between $(p_n-1)v_n(x)^{p_n-2}$ and $(p_n-1)\hat{v}_n(x)^{p_n-2}$ for all $x\in\rn$. By \eqref{Ms}, passing to a subsequence, $|c_n|_{\infty}\leq 2$ for all $n\in\mathbb N$, $(p_n-1)v_n^{p_n-2}\to 1$, $(p_n-1)\hat{v}_n^{p_n-2}\to 1$ pointwisely in $\rn$, and 
\begin{align*}
c_n(x)\rightarrow 1\qquad \text{ pointwisely in \ }\rn.    
\end{align*}
We also know, by Theorem~\ref{thm:asym1}, that 
\begin{align}\label{alphalim}
\alpha_n\to \Lambda_1\qquad \text{ as }    n\to\infty.
\end{align}
Then $|g_n|_{\infty}\leq C$ and, by elliptic regularity, $w_n\to w$ locally uniformly in $\rn$, where $|w|_{\infty}=1$ and $w\neq 0$.
 Furthermore, testing \eqref{w_neq} with $w_n$,
 \begin{align}
\|w_n\|^2=\int_{\rn}Q_\Omega \alpha_n c_n w_n^2
\leq \int_{\Omega}\alpha_n c_n w_n^2
\leq C|\Omega|.
 \end{align}
Hence, up to a subsequence, $w_n$ converges to $w$ weakly in  $D^{1,2}(\rn)$ and in $L_{loc}^2(\rn)$. Letting $n\to\infty$ in the weak formulation of \eqref{w_neq}, we obtain that $w$ is a weak solution to
\begin{align*}
-\Delta w = Q_\Omega\Lambda_1(\Omega) w \quad \text{ in } \rn,\qquad 
w> 0\quad \text{ in } \rn.
\end{align*}
Then $w=\phi_1$. Now, observe that
\begin{align*}
0
&=\int_{\rn}(-\Delta)v_n \hat v_n - (-\Delta)\hat v_n v_n
=\int_{\rn}Q_\Omega \alpha_n v_n \hat v_n (v_n^{p_n-2} - \hat v_n^{p_n-2})\\
&=\int_{\rn}Q_\Omega \alpha_n v_n \hat v_n\left( \int_0^1 ( s v_n+(1-s)\hat v_n)^{p_n-3}\d s\right)(p_n-2) (v_n-\hat v_n).
\end{align*}
This implies that 
\begin{align*}
0
&=\int_{\rn}Q_\Omega v_n \hat v_n w_n d_n,\qquad\text{where \ } d_n:=\int_0^1 (s v_n+(1-s)\hat v_n)^{p_n-3}\d s.
\end{align*}
By \eqref{vs}, $s v_n+(1-s)\hat v_n \to c\phi_1$ pointwisely in $\rn.$ Moreover, by Lemma~\ref{decayzn:lem}, \eqref{vs2}, and \eqref{Ms},
\begin{align*}
|Q_\Omega v_n \hat v_n w_n d_n|
&\leq 
v_n \hat v_n \min\{v_n^{p_n-3},\hat v_n^{p_n-3}\}
=\min\{v_n^{p_n-2}\hat v_n, v_n \hat v_n^{p_n-2}\}\\
&\leq \max\{|v_n|_\infty^{p_n-2},|\hat v_n|_\infty^{p_n-2}\} \min\left\{|\hat v_n|_\infty \frac{\hat v_n}{|\hat v_n|_\infty},
|v_n|_\infty \frac{v_n}{|v_n|_\infty}\right\}\\
&\leq C \min\left\{\frac{\hat u_n}{|u_n|_\infty},
\frac{u_n}{|u_n|_\infty}\right\}
\leq C\min\{1,|x|^{-\frac{2}{q-2}}\}
\end{align*}
for some $C>0$, $n$ large enough, and $q>2$ so that $x\mapsto |x|^{-\frac{2}{q-2}}\in L^1(\rn\backslash B_1(0))$. Then, by dominated convergence, 
\begin{align*}
0&=\lim_{n\to\infty}\int_{\rn}Q_\Omega v_n \hat v_n w_n d_n
=c'\int_{\rn}Q_\Omega \phi_1^2>0
\end{align*}
for some $c'>0,$ which is a contradiction. 

\end{proof}

\begin{remark}\label{obs:rmk}
In the classical blow-up argument to show uniqueness of positive solutions, as in \cite{lin1994uniqueness}, the idea is to consider two different positive solutions $u_n$ and $\hat u_n$ (not necessarily of least energy type), to define $z_n:=\frac{u_n-\hat u_n}{|u_n-\hat u_n|_\infty}$ and to show that $z_n\to \phi_1$ locally.  This can be done also in the setting of problem \eqref{main}. However, the next steps of the arguments are problematic. One wishes to show that $z_n$ must change sign. This is usually done by contradiction and using integration by parts, but in our case we obtain that 
\begin{align*}
    0=\int_{\rn}(-\Delta)u_n \hat u_n-u_n (-\Delta)\hat u_n=\int_{\rn}Q_\Omega u_n \hat u_n (u_n^{p_n-2} - \hat u_n^{p_n-2}).
\end{align*}
Due to the weight $Q_\Omega$, this does not yield a direct contradiction and one cannot conclude as in the classical case with Dirichlet boundary conditions (and without the coefficient $Q_\Omega$). This is the main obstruction to generalize our uniqueness result to general positive solutions. 
\end{remark}

\section{Sharp decay below and at the Serrin exponent}\label{decay:sec}

Let $N\geq 3$ and let $p_s:=\frac{2N-2}{N-2}$ denote the Serrin exponent. Next we show Proposition~\ref{decayz}.

\begin{proof}[Proof of Proposition~\ref{decayz}]
\underline{Case $p\in(2,p_s)$:} Let $v(x)=C_p|x|^{-\frac{2}{p-2}}$ with $C_p=\left(\frac{4 p-2N(p-2)-4}
{(p-2)^2}\right)^{\frac{1}{p-2}}$. Direct calculations  show that $v$ is a classical solution to $-\Delta v = -v^{p-1}$ in $\r^N\backslash\{0\}.$ Let $z:=w-Cv$, with $C\in(0,1)$ such that $z>0$ on $\partial \Omega.$ Note that, 
\begin{align*}
-\Delta z 
= -w^{p-1}+Cv^{p-1}+((Cv)^{p-1}-(Cv)^{p-1})
= -w^{p-1}+(Cv)^{p-1}+(1-C^{p-2})Cv^{p-1}\quad \text{for $x\in \r^N\backslash \Omega$}.
\end{align*}
Let 
\begin{align}\label{cdef}
c(x):=
\begin{cases}
\frac{w^{p-1}(x) - (Cv(x))^{p-1}}{w(x)-Cv(x)}, &\text{if \ }w(x)\neq Cv(x),\\
0, &\text{if \ }w(x)=Cv(x).
\end{cases}
\end{align}
Then $c\geq 0$, and $z$ is a classical solution to
\begin{align*}
-\Delta z +cz=(1-C^{p-2})Cv^{p-1}\geq 0\quad \text{ in } \r^N\backslash \Omega,\qquad 
z>0\quad \text{ on }\partial \Omega.
\end{align*}
By the maximum principle, $z\geq 0$ in $\r^N\backslash \Omega$.  Indeed, assume by contradiction that $x_0\in \r^N\backslash \Omega$ is such $z(x_0)<0$ (note that $\lim_{|x|\to \infty}z(x)=0$). Then $-\Delta z(x_0)\leq 0$, $c(x_0)z(x_0)<0$ and, therefore,
\begin{align*}
0>-\Delta z(x_0) +c(x_0)z(x_0)\geq 0,
\end{align*}
which would yield a contradiction. Then, since $\partial\Omega$ is a compact set, by the definition of $z$, there exists $C'>0$ such that $w(x)\geq C'|x|^{-\frac{2}{p-2}}$ for all $|x|>1$.   

Now we obtain the bounds from above. Using the same notation, if $C>1$ is such that $z<0$ on $\partial \Omega.$ Then,
\begin{align*}
-\Delta z +cz=(1-C^{p-2})Cv^{p-1}\leq 0\quad \text{ in } \r^N\backslash \Omega,\qquad 
z<0\quad \text{ on }\partial \Omega.
\end{align*}
Arguing as before, $z\leq 0$ in $\r^N\backslash \Omega$, namely, there exists $C'>0$ such that $w(x)\leq C'|x|^{-\frac{2}{p-2}}$ for $|x|>1.$ This yields \eqref{bds}. 

\underline{Case $p=p_s$:}  This case can be argued similarly using a different comparison function. Let 
\begin{align*}
v(x)=k_N\left(|x|\sqrt{\ln(|x|)}\right)^{2-N},\qquad     k_N:=2^{2-N} \left(3 N^2-10 N+8\right)^{\frac{N-2}{2}}.
\end{align*}
Then, by direct calculations, 
\begin{align*}
 -\Delta v+v^{\frac{N}{N-2}}=c_N \frac{\ln (r)-1}{r^{N}(\ln(r))^{\frac{N+2}{2}}}>0\quad \text{ in }\r^N\backslash \overline{B_1(0)}
 \end{align*}
 with $c_N:=2^{-N} (N-2)^{N/2} N (3 N-4)^{\frac{N-2}{2}}.$ Let $z:=w-Cv$, with $C>1$ such that $z<0$ on $\partial B_R(0)$ with $R>1$ such that $\Omega\subset B_R(0).$ Then, 
\begin{align*}
-\Delta z 
< -w^{\frac{N}{N-2}}+Cv^{\frac{N}{N-2}}+((Cv)^{\frac{N}{N-2}}-(Cv)^{\frac{N}{N-2}})
= -w^{\frac{N}{N-2}}+(Cv)^{\frac{N}{N-2}}+(1-C^{\frac{2}{N-2}})Cv^{\frac{N}{N-2}}\quad \text{for $x\in \r^N\backslash B_R(0)$}.
\end{align*}
Let $c$ be as in \eqref{cdef}. Then $c\geq 0$ and $z$ is a classical solution to
\begin{align*}
-\Delta z +cz< (1-C^{\frac{2}{N-2}})Cv^{\frac{N}{N-2}}< 0\quad \text{ in } \r^N\backslash B_R(0),\qquad 
z<0\quad \text{ on }\partial B_R(0).
\end{align*}
As before, by the maximum principle, $z\leq 0$ in $\r^N\backslash B_R(0)$. Since $\partial\Omega$ is a compact set, by the definition of $z$, there exists $C'>0$ such that $w(x)\leq C'\left(|x|\sqrt{\ln(|x|)}\right)^{2-N}$ for all $|x|>1$.

Finally, we argue the bound from below in \eqref{bds3}. For this, we must adjust the constants. Let 
\begin{align*}
V(x)=K_N\left(|x|\sqrt{\ln(|x|)}\right)^{2-N},\qquad     K_N:=2^{\frac{2-N}{2}}(N-2)^{N-2}.
\end{align*}
Then, by direct calculations, 
\begin{align*}
 -\Delta V+V^{\frac{N}{N-2}}=C_N \frac{-1}{r^{N}(\ln(r))^{\frac{N+2}{2}}}<0\quad \text{ in }\r^N\backslash \overline{B_1(0)}
 \end{align*}
with $C_N:=2^{-\frac{N+2}{2}} (N-2)^{N-1} N>0$.  Let $Z:=w-\eps V$, with $\eps\in(0,1)$ such that $Z>0$ on $\partial B_R(0)$ with $R>1$ such that $\Omega\subset B_R(0).$ Then, 
\begin{align*}
-\Delta Z>-w^{\frac{N}{N-2}}+ \eps V^{\frac{N}{N-2}}+((\eps V)^{\frac{N}{N-2}}-(\eps V)^{\frac{N}{N-2}})=-w^{\frac{N}{N-2}}+(\eps V)^{\frac{N}{N-2}}+(1-\eps^{\frac{2}{N-2}})\eps V^{\frac{N}{N-2}}\quad \text{for $x\in \r^N\backslash B_R(0)$}.
\end{align*}
Hence,
\begin{align*}
-\Delta Z +cZ\geq (1-\eps^{\frac{2}{N-2}})\eps V^{\frac{N}{N-2}}>0\quad \text{ in } \r^N\backslash B_R(0),\qquad 
Z>0\quad \text{ on }\partial B_R(0).
\end{align*}
Arguing as before, the maximum principle yields that $Z\geq 0$ in $\r^N\backslash B_R(0)$ and, since $w>0$ in $\rn,$ we can find $\eps'>0$ such that $w(x)> \eps'\left(|x|\sqrt{\ln(|x|)}\right)^{2-N}$ for $|x|>1.$ 
\end{proof}

\section{Multiplicity of least-energy solutions}\label{mult:sec}

In this section we show a symmetry breaking and multiplicity result for least energy solutions.  Let us consider the equation
\begin{equation}\label{problema critico}
    \begin{cases}
        -\Delta u=Q_\o|u|^{\ex-2}u,\\
        u\in D^{1,2}(\rn).
    \end{cases}
\end{equation}
This problem is studied in \cite{cfs25}. The solutions of \eqref{problema critico} are the critical points of the functional $J_\ast:D^{1,2}(\rn)\to\r$ given by
\begin{align*}
    J_\ast(u):=\frac{1}{2}\|u\|^2-\frac{1}{\ex}\irn Q_\o(x)|u|^{\ex},\qquad v\in D^{1,2}(\rn).
\end{align*}
The nontrivial critical points of $J_\ast$ belong to the Nehari manifold
\begin{align*}
  \cN_\ast= \Big\lbrace u\in D^{1,2}(\rn)\setminus\{0\}\,:\, \|u\|^2=\irn Q_\o(x)|u|^{\ex}\Big\rbrace. 
\end{align*}

%Not every element of $D(\rn)$ admits a radial projection onto $\cN_\ast$. Those that do belong to the set
%\begin{align*}
%    \cU_\ast=\Big\{u\in D^{1,2}(\rn)\,:\,\irn Q_\o(x)|u|^{\ast}>0\Big\}
%\end{align*}
% By {\color{red}[citar]} we have that $u_p\in L^\infty(\rn)$ and then $u_p^{(\ex-p)+1}\in X^p$. Thus,
% \begin{align*}
%     \irn Q_\o |u_p|^{\ex}&=\irn Q_\o|u_p|^{p-1}u^{\ex-p+1}=(\ex-p+1)\irn u_p^{\ex-p}|\nabla u_p|^2>0.
% \end{align*}

\begin{lemma}\label{proyecciones} For $p\in (2,\ex)$, let $u_p$ be a positive least energy solution of \eqref{main}. There exists $p_0\in(2,2^\ast)$ such that $\irn Q_\o(x)|u_p|^{2^\ast}>0$ for all $p\in(p_0,2^{\ast})$. Let $t_p\in(0,\infty)$ be such that $\tilde{u}_p:=t_pu_p\in\cN_\ast$, i.e.,
\begin{align}\label{tp}
    t_p=\left(\frac{\|u_p\|^2}{\irn Q_\o(x)|u_p|^{\ex}}\right)^{\frac{1}{\ex-2}}=\left(\frac{\irn Q_\o(x) |u_p|^p}{\irn Q_\o(x)|u_p|^{\ex}}\right)^{\frac{1}{\ex-2}}.
\end{align} 
   Then, 
    \begin{align*}
        \lim_{p\to\ex}t_p=1\qquad\text{and}\qquad \lim_{p\to\ex}J_\ast(\tilde{u}_p)=c_\ast:=\inf_{u\in \cN_\ast}J_\ast(u)=\frac{1}{N}S^\frac{N}{2},
    \end{align*}
    where $S$ is the best constant for the Sobolev embedding $D^{1,2}(\rn)\hookrightarrow L^{2^*}(\rn).$
\end{lemma}

\begin{proof}
By \cite[Proof of Proposition 2.3]{cfs25} we have that 
\begin{align}\label{igualdades}
\alpha_{2^*} = 
\inf_{\substack{u\in D^{1,2}(\mathbb{R}^N)\\ \int_{\rn}Q_\Omega|u|^{2^*}>0}}
\frac{\|u\|^2}
{\left( \displaystyle \int_{\rn}Q_\Omega|u|^{2^*} \right)^{2/2^*}}=
\inf_{\substack{u\in D_0^{1,2}(\Omega)\\ u\neq 0}}
\frac{\displaystyle \int_\Omega |\nabla u|^2}
{\left( \displaystyle \int_\Omega |u|^{2^*} \right)^{2/2^*}}
=\inf_{\substack{u\in D^{1,2}(\mathbb{R}^N)\\ u\neq 0}}
\frac{\|u\|^2}
{\left( \displaystyle \int_{\mathbb{R}^N} |u|^{2^*} \right)^{2/2^*}}.
\end{align}
Moreover, for $u\in D^{1,2}(\rn)$,
\begin{align}\label{des}
\left(\int_{\rn}Q_\Omega|u|^{p}\right)^\frac{1}{p}
\leq \left(\int_{\Omega}|u|^{p}\right)^\frac{1}{p}
\leq |\Omega|^\frac{2^*-p}{2^*p}\left(\int_{\Omega}|u|^{2^*}\right)^\frac{1}{2^*}
\leq |\Omega|^\frac{2^*-p}{2^*p}\left(\int_{\rn}|u|^{2^*}\right)^\frac{1}{2^*}.
\end{align}
Hence, 
\begin{align}
\alpha_{2^*}
&=\inf_{\substack{u\in D^{1,2}(\mathbb{R}^N)\\ u\neq 0}}
\frac{\|u\|^2}
{\left( \displaystyle \int_{\mathbb{R}^N} |u|^{2^*} \right)^{2/2^*}}
\leq \inf_{\substack{u\in D^{1,2}(\mathbb{R}^N)\\ \int_{\rn}Q_\Omega|u|^{p}>0}}
\frac{\|u\|^2}
{\left( \displaystyle \int_{\mathbb{R}^N} |u|^{2^*} \right)^{2/2^*}}\notag\\
&\leq |\Omega|^{\frac{2^*-p}{2^*p}}\inf_{\substack{u\in D^{1,2}(\mathbb{R}^N)\\ \int_{\rn}Q_\Omega|u|^{p}>0}}
\frac{\|u\|^2}
{\left( \displaystyle \int_{\rn}Q_\Omega|u|^{p} \right)^{2/p}}
=|\Omega|^{\frac{2^*-p}{2^*p}}\alpha_{p}.\label{igualdades2}
\end{align}
By Lemma~\ref{lem:bounds for alpha}, passing to the limit as $p\to \ex$,  we have that
\begin{align*}
    \alpha_{2^\ast}\leq\liminf_{p\to 2^\ast}\alpha_p\leq\limsup_{p\to 2^{\ast}}\alpha_p\leq a_1<\infty.
\end{align*}

Now, we claim that $\lim\limits_{p\to 2^*}\alpha_p = \alpha_{2^*}.$ Arguing by contradiction, assume that 
\begin{align}\label{chip}
    \alpha_{\ex}+\varepsilon<\limsup\limits_{p\to\ex}\alpha_p\qquad \text{ for some $\varepsilon>0$.}
\end{align} 
 From \eqref{igualdades} and the density of $C_c^\infty(\rn)$ in  $D^{1,2}(\rn)$, there exists $\tilde{u}\in C_c^\infty(\rn)$ such that
 \begin{align}\label{c1}
     \frac{\|\tilde{u}\|^2}{\left(\irn Q_\o |\tilde{u}|^{\ex}\right)^{2/\ex}}<\alpha_{ \ex}+\frac{\varepsilon}{2}.
 \end{align}
 Since the function $p\mapsto\irn Q_\o |\tilde{u}|^{p}$ is continuous in $(2,2^*]$, there exists $q\in(2,\ex)$ such that
 \begin{align}\label{c2}
     0<\irn Q_\o|\tilde{u}|^p
     \qquad\text{and}\qquad
     \left|\frac{\|\tilde{u}\|^2}{\left(\irn Q_\o |\tilde{u}|^{\ex}\right)^{2/\ex}}-\frac{\|\tilde{u}\|^2}{\left(\irn Q_\o |\tilde{u}|^{p}\right)^{2/p}}\right|<\frac{\varepsilon}{2}\qquad\text{ for all }\,p\in[q,\ex).
 \end{align}
 Then, by \eqref{c1}, \eqref{c2}, and \eqref{chip},
 \begin{align*}
     \alpha_p\leq \frac{\|\tilde{u}\|^2}{\left(\irn Q_\o |\tilde{u}|^{p}\right)^{2/p}}<\alpha_{\ex}+\varepsilon<\limsup_{p\to\ex}\alpha_p\qquad\text{ for all }\,p\in[q,\ex),
 \end{align*}
which yields a contradiction. Hence, 
\begin{align}\label{clim}
    \alpha_{\ex}=\lim_{p\to\ex}\alpha_p.
\end{align}

Let $u_p$ be a positive minimizer of \eqref{main}. It is easy to see that 
\begin{align}\label{n}
\frac{\|u_p\|^2}{\left(\irn Q_\o(x) |u_p|^{p}\right)^{2/p}}=\alpha_p.
\end{align}
Then, by \eqref{des},
\begin{align*}
    \alpha_{\ex}\leq\frac{\|u_p\|^2}{\left(\irn |u_p|^{\ex}\right)^{2/\ex}}\leq |\o|^{\frac{2(\ex-p)}{2*p}}\frac{\|u_p\|^2}{\left(\irn Q_\o(x) |u_p|^{p}\right)^{2/p}}= |\o|^{\frac{2(\ex-p)}{2*p}}\alpha_p.
\end{align*}
Passing to the limit as $p\to 2^*$ and using  \eqref{clim}, 
\begin{align}\label{concentracion}
    \lim_{p\to\ex}\irn|u_p|^{\ex}=\lim_{p\to\ex}\irn Q_\o|u_p|^{p}.
\end{align}
From \eqref{des}, \eqref{concentracion}, and \eqref{n} (using that $\|u_p\|^2=\irn Q_\o|u_p|^{p}$),
\begin{align}\label{limites}
  \lim_{p\to\ex}\io|u_p|^p=\lim_{p\to\ex}\io|u_p|^{\ex}=  \lim_{p\to\ex}\irn|u_p|^{\ex}=\lim_{p\to\ex}\irn Q_\o|u_p|^{p}=\lim_{p\to\ex}\alpha_p^{\frac{p}{p-2}}=\alpha_{\ex}^{\frac{N}{2}}.
\end{align}
This implies that $\lim_{p\to\ex}\int_{\rn\setminus\o}|u_p|^{\ex}=0$ and 
\begin{align}\label{qp}
  \lim_{p\to\ex}\irn Q_\o|u_p|^{\ex}=\alpha_{\ex}^{\frac{N}{2}}.  
\end{align}
Therefore, $\irn Q_\o|u_p|^{\ex}>0$ for $p$ close enough to $\ex$. The natural projection of $u_p$ to the Nehari manifold $\cN_{\ex}$ is given by $\widehat{u}_p=t_pu_p$ where $t_p$ is defined in \eqref{tp}. By \eqref{limites}
and \eqref{qp}, $\lim\limits_{p\to\ex}t_p=1$.

Finally, note that
\begin{align*}
    J_{\ast}(\widehat{u}_p)=\frac{\ex-2}{\ex 2}\|\widehat{u}_p\|^2=\frac{1}{N}t_p^2\|u_p\|^2=\frac{1}{N}t_p^2\alpha_{p}^{\frac{p}{p-2}}.
\end{align*}
Hence, $\lim_{p\to\ex}J_{\ast}(\widehat{u}_p)=\frac{1}{N}\alpha_{\ex}^{\frac{N}{2}}=c_{\ast},$ as claimed.  That $c_\ast=\frac{1}{N}S^\frac{N}{2}$ follows from \cite[Proposition 2.3 (c)]{cfs25}. 
\end{proof}

\begin{corollary}\label{casi burbuja}
    Let $p_k\in(2,\ex)$ be such that $p_k\to\ex$ and let $u_{p_k}$ be a positive minimizer to \eqref{main} with $p=p_k$. Then, after passing to a subsequence, there exist $\varepsilon_{k}>0$ and $\xi_k\in\o$ such that
    \begin{align*}
        \lim_{k\to\infty}\varepsilon_k^{-1}\text{dist}(\xi_k,\partial\o)=\infty\qquad\text{and}\qquad\lim_{k\to\infty}\|u_k-U_{\varepsilon_k,\xi_k}\|=0,
    \end{align*}
    where 
    \begin{align*}
    U_{\varepsilon_k,\xi_k}(x):=(N(N-2))^{\frac{N-2}{4}}\frac{\varepsilon_k^{\frac{N-2}{2}}}{\left(\varepsilon_k^2+|x-\xi_k|^{2}\right)^{\frac{N-2}{2}}}.
\end{align*}
\end{corollary}
\begin{proof}
Let $u_{p_k}$ be as in the statement and let $\widehat{u}_{p_k}=t_{p_k}u_{p_k}\in\cN_\ast$ with $t_{p_k}$ as in Lemma \ref{proyecciones}.   By \cite[Corollary 4.4]{cfs25} and Lemma~\ref{proyecciones}, there exist $\varepsilon>0$ and $\xi_k\in\o$ such that $\lim \limits_{k\to\infty}\varepsilon_k^{-1}\operatorname{dist}(\xi_k,\partial\o)=\infty$ and $\lim\limits_{k\to\infty}\|\widehat{u}_{p_k}-U_{\varepsilon_k,\xi_k}\|=0.$ Hence,
    \begin{align*}
        \|u_{p_k}-U_{\varepsilon_k,\xi_k}\|
        \leq\frac{1}{t_{p_k}}\Big(\|\tilde{u}_{p_k}-U_{\varepsilon_k,\xi_k}\|+|1-t_{p_k}|\|U_{\varepsilon_k,\xi_k}\|\Big)=o(1)\qquad \text{ as }k\to\infty,
    \end{align*}
    and the statement follows.
\end{proof}

We now use Corollary \ref{casi burbuja} to show the symmetry breaking result for least energy solutions of \eqref{main} for $p$ close to $\ex$ in annuli stated in the Introduction.  As a consequence, one can use rotations of this solution to obtain \emph{infinitely many} different least energy solutions of \eqref{main}. This contrasts with the case of $p$ close to 2, where, by Theorem \ref{exit p small}, the least energy solution is unique (and therefore radially symmetric whenever $\Omega$ is invariant under rotations).

\begin{proof}[Proof of Theorem \ref{thm:mult}]
Let $p_k\in(2,\ex)$ be such that $p_k\to\ex$ and let $u_{p_k}$ be a positive least energy solution of \eqref{main} with $p=p_k$. By Corollary~\ref{casi burbuja}, there are sequences $(\varepsilon_{k})\subset (0,\infty)$ and $(\xi_k)\subset \o$ such that, passing to a subsequence, 
\begin{align*}
u_{p_k}=U_{\varepsilon_k,\xi_k}+o(1)\quad \text{ in $D^{1,2}(\rn)$},
\qquad 
\eps_k\to 0,\qquad 
\xi_k\to \xi_0\in \overline{\Omega},
\end{align*}
  as $k\to \infty$. The result now follows since $U_{\varepsilon_k,\xi_k}$ is not invariant under rotations with respect to the origin.  The foliated Schwarz symmetry follows from \cite[Theorem 1.4]{CHS25}.
\end{proof}

\begin{remark}\label{rmk:table}

To close this paper, we include a table that highlights some differences among positive solutions of \eqref{main} as the exponent \(p\) varies.  

\medskip
\begin{center}
\renewcommand{\arraystretch}{1.3}
\begin{tabular}{|c|c|c|}
\hline
$\mathbf{p}$ & \textbf{Uniqueness vs multiplicity} & \textbf{Decay} \\
\hline
$p<2^*$ close to $2^*$ & Multiplicity of least energy solutions in some domains & $|u(x)|\leq C|x|^{2-N}$\\
\hline
$p>2$ close to $2$ & Uniqueness of least energy solutions & $|u(x)|\leq C|x|^{\frac{2}{2-p}}$\\
\hline
$p=2$ & First eigenfunction is unique up to a multiplicative factor & $|\phi_1(x)|\leq C |x|^{-\frac{N-1}{2}}e^{-\sqrt{\Lambda_1}|x|}$\\
\hline
\multirow{3}{*}{$p\in(1,2)$} & Uniqueness of least energy solutions & \multirow{3}{*}{$u$ has compact support} \\
\cline{2-2}
 & Uniqueness of positive solutions if $\Omega$ is a connected set & \\
\cline{2-2}
 & Uniqueness of positive solutions for $p\in(q_0(\Omega),2)$& \\
\cline{2-2}
 & Multiplicity of positive solutions in some disconnected domains & \\
\hline
\end{tabular}

\end{center}

\medskip

Here $q_0(\Omega)\in (1,2)$ is some exponent which depends on the geometry of $\Omega.$ Some of these results are proved in this paper, while others can be found in \cite{CHS25,CSS26}.

\end{remark}

\subsection*{Acknowledgments}
We thank Bendetta Pellaci for helpful discussions. We thank the anonymous referee for helpful comments and suggestions.  

\bibliographystyle{plain}

\bigskip

\begin{flushleft}
\textbf{Mónica Clapp}\\
Instituto de Matemáticas\\
Universidad Nacional Autónoma de México \\
Campus Juriquilla\\
Boulevard Juriquilla 3001\\
76230 Querétaro, Qro., Mexico\\
\texttt{monica.clapp@im.unam.mx}
\medskip

\textbf{Cristian Morales-Encinos} and \textbf{Alberto Saldaña}\\
Instituto de Matemáticas\\
Universidad Nacional Autónoma de México \\
Circuito Exterior, Ciudad Universitaria\\
04510 Coyoacán, Ciudad de México, Mexico\\
\texttt{cristianmorales@ciencias.unam.mx} \\
\texttt{alberto.saldana@im.unam.mx}
\medskip

\textbf{Mayra Soares}\\
Departamento de Matemática\\
Universidade de Brasília\\
Campus Darci Ribeiro\\
Asa Norte, Brasília\\ 70910-900, Brazil\\
\texttt{mayra.soares@unb.br}
\end{flushleft}

\end{document}